\theoremstyle{plain}
\newtheorem{theorem}{Theorem}[section]
\newtheorem{thm}[theorem]{Theorem}
\newtheorem{cor}[theorem]{Corollary}
\newtheorem{prop}[theorem]{Proposition}
\newtheorem{lem}[theorem]{Lemma}
\newtheorem{defi}[theorem]{Definition}
\theoremstyle{definition}
\newcommand{\Glie}{\mathfrak{g}}
\newcommand{\Yim}{\mathcal{Y}}
\newcommand{\U}{\mathcal{U}}
\newcommand{\ZZ}{\mathbb{Z}}
\newcommand{\CC}{\mathbb{C}}
\newcommand{\cN}{\mathcal{N}}
\newcommand{\C}{\mathbb{C}}
\newcommand{\Z}{\mathbb{Z}}
\newcommand{\g}{\mathfrak{g}}
\newcommand{\bo}{\mathfrak{b}}
\newcommand{\tb}{\mathbf{\mathfrak{t}}}
\newcommand{\ga}{\overline{\alpha}}
\newcommand{\gb}{\dot{\mathfrak{g}}}
\newcommand{\Utg}{\widetilde{U}_q(\mathfrak{g})}
\newcommand{\Psib}{\mbox{\boldmath$\Psi$}}
\newcommand{\Psibs}{\scalebox{.7}{\boldmath$\Psi$}}
\newcommand{\qbin}[2]{{\left[
\begin{matrix}{\,\displaystyle #1\,}\\
{\,\displaystyle #2\,}\end{matrix}
\right]
}}
\newtheorem{rem}[theorem]{Remark}
\begin{document}

\begin{title}[Asymptotic representations]
{Asymptotic representations \\and Drinfeld rational fractions}
\end{title}
\author{David Hernandez and Michio Jimbo}
\address{DH: Institut de Math{\'e}matiques de Jussieu,
Universit{\'e} Paris Diderot (Paris VII), 175 rue du Chevaleret 75013 Paris, France}
\email{hernandez@math.jussieu.fr}
\address{MJ: Department of Mathematics, 
Rikkyo University, Toshima-ku, Tokyo 171-8501, Japan}
\email{jimbomm@rikkyo.ac.jp}

\begin{abstract}
We introduce and study a category of representations of 
the Borel algebra, associated with a quantum loop algebra of non-twisted type.
We construct fundamental representations for this category as a limit
of the Kirillov-Reshetikhin modules over the quantum loop algebra 
and we establish explicit formulas for their characters. 
We prove that general simple modules in this category 
are classified by $n$-tuples 
of rational functions in one variable, which are regular and non-zero at the origin but 
may have a zero or a pole at infinity.

\vskip 4.5mm

\noindent {\bf 2010 Mathematics Subject Classification:} 17B37 (17B10,
81R50).

\noindent {\bf Keywords:} Quantum affine algebras, Category $\mathcal{O}$, Kirillov-Reshetikhin modules, Borel algebra,
asymptotic representation theory.

\end{abstract}

\maketitle

%Notation 
%  V_\omega   classical weight space
%  V_\Psib    \ell-weight space
%  L(\Psib),L(M)  simple highest weight module
%  \Psib=(\Psi_i)_{i\in I}  \ell-weight
%  M^{(i)}_{k,a}   hwt of KR module

\section{Introduction}\label{Intro}

The theory of finite dimensional representations of quantum loop algebras
is a rich subject, developed already by many authors. 
It has many applications and connections to various branches of mathematics, 
including representation theory, algebraic geometry, combinatorics and 
classical/quantum integrable systems. 
For a recent review the reader is referred 
to \cite{ch, le}.

Let us briefly recall the classification of 
irreducible finite dimensional representations. 
Let $U_q(\mathfrak{g})$ be a quantum loop algebra of non-twisted type, 
and let $x^{\pm}_i(z)$, $\phi^{\pm}_i(z)$ ($1\le i\le n$) be the 
Drinfeld currents (for the notation, see Section \ref{bck} below).
Then any irreducible (type 1) finite dimensional 
$U_q(\mathfrak{g})$-module $V$ is 
presented as $V=U_q(\mathfrak{g})\, v$, with $v$ a non-zero vector satisfying
\begin{align*}
x^+_i(z)\,v=0,\quad \phi^\pm_i(z)\,v=\Psi_i(z)\,v\quad (i=1,\cdots,n)\,.
\end{align*}
The eigenvalues of $\phi^\pm_i(z)$ are the power series expansions in 
$z^{\pm 1}$ of a rational function $\Psi_i(z)$ of a specific type, 
\begin{align*}
\Psi_i(z)=q_i^{\mathrm{deg}\,P_i}\frac{P_i(q_i^{-1}z)}{P_i(q_iz)}\,,
\end{align*}
where $P_i(z)\in \C[z]$ is a  polynomial 
such that $P_i(0)=1$.
These are called Drinfeld polynomials. 
The $n$-tuple $\Psib=\bigl(\Psi_i\bigr)_{i=1,\cdots,n}$ 
is called the highest $\ell$-weight of $V$. 
Note that each $\Psi_i(z)$ is regular and non-zero at $z=0,\infty$.
The correspondence between $V$ and 
$\bigl(P_i(z)\bigr)_{1\le i\le n}$ is bijective 
\cite{Cha1, Cha2}. 

A particularly well-studied case is the Kirillov-Reshetikhin (KR) 
module defined by 
\begin{align*}
P_i(z)=\prod_{l=1}^k(1-a q_i^{k-2l+1}z)\,,
\quad P_j(z)=1\ (j\neq i)\,
\end{align*}
for some $i=1,\cdots,n$, $k\in\Z_{\ge0}$ and $a\in\C^*$. 
In this article we denote it by $L(M^{(i)}_{k,a})$.
The KR modules have various nice properties. Among others, 
it is known \cite{n, hcr} that as $k\to\infty$ 
the normalized $q$-character 
$\tilde{\chi}_q\bigl(L(M^{(i)}_{k,q_i^{-2k+1}})\bigr)$
\cite{Fre}
has a well-defined limit as a formal power series. 

In view of these results, one is naturally led to ask the following
questions:
\begin{enumerate}
\item In the description of highest $\ell$-weight modules, 
what will happen if $\Psi_i(z)$'s are more general rational fractions? 
\item What is the representation theoretical content of 
$\lim_{k\to\infty}\tilde{\chi}_q\bigl(L(M^{(i)}_{k,q_i^{-2k+1}})\bigr)$?
\end{enumerate}

Motivated by these questions, we introduce and study in this article 
a certain category $\mathcal{O}$ of $U_q(\mathfrak{b})$-modules,  
where $U_q(\mathfrak{b})$ is the Borel subalgebra of $U_q(\mathfrak{g})$. 
Basic notions for $U_q(\mathfrak{g})$ such as highest $\ell$-weight 
carry over in a straightforward manner to $U_q(\mathfrak{b})$ as well.  
We prove that a simple highest $\ell$-weight $U_q(\mathfrak{b})$-module belongs to category $\mathcal{O}$ 
if and only if its highest $\ell$-weight $\Psib=(\Psi_i(z))_{1\le i\le n}$ consists of rational functions 
which are regular and non-zero at $z=0$. We call them the Drinfeld rational fractions.  
Since no condition at $z=\infty$ is required, 
this gives an answer to question (i).
These simple modules are infinite dimensional in general,
but we prove that their weight spaces are finite dimensional.

Among simple highest $\ell$-weight modules,  
of particular significance 
are the two kinds of fundamental modules $L^\pm_{i,a}$ 
($1\leq i\leq n$, $a\in\CC^*$)
defined by the highest $\ell$-weight
\begin{align*}
\Psi_i(z)=(1-az)^{\pm 1}\,,
\quad \Psi_j(z)=1\ (j\neq i)\,.
\end{align*}
General simple highest $\ell$-weight modules are subquotients of tensor products 
of $L^\pm_{i,a}$'s. 
For the proof of the classification mentioned above, 
the key point lies in 
showing that $L^\pm_{i,a}$'s belong to category $\mathcal{O}$. 
We do this by constructing the module $L^-_{i,a}$
as the limit $k\to\infty$ of KR modules 
$M^{(i)}_{i,q_i^{-2k+1}a}$ viewed as $U_q(\mathfrak{b})$-modules 
(with an appropriate shift of grading). 
The module $L^+_{i,a}$ are then constructed 
either as a similar limit of $M^{(i)}_{i,q_i^{2k-1}a}$ or using 
a duality argument.
In particular the limit of the normalized $q$-characters 
of $M^{(i)}_{i,q_i^{-2k+1}a}$ is the $q$-character of 
the $U_q(\mathfrak{b})$-module $L^-_{i,1}$. 
This gives an answer to question (ii).   
These are the main results of the present paper.

Actually we prove that the module  $L^-_{i,a}$  admits an action of 
a `larger' algebra $\widetilde{U}_q(\mathfrak{g})$ 
which we call asymptotic algebra. 
It is defined in the same way as the quantum loop algebra 
$U_q(\mathfrak{g})$ wherein invertibility of the $\phi^-_i(0)$'s
is not assumed. 
Asymptotic algebra does not contain 
$U_q(\mathfrak{b})$ as a subalgebra, but it
is `larger' in the sense that 
$Q$-graded $\widetilde{U}_q(\mathfrak{g})$-modules 
($Q$ denoting the root lattice of the classical part of 
$\mathfrak{g}$) can be regarded uniquely 
as $U_q(\mathfrak{b})$-modules.
See Proposition \ref{asym} for the precise statement. 
In contrast, the action of $U_q(\mathfrak{b})$ on 
$L^+_{i,a}$ cannot be extended to that of the 
full asymptotic algebra $\widetilde{U}_q(\mathfrak{g})$. 

Since explicit character formulas for KR modules are known 
\cite{n,hcr}, 
our asymptotic construction of the fundamental representations 
$L^\pm_{i,a}$ implies
explicit formulas for their characters $\chi(L^\pm_{i,a})$. 
Moreover, our results
imply that $\chi(L_{i,a}^+) = \chi(L_{i,b}^-)$ for any 
$i=1,\cdots,n$ and $a,b\in\CC^*$.

Some particular cases of fundamental
representations $L^\pm_{i,a}$ of the Borel algebra
have been discussed
in the literature 
under the name of `$q$-oscillator representations', 
see e.g.
\cite{BLZ}, Appendix D for $\widehat{\mathfrak{sl}}_2$, 
\cite{BHK}, Appendix B for $\widehat{\mathfrak{sl}}_3$, 
\cite{BTs}, Section 5  for $\widehat{\mathfrak{sl}}(2|1)$,
and  
\cite{Ko} for $L_{1,a}^\pm$ of $\widehat{\mathfrak{sl}}_n$. 
The corresponding $q$-characters are intimately connected with 
Baxter's $Q$-operators important in quantum integrable systems. 
Giving a systematic account to this construction
has been another motivation for our study. 

As mentioned above, the theory of finite dimensional 
representations of quantum affine
algebras has been intensively studied. 
We can expect similar developments for 
the category $\mathcal{O}$ of the Borel algebra. 
For example, it would be very interesting to find 
the defining relations for its Grothendieck ring. 

The standard asymptotic representation theory \cite{v} involves limits
with respect to the rank of a Lie algebra or a group. 
In the present paper, the limits 
are taken with respect to the level of representations. 
Here, by `level' we mean that of 
representations of the quantum group associated 
to the underlying finite-dimensional Lie algebra.
The relation between these two 
kinds of asymptotic representation theory should be understood 
in the spirit of the
level/rank duality of \cite{f}.

Another direction is to understand the connection of our work
to results about Demazure modules for classical current algebras, 
such as in \cite{fl}.

We hope to return to these issues in the near future.

The text is organized as follows.

In section \ref{bck} we set up the notation concerning $U_q(\mathfrak{g})$, 
introduce the asymptotic algebra  $\widetilde{U}_q(\mathfrak{g})$
and discuss the connection between representations of 
$\widetilde{U}_q(\mathfrak{g})$ and that of 
the Borel algebra $U_q(\mathfrak{b})$.  
In section \ref{catO} we discuss the basics of 
highest $\ell$-weight modules of $U_q(\mathfrak{b})$.  
We introduce the category $\mathcal{O}$,  
and state the classification of simple modules
in Theorem \ref{class}, whose proof  
will be given in the following sections. 
We also discuss the $q$-characters 
and summarize some facts about 
finite dimensional representations of $U_q(\mathfrak{g})$,  
including the results 
from \cite{h3} which will be used in subsequent sections. 
We analyze analogous properties for a category $\mathcal{O}^*$ 
dual to $\mathcal{O}$.
In section \ref{asymptminus} we show that the family of 
KR modules $\{M^{(i)}_{k,q_i^{-2k+1}}\}_{k\ge0}$  
has a well defined limit $V_\infty$, and that it has 
the structure of a module over the asymptotic algebra
$\widetilde{U}_q(\mathfrak{g})$. 
As a consequence, the simple module $L^-_{i,1}$ is shown to belong to
category $\mathcal{O}$. 

The next section concerns the simple module  $L^+_{i,1}$.  
In section \ref{asymptplus} we consider a dual
module $(V_\infty^\sigma)^*$ of the module $V_\infty$ constructed
in the previous section, over the algebra 
$\widetilde{U}_{q^{-1}}(\mathfrak{g})$. From this we conclude 
that  $L^+_{i,1}$ is in category $\mathcal{O}$.
By using this result, we prove in section \ref{irred} that 
$V_\infty$ is irreducible, so that it coincides with 
the simple module $L^-_{i,1}$. 
By using duality, we also prove that
$(V_\infty^\sigma)^*$ is irreducible isomorphic to $L_{i,1}^+$.
In particular, we get an explicit character formula for $L^\pm_{i,a}$.
In the last section \ref{asymptpplus} we discuss 
the asymptotic construction of $L^+_{i,1}$.

\section{Quantum loop algebra and related algebras}\label{bck}

In this section,  
we introduce our notation concerning the quantum loop algebra 
and related algebras.

\subsection{Quantum loop algebra}\label{debut}

Let $C=(C_{i,j})_{0\leq i,j\leq n}$ be an indecomposable Cartan matrix 
of non-twisted affine type. 
We denote by $\g$ the Kac-Moody Lie algebra associated with $C$. 
Set $I=\{1,\ldots, n\}$, and 
denote by $\gb$ the finite-dimensional 
simple Lie algebra associated with the Cartan matrix $(C_{i,j})_{i,j\in I}$. 
Let $\{\alpha_i\}_{i\in I}$, $\{\alpha_i^\vee\}_{i\in I}$,
$\{\omega_i\}_{i\in I}$ 
be the simple roots, the simple coroots and the fundamental weights of $\gb$, 
respectively.
We set $Q=\oplus_{i\in I}\Z\alpha_i$, $Q^+=\oplus_{i\in I}\Z_{\ge0}\alpha_i$.
Let $D=\mathrm{diag}(d_0\ldots, d_n)$ be the unique diagonal matrix such that 
$B=DC$ is symmetric and $d_i$'s are relatively prime positive integers.
We denote by $(~,~):Q\times Q\to\Z$ the invariant symmetric bilinear 
form such that 
$(\alpha_i,\alpha_i)=2d_i$. Let $a_0,\cdots,a_n$ stand for the Kac 
label (\cite{ka}, pp.55-56).

Throughout this paper, we fix  a non-zero complex number $q$ which is not a root of unity. 
We set $q_i=q^{d_i}$. 

The quantum loop algebra $U_q(\g)$ is the $\C$-algebra defined by generators 
$e_i,\ f_i,\ k_i^{\pm1}$ ($0\le i\le n$) 
and the following relations for $0\le i,j\le n$.
\begin{align*}
&k_ik_j=k_jk_i,\quad k_0^{a_0}k_1^{a_1}\cdots k_n^{a_n}=1,\\
&k_ie_jk_i^{-1}=q_i^{C_{i,j}}e_j,\quad k_if_jk_i^{-1}=q_i^{-C_{i,j}}f_j,
\\
&[e_i,f_j]
=\delta_{i,j}\frac{k_i-k_i^{-1}}{q_i-q_i^{-1}},
\\
&\sum_{r=0}^{1-C_{i.j}}(-1)^re_i^{(1-C_{i,j}-r)}e_j e_i^{(r)}=0\quad (i\neq j),
&\sum_{r=0}^{1-C_{i.j}}(-1)^rf_i^{(1-C_{i,j}-r)}f_j f_i^{(r)}=0\quad (i\neq j)\,.
\end{align*}
Here we have set $x_i^{(r)}=x_i^r/[r]_{q_i}!$ ($x_i=e_i,f_i$), 
and used the standard symbols for $q$-integers 
\begin{align*}
[m]_z=\frac{z^m-z^{-m}}{z-z^{-1}}, \quad
[m]_z!=\prod_{j=1}^m[j]_z,
 \quad 
\qbin{s}{r}_z
=\frac{[s]_z!}{[r]_z![s-r]_z!}. 
\end{align*}
The algebra $U_q(\g)$ has a Hopf algebra structure. We choose the coproduct and the antipode given by
\begin{align*}
&\Delta(e_i)=e_i\otimes 1+k_i\otimes e_i,\quad
\Delta(f_i)=f_i\otimes k_i^{-1}+1\otimes f_i,
\quad
\Delta(k_i)=k_i\otimes k_i\,,
\\
&
S(e_i) = -k_i^{-1} e_i ,\quad S(f_i) = -f_i k_i,
\quad
S(k_i)=k_i^{-1}\,,
\end{align*}
where $i=0,\cdots,n$. 
In particular,  
\begin{align*}
S^{-1}(e_i) = - e_ik_i^{-1} ,\quad
S^{-1}(f_i) = - k_i f_i,
\quad
S^{-1}(k_i)=k_i^{-1}\,.
\end{align*}

\subsection{Drinfeld generators and asymptotic algebras}\label{asy}

The algebra $U_q(\g)$ has another presentation in terms of the Drinfeld generators. 
For our purposes it is convenient to introduce them in the following manner. 

Let $\Utg$ be the $\C$-algebra defined by generators 
\begin{align*}
\tilde{x}_{i,r}^{\pm}\ (i\in I, r\in\Z), 
\quad 
\tilde{\phi}_{i,\pm m}^\pm\ (i\in I, m\geq 0),
\quad \kappa_i\ (i\in I) 
\end{align*}
and the following defining relations for all $i,j\in I$, $r,r'\in\Z$, $m,m'\geq 0$: 

\begin{align*}
&\tilde{\phi}_{i,0}^+= 1\,,\tilde{\phi}_{i,0}^-= \kappa_i^2\,,
\quad
[\tilde{\phi}_{i,\pm m}^\pm,\tilde{\phi}_{j,\pm m'}^\pm] = 0,\ 
[\tilde{\phi}_{i,\pm m}^\pm,\tilde{\phi}_{j,\mp m'}^\mp] = 0\,,
\\
&\kappa_i\tilde{x}^\pm_{j,r}=q_i^{\mp C_{i,j}}\tilde{x}^{\pm }_{j,r}\kappa_i,
\\
&
\tilde{\phi}_{i,m}^+\tilde{x}_{j,r}^{\pm}
= \sum_{0\leq l\leq m}
q_i^{\pm l C_{i,j}}\tilde{x}_{j,r+l}^\pm\tilde{\phi}_{i,m-l}^+
- \sum_{0\leq l\leq m-1}q_i^{\pm (l-1) C_{i,j}}\tilde{x}_{j,r+l+1}^\pm
\tilde{\phi}_{i,m-l-1}^+\,,
\\
&
\tilde{\phi}_{i,-m}^- \tilde{x}_{j,r}^{\pm} 
= -\sum_{0\leq l\leq m - 1}
q_i^{\mp (l+1)C_{i,j}}\tilde{x}_{j,r-l-1}^{\pm} \tilde{\phi}_{i,-m+l+1}^-
+ \sum_{0\leq l\leq m}q_i^{\mp (l+2) C_{i,j}} 
\tilde{x}_{j,r-l}^{\pm}\tilde{\phi}_{i,-m+l}^-\,,
\\
&
q_i^{-C_{i,j}}\tilde{x}_{i,r}^+\tilde{x}_{j,r'}^-
-\tilde{x}_{j,r'}^-\tilde{x}_{i,r}^+
= 
\delta_{i,j}\frac{\tilde{\phi}^+_{i,r+r'}-\tilde{\phi}^-_{i,r+r'}}{q_i-q_i^{-1}}\,,
\\
&
\tilde{x}_{i,r+1}^{\pm}\tilde{x}_{j,r'}^{\pm}
-q_i^{\pm C_{i,j}}\tilde{x}_{j,r'}^{\pm}\tilde{x}_{i,r+1}^{\pm}
=q_i^{\pm C_{i,j}}\tilde{x}_{i,r}^{\pm}\tilde{x}_{j,r'+1}^{\pm}
-\tilde{x}_{j,r'+1}^{\pm}\tilde{x}_{i,r}^{\pm}\,,
\\
&
\sum_{\pi\in \Sigma_s}
\sum_{k=0}^{s}
(-1)^k
\qbin{s}{k}_{q_i}
\tilde{x}_{i,r_{\pi(1)}}^{\pm}\cdots 
\tilde{x}_{i,r_{\pi(k)}}^{\pm}
\tilde{x}_{j,r'}^{\pm}
\tilde{x}_{i,r_{\pi(k+1)}}^{\pm}\cdots \tilde{x}_{i,r_{\pi(s)}}^{\pm}
=0\,. 
\end{align*}
In the last relation, $i\neq j$, $s=1-C_{i,j}$,  
$ r_1,\cdots,r_s$ run over all integers, and 
$\Sigma_s$ stands for the symmetric group on $s$ letters.  
We have set also $\tilde{\phi}_{i,\pm m}^\pm = 0$ for $m < 0$.
We call  $\Utg$ the asymptotic algebra. 
Notice that the elements $\kappa_i\in \Utg$ are not assumed to be invertible. 
Indeed, we shall consider later representations of $\Utg$ on which $\kappa_i$ act as $0$. 

We have then 
\begin{prop}\cite{Dri2,bec}
There is an isomorphism of $\C$-algebras 
\begin{align}
U_q(\g)\simeq \Utg\otimes_{\C[\kappa_i]_{i\in I}}\C[\kappa_i,\kappa_i^{-1}]_{i\in I}\,.
\label{BeDr2}
\end{align}
\end{prop}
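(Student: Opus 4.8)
The plan is to establish the stated isomorphism by constructing explicit mutually inverse algebra homomorphisms between the two presentations and checking they are well-defined by verifying that defining relations are preserved.

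First I would recall the classical Beck--Drinfeld isomorphism from \cite{Dri2,bec}, which gives $U_q(\g)\simeq U_q^{\mathrm{Dr}}(\g)$, where $U_q^{\mathrm{Dr}}(\g)$ is the usual Drinfeld presentation in terms of generators $x_{i,r}^\pm$, $\phi_{i,\pm m}^\pm$ and invertible Cartan-type elements. Here the $\phi_{i,0}^{\pm}$ are \emph{invertible} (they play the role of $k_i^{\pm1}$), whereas in $\Utg$ the relation $\tilde\phi_{i,0}^-=\kappa_i^2$ replaces $\phi_{i,0}^-$ by the square of an element $\kappa_i$ that is \emph{not} assumed invertible. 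The key observation is therefore that $\Utg$ is a partial (non-localized) version of the Drinfeld presentation: if one adjoins inverses $\kappa_i^{-1}$, one recovers exactly the invertibility of the zero-mode Cartan elements that is built into $U_q^{\mathrm{Dr}}(\g)$. So I would define the candidate map $\Phi:\Utg\otimes_{\C[\kappa_i]}\C[\kappa_i,\kappa_i^{-1}]\to U_q(\g)$ by sending $\tilde x^{\pm}_{i,r}\mapsto x^{\pm}_{i,r}$, $\tilde\phi^{\pm}_{i,\pm m}\mapsto\phi^{\pm}_{i,\pm m}$, and $\kappa_i\mapsto\phi^{-,1/2}_{i,0}$ in the appropriate normalization matching $\tilde\phi_{i,0}^-=\kappa_i^2$, using the composite with the Beck--Drinfeld isomorphism.

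Next I would verify that $\Phi$ is a well-defined algebra homomorphism. This amounts to checking that every defining relation of $\Utg$ listed above — the commutation relations among the $\tilde\phi$'s, the $\kappa_i$-conjugation relations $\kappa_i\tilde x^{\pm}_{j,r}=q_i^{\mp C_{i,j}}\tilde x^{\pm}_{j,r}\kappa_i$, the two $\tilde\phi$--$\tilde x$ exchange relations, the bracket relation producing $(\tilde\phi^+_{i,r+r'}-\tilde\phi^-_{i,r+r'})/(q_i-q_i^{-1})$, the $q$-commutator relation among the $\tilde x^{\pm}$, and the Drinfeld--Serre relations — goes over to a valid identity in $U_q(\g)$ under the Beck--Drinfeld correspondence, after localizing. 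The only substantive point is that the exchange relations as written are the \emph{power-series-free} (mode-by-mode) forms of the usual generating-function relations $\phi^{\pm}_i(z)x^{\pm}_j(w)=\bigl((\cdots)/(\cdots)\bigr)x^{\pm}_j(w)\phi^{\pm}_i(z)$; one should confirm that extracting coefficients from the standard relations yields precisely the displayed recursions, including the correct powers of $q_i$ and the shift patterns in the $\tilde\phi^-$ relation. Conversely, I would define $\Psi$ in the reverse direction on $U_q^{\mathrm{Dr}}(\g)$ (equipped with $\phi_{i,0}^{\mp}$ invertible) by the evident assignment, noting that the presence of $\kappa_i^{-1}$ in the localized tensor product is exactly what allows $\Psi$ to be defined, and check $\Phi\circ\Psi=\mathrm{id}$, $\Psi\circ\Phi=\mathrm{id}$ on generators.

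The main obstacle I expect is bookkeeping rather than conceptual: one must match the precise normalization convention relating $\kappa_i$, $\tilde\phi_{i,0}^{-}$ and the Cartan element $k_i$ (or $\phi_{i,0}^{\pm}$) in the Beck--Drinfeld presentation, and then confirm that the exact mode expansions of the exchange relations in $\Utg$ coincide with those obtained by expanding the generating-function relations of $U_q^{\mathrm{Dr}}(\g)$ in nonnegative powers of $z^{\mp1}$. In particular the asymmetry between the $\tilde\phi^+$ and $\tilde\phi^-$ relations reflects the opposite expansion directions at $z=0$ and $z=\infty$, and keeping track of the sign and index shifts there is the delicate step. Since the localization $\Utg\otimes_{\C[\kappa_i]}\C[\kappa_i,\kappa_i^{-1}]$ simply inverts the $\kappa_i$ and hence the $\tilde\phi_{i,0}^-$, no new relations are introduced beyond those of $\Utg$, so once the generator-level correspondence and relation-matching are confirmed the isomorphism follows immediately; the content of the proposition is essentially that $\Utg$ is the universal de-localization of the Drinfeld presentation in which the negative zero modes need not be invertible.
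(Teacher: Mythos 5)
Your overall strategy (reduce to the Beck--Drinfeld presentation and write down mutually inverse maps) is the right one --- indeed the paper offers no proof at all, citing \cite{Dri2,bec}, and records only the generator dictionary \eqref{Dri-gen} --- but the dictionary you propose is wrong, and the error is fatal rather than a matter of bookkeeping. You send $\tilde{\phi}^{\pm}_{i,\pm m}\mapsto \phi^{\pm}_{i,\pm m}$ and $\kappa_i\mapsto (\phi^-_{i,0})^{1/2}$. The first assignment is already inconsistent with the defining relation $\tilde{\phi}^+_{i,0}=1$, since $\phi^+_{i,0}=k_i\neq 1$ in $U_q(\g)$; and matching $\tilde{\phi}^-_{i,0}=\kappa_i^2$ under your assignment forces $\kappa_i$ to be a square root of $\phi^-_{i,0}=k_i^{-1}$, an element which does not exist in $U_q(\g)$ (the proposition asserts an isomorphism with $U_q(\g)$ itself, not with an extension by square roots of the $k_i$). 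So your map $\Phi$ is not well defined, and no choice of scalar normalization repairs it: the fix is a twist of the generators, not a rescaling.

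The correct identification, stated in \eqref{Dri-gen}, is $\kappa_i=k_i^{-1}$, $\tilde{x}^+_{i,r}=x^+_{i,r}$, $\tilde{x}^-_{i,r}=\kappa_i\,x^-_{i,r}$, $\tilde{\phi}^{\pm}_{i,\pm m}=\kappa_i\,\phi^{\pm}_{i,\pm m}$; then $\tilde{\phi}^+_{i,0}=k_i^{-1}k_i=1$ and $\tilde{\phi}^-_{i,0}=k_i^{-2}=\kappa_i^2$ hold on the nose, with no square roots. This also undercuts your reading of the relations of $\Utg$ as ``the power-series-free forms of the usual generating-function relations'': they are the $\kappa$-twisted versions of those mode expansions. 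For instance, in the mixed relation the factor $q_i^{-C_{i,j}}$ in front of $\tilde{x}^+_{i,r}\tilde{x}^-_{j,r'}$ arises from commuting $\kappa_j=k_j^{-1}$ past $x^+_{i,r}$ (using $d_jC_{j,i}=d_iC_{i,j}$), not from the untwisted commutator $[x^+_{i,r},x^-_{j,r'}]$; under your untwisted dictionary that factor, and likewise the shift patterns in the $\tilde{\phi}^{\mp}$--$\tilde{x}$ exchange relations, would simply not match. The twist is precisely what makes every relation of $\Utg$ polynomial (rather than Laurent) in the $\kappa_i$, which is the entire point of the definition and what makes the asymptotic limits of Section \ref{asymptminus} converge ($\tilde{x}^-_{i,r}=k_i^{-1}x^-_{i,r}$ has a limit while $x^-_{i,r}$ diverges). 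With the corrected dictionary, the remainder of your plan --- verifying the twisted relations, localizing at the $\kappa_i$, and checking the two composites are the identity on generators --- does go through and is essentially the content of the cited references.
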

The standard Drinfeld generators $x^\pm_{i,r}$, $\phi^\pm_{i,\pm m}$,
$k_i^{\pm1}$ 
are related to those in the right hand side by 
\begin{align}
x^+_{i,r}=\tilde{x}^+_{i,r},\quad 
x^-_{i,r}=\kappa_i^{-1}\tilde{x}^-_{i,r},\quad 
\phi^\pm_{i,\pm m}=\kappa_i^{-1}\tilde{\phi}^\pm_{i,\pm m},\quad 
k_i=\kappa_i^{-1}\,.
\label{Dri-gen}
\end{align}
With this identification, we shall regard  $\Utg$ as a subalgebra of $U_q(\g)$. 

For $\g=\widehat{\mathfrak{sl}}_2$, 
the isomorphism \eqref{BeDr2} is given explicitly by 
\begin{align*}
&e_1\mapsto \tilde{x}_{1,0}^+,\quad f_1\mapsto \kappa_1^{-1}\tilde{x}_{1,0}^-,\quad 
e_0\mapsto \tilde{x}_{1,1}^-,\quad f_0\mapsto \tilde{x}_{1,-1}^+\kappa_1^{-1},
\\
&k_0\mapsto \kappa_1,\quad k_1\mapsto \kappa_1^{-1},
\quad [e_1,k_1e_0]\mapsto \frac{1}{q-q^{-1}} \kappa_1^{-1}\tilde{\phi}^+_{1,1}, 
\end{align*}
that is $k_1e_0 \mapsto x_{1,1}^-$  and  $f_0 k_1^{-1} \mapsto x_{1,-1}^+$. 

In the general case, 
the isomorphism \eqref{BeDr2} takes 
$e_i$ to $x^+_{i,0}$, 
$f_i$ to $x^-_{i,0}$, 
and $k_i$ to $\kappa_i^{-1}$
for $i\in I$. 
The image $y$ of $e_0$  
is described as follows (see e.g. \cite{Cha2}, p.393).  
In the Lie algebra $\gb$, choose simple root vectors 
$\dot{x}_i^+$ 
and suppose that the maximal root vector is written as a commutator 
$\lambda[\dot{x}^+_{i_1},[\dot{x}^+_{i_2},\cdots, [\dot{x}^+_{i_k},\dot{x}^+_{j_0}]\cdots]]$
with some $\lambda\in\C^{\times}$. 
Quite generally we say that an element $x$ of $U_q(\g)$ has weight $\beta\in Q$
if $k_ix=q^{(\beta,\alpha_i)}xk_i$ for all $i\in I$. 
For an element $A$ (resp., $B$) of weight $\alpha$ (resp., $\beta$), 
introduce the notation $[A,B]_q=AB-q^{(\alpha,\beta)}BA$. 
Then we have
\begin{align}
y = [\tilde{x}^-_{i_1,0},[\tilde{x}^-_{i_2,0},\cdots, [\tilde{x}^-_{i_k,0},
\tilde{x}^-_{j_0,1}]_q\cdots]_q]_q\,.
\label{e0}
\end{align}

In what follows we shall use the generating series 
\begin{align}
&\phi_i^\pm(z) = \sum_{m\geq 0}\phi_{i,\pm m}^\pm z^{\pm m},  
\quad
\tilde{\phi}_i^\pm(z) = \sum_{m\geq 0}\tilde{\phi}_{i,\pm m}^\pm z^{\pm
 m}.  
\label{phiz}
\end{align}

Let us define the following isomorphism.

\begin{prop} There is a unique isomorphism of $\CC$-algebras 
$$\sigma : U_q(\Glie)\rightarrow U_{q^{-1}}(\Glie)$$ 
satisfying
\begin{align*}
&\tilde{x}_{i,m}^+\mapsto q_i^2 \tilde{x}_{i,m}^-,\quad \tilde{x}_{i,m}^- \mapsto \tilde{x}_{i,m}^+,\quad 
\phi_{i,\pm r}^\pm \mapsto \phi_{i,\pm r}^\pm
\end{align*}
for $i\in I$, $m\in\ZZ$, $r\geq 0$.
\end{prop}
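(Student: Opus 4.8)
The plan is to use the Drinfeld-type presentation of $\Utg$ recorded above. By \eqref{BeDr2}, $U_q(\g)$ is the localization of $\Utg$ at the $\kappa_i$, and the same holds for $U_{q^{-1}}(\g)$; so it is enough to define $\sigma$ on the generators $\tilde{x}^\pm_{i,m},\tilde{\phi}^\pm_{i,\pm m},\kappa_i$ of $\Utg$. From \eqref{Dri-gen} together with $\tilde{\phi}^-_{i,0}=\kappa_i^2$ one gets $\kappa_i=\phi^-_{i,0}$ and $\tilde{\phi}^\pm_{i,\pm m}=\kappa_i\phi^\pm_{i,\pm m}$, so the prescription of the statement forces $\sigma(\kappa_i)=\kappa_i$ and $\sigma(\tilde{\phi}^\pm_{i,\pm m})=\tilde{\phi}^\pm_{i,\pm m}$. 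The images of all generators being thus determined, uniqueness is immediate. For existence I would verify that these images, regarded as elements of $\widetilde{U}_{q^{-1}}(\g)$, satisfy every defining relation of $\Utg$ (with parameter $q$); the universal property of the presentation then produces a homomorphism $\Utg\to U_{q^{-1}}(\g)$, which extends to the localization $\sigma\colon U_q(\g)\to U_{q^{-1}}(\g)$ because $\sigma(\kappa_i)=\kappa_i$ is invertible in $U_{q^{-1}}(\g)$.

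The mechanism behind the relation check is that $\sigma$ interchanges the raising and lowering currents (up to the scalar $q_i^2$) while fixing $\tilde{\phi}^\pm$ and $\kappa_i$, and that passing to $U_{q^{-1}}(\g)$ turns every $q_i$ into $q_i^{-1}$. In the $\tilde{\phi}^\pm$--$\tilde{x}^\pm$ and $\kappa_i$--$\tilde{x}^\pm$ relations the sign of each exponent $q_i^{\pm lC_{i,j}}$ is tied to the superscript $\pm$ of $\tilde{x}$, so the swap $+\leftrightarrow-$ together with $q_i\mapsto q_i^{-1}$ carries the $q$-relation for $\tilde{x}^+$ exactly onto the $q^{-1}$-relation for $\tilde{x}^-$, the scalar $q_i^2$ factoring out uniformly. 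The delicate relation is the mixed one
\begin{align*}
q_i^{-C_{i,j}}\tilde{x}^+_{i,r}\tilde{x}^-_{j,r'}-\tilde{x}^-_{j,r'}\tilde{x}^+_{i,r}
=\delta_{i,j}\frac{\tilde{\phi}^+_{i,r+r'}-\tilde{\phi}^-_{i,r+r'}}{q_i-q_i^{-1}}.
\end{align*}
Applying $\sigma$ and invoking the corresponding relation of $\widetilde{U}_{q^{-1}}(\g)$ for the pair $(j,i)$ (using $d_iC_{i,j}=d_jC_{j,i}$), the off-diagonal case $i\neq j$ closes up; for $i=j$ the sign flip of the denominator, $q_i-q_i^{-1}\mapsto q_i^{-1}-q_i$, is compensated exactly by the scalar $q_i^2$. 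This is what pins down the value $q_i^2$, and I expect it to be the crux of the whole verification.

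The two $\tilde{x}^\pm$--$\tilde{x}^\pm$ exchange relations and the $q$-Serre relations are each homogeneous in $\tilde{x}^+$ (resp. in $\tilde{x}^-$), so under $\sigma$ every term of such a relation acquires the same power of $q_i^2,q_j^2$, and this common factor drops out of the identity. For the Serre relations one uses in addition $\qbin{s}{k}_{q_i^{-1}}=\qbin{s}{k}_{q_i}$, valid because $[m]_{q_i^{-1}}=[m]_{q_i}$; thus the $q$-Serre relation for $\tilde{x}^+$ is sent to the $q^{-1}$-Serre relation for $\tilde{x}^-$. This step is bookkeeping rather than a genuine difficulty.

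Finally, to deduce that $\sigma$ is an isomorphism I would note that for any scalars $c_i\in\CC^*$ the assignment $\tilde{x}^\pm_{i,m}\mapsto c_i^{\pm1}\tilde{x}^\pm_{i,m}$, $\tilde{\phi}^\pm_{i,\pm m}\mapsto\tilde{\phi}^\pm_{i,\pm m}$, $\kappa_i\mapsto\kappa_i$ defines a rescaling automorphism, since every defining relation is homogeneous of the appropriate degree in $\tilde{x}^+$ and in $\tilde{x}^-$. Let $\sigma'\colon U_{q^{-1}}(\g)\to U_q(\g)$ be the homomorphism built from the same formulas with $q$ replaced by $q^{-1}$, that is $\tilde{x}^+_{i,m}\mapsto q_i^{-2}\tilde{x}^-_{i,m}$, $\tilde{x}^-_{i,m}\mapsto\tilde{x}^+_{i,m}$; its existence follows from the identical relation check. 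Then $\sigma'\circ\sigma$ and $\sigma\circ\sigma'$ are the rescaling automorphisms with $c_i=q_i^2$ and $c_i=q_i^{-2}$ respectively, hence invertible, so $\sigma$ is an isomorphism.
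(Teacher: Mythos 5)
Your proposal is correct and takes essentially the same route as the paper: define $\sigma$ on the Drinfeld-type generators and verify that the defining relations of $\Utg$ map to those of $\widetilde{U}_{q^{-1}}(\Glie)$, with the same crux — all relations are immediate except the mixed $\tilde{x}^+$--$\tilde{x}^-$ relation, where off the diagonal one uses $d_iC_{i,j}=d_jC_{j,i}$ and on the diagonal the scalar $q_i^2$ exactly compensates the sign flip $q_i-q_i^{-1}\mapsto q_i^{-1}-q_i$. Your explicit handling of uniqueness, of the localization at the $\kappa_i$, and of bijectivity via the companion map $\sigma'$ and rescaling automorphisms simply spells out details the paper leaves implicit.
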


In particular we have $\sigma(k_i) = k_i$ and $\sigma(\tilde{\phi}_{i,\pm r}^{\pm}) = \tilde{\phi}_{i,\pm r}^\pm$
for $i\in I$ and $r\geq 0$. This implies that $\sigma$ restricts to an isomorphism of $\CC$-algebras $\tilde{U}_q(\Glie) \rightarrow \tilde{U}_{q^{-1}}(\Glie)$.

\begin{proof} As $\sigma(k_i) = k_i$, it suffices to check that the defining relations of $\tilde{U}_q(\Glie)$
are mapped to the defining relations of $\tilde{U}_{q^{-1}}(\Glie)$. 
This is immediate for all relations except 
\begin{align}
q_i^{-C_{i,j}}\tilde{x}_{i,r}^+\tilde{x}_{j,r'}^--\tilde{x}_{j,r'}^-\tilde{x}_{i,r}^+
= \delta_{i,j}\frac{\tilde{\phi}^+_{i,r+r'}-\tilde{\phi}^-_{i,r+r'}}{q_i-q_i^{-1}}.
\label{chk}
\end{align} 
The left hand side of \eqref{chk} is mapped to 
$$
q_i^{-C_{i,j}}q_i^2\tilde{x}_{i,r}^-\tilde{x}_{j,r'}^+
-q_i^2\tilde{x}_{j,r'}^+\tilde{x}_{i,r}^- = - q_i^{-C_{i,j} + 2}(- \tilde{x}_{i,r}^-\tilde{x}_{j,r'}^+
+ q_j^{C_{j,i}}\tilde{x}_{j,r'}^+\tilde{x}_{i,r}^-).
$$ 
If $i\neq j$, this is zero in $\tilde{U}_{q^{-1}}(\Glie)$. If $i = j$, this is
$$
- (q_i^2\tilde{x}_{i,r'}^+\tilde{x}_{i,r}^- - \tilde{x}_{i,r}^-\tilde{x}_{i,r'}^+).
$$
In this case, the right hand side of \eqref{chk} is mapped to 
$$
\frac{\tilde{\phi}^+_{i,r+r'}-\tilde{\phi}^-_{i,r+r'}}{q_i-q_i^{-1}} 
= -\frac{\tilde{\phi}^+_{i,r+r'}-\tilde{\phi}^-_{i,r+r'}}{q_i^{-1}-q_i},
$$
and so we get the correct relation in $\tilde{U}_{q^{-1}}(\Glie)$.
\end{proof}

For example, in the  case of $\mathfrak{sl}_2$, we get 
$\sigma(e_1) = q^2 \tilde{x}_{1,0}^-$, $\sigma(f_1) = k_1x_{1,0}^+$, 
$\sigma(e_0) = \sigma(\tilde{x}_{1,1}^-) = x_{1,1}^+$ and 
$\sigma(f_0) = \sigma(x_{1,-1}^+k_1) = q^2 \tilde{x}_{1,-1}^- k_1$. 

\subsection{Borel algebra}\label{boralg}

\begin{defi} The Borel algebra $U_q(\bo)$ is 
the subalgebra of $U_q(\g)$ generated by $e_i$ 
and $k_i^{\pm1}$ with $0\le i\le n$. 
\end{defi}
This is a Hopf subalgebra of $U_q(\g)$. 
It is well known 
(see e.g.\cite[Section 4.21]{ja}, \cite[Lemma 4]{h2})
that 
$U_q(\bo)$ is isomorphic to the algebra defined by the generators
$e_i$,  
$k^{\pm1}_i$ ($0\le i\le n$) and the relations
\begin{align*}
&k_ik_j=k_jk_i,\quad 
k_ie_j=q_i^{C_{i,j}}e_jk_i,
\\
&\sum_{r=0}^{s}(-1)^r 
(e_i)^{(s-r)}e_j(e_i)^{(r)}=0\quad
(s=1-C_{i,j}, \ i\neq j)
\,.
\end{align*}

For $\g = \widehat{\mathfrak{sl}}_2$, 
the Borel algebra $U_q(\bo)$ can be easily described
in terms of the Drinfeld generators : 
it is the subalgebra of $U_q(\widehat{\mathfrak{sl}}_2)$ 
generated by the $x_{1,m}^+$, $x_{1,r}^-$, $k_1^{\pm 1}$, 
$\phi_{1,m}^+$ where $m \geq 0$ and $r > 0$.
Such a simple description does not hold in the general case. 
Nevertheless it is known \cite{bec} that the algebra $U_q(\mathfrak{b})$ 
contains 
the Drinfeld generators
$x_{i,m}^+$, $x_{i,r}^-$, $k_i^{\pm 1}$, $\phi_{i,r}^+$ 
where $i\in I$, $m \geq 0$ and $r > 0$. 

Let $U_q(\g)^\pm$ 
(resp. $U_q(\g)^0$) be the subalgebra of 
$U_q(\g)$ generated by the 
$x_{i,r}^\pm$ where $i\in I, r\in\ZZ$ 
(resp. by the $k_i^{\pm 1}$, $\phi_{i,\pm r}^\pm$ where $i\in I,r> 0$). 

Then by \cite{bec}, 
\cite[Proposition 1.3]{bcp} we have a triangular 
decomposition (isomorphism of vector spaces) :
$$
U_q(\g)\simeq 
U_q(\g)^-\otimes U_q(\g)^0 \otimes U_q(\g)^+.
$$
Let further $U_q(\bo)^\pm = U_q(\g)^\pm\cap U_q(\bo)$ and
$U_q(\bo)^0 = U_q(\g)^0\cap U_q(\bo)$. 
Then we have 
\begin{align*}
U_q(\bo)^+ = \langle x_{i,m}^+\rangle_{i\in I, m\geq 0},\quad
U_q(\bo)^0 = 
\langle\phi_{i,r}^+,k_i^{\pm1}\rangle_{i\in I, r>0}.
\end{align*} 
In general, $U_q(\bo)^-$ does not have such a nice description 
in terms of Drinfeld generators, except when $\g = \widehat{\mathfrak{sl}}_2$
for which 
$U_q(\bo)^- = \langle x_{1,m}^-\rangle_{m\geq 1}$.

By using the PBW basis of \cite{bec}, we have a triangular decomposition
\begin{equation}\label{triandecomp}
U_q(\bo)\simeq U_q(\bo)^-\otimes 
U_q(\bo)^0 \otimes U_q(\bo)^+.
\end{equation}

\subsection{From $\Utg$ to $U_q(\mathfrak{b})$}\label{subsec:Utg}

A representation $V$ of the asymptotic algebra $\Utg$ is said to be 
$Q$-graded if there is a decomposition into a direct sum of linear subspaces 
$V = \bigoplus_{\alpha\in Q} V^{(\alpha)}$ such that 
\begin{align*}
\tilde{x}_{i,r}^\pm V^{(\alpha)}\subset V^{(\alpha \pm \alpha_i)},\quad 
\tilde{\phi}_{i,\pm m}^\pm V^{(\alpha)}\subset V^{(\alpha)}, 
\quad \kappa_i V^{(\alpha)}\subset V^{(\alpha)}
\end{align*}
hold for any $\alpha\in Q$, $i\in I$, $r\in\ZZ, m\geq 0$.

\begin{prop}\label{asym} 
For a $Q$-graded $\Utg$-module  $V$, 
there is a unique structure of $U_q(\mathfrak{b})$-module on $V$ 
such that 
\begin{align}
e_i\, v = \tilde{x}_{i,0}^+v,\quad e_0\, v = y\, v,
\quad
k_iv = q_i^{\alpha(\alpha_i^{\vee})}v\,
\quad (i\in I,\ v\in V^{(\alpha)})\,,
\label{vsigma1}
\end{align}
where the element $y\in \Utg$ is given in \eqref{e0}.

Similarly, for a $Q$-graded $\tilde{U}_{q^{-1}}(\Glie)$-module  $V$, 
 there is a unique structure of $U_q(\mathfrak{b})$-module on $V$ such that
\begin{align}
e_i\,v = \sigma(\tilde{x}_{i,0}^+)v,\quad e_0\,v = \sigma(y) v,
\quad
k_iv = q_i^{-\alpha(\alpha_i^{\vee})}v\, 
\quad (i\in I,\ v\in V^{(\alpha)})\,.
\label{vsigma2}
\end{align}
\end{prop}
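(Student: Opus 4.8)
The plan is to work with the presentation of $U_q(\mathfrak{b})$ recalled above, by generators $e_i,k_i^{\pm1}$ ($0\le i\le n$) subject to the $k$--$k$, $k$--$e$ and quantum Serre relations, together with the relation $k_0^{a_0}k_1^{a_1}\cdots k_n^{a_n}=1$ that $U_q(\mathfrak{b})$ inherits as a subalgebra of $U_q(\g)$. Since the affine type is non-twisted we have $a_0=1$, so that $k_0=k_1^{-a_1}\cdots k_n^{-a_n}$ is determined by the remaining generators; it then suffices to prescribe the action of $e_i$ ($0\le i\le n$) and of $k_i^{\pm1}$ ($i\in I$) and to verify the relations. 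First I would define the operators: $e_i$ ($i\in I$) acts as the element $\tilde{x}_{i,0}^+\in\Utg$ and $e_0$ as the element $y\in\Utg$ of \eqref{e0}, both through the given $\Utg$-action, while $k_i$ ($i\in I$) acts as the scalar $q_i^{\alpha(\alpha_i^\vee)}$ on each graded piece $V^{(\alpha)}$. Two elementary identities will be used repeatedly: $q_i^{C_{i,j}}=q^{(\alpha_i,\alpha_j)}$ (from $(\alpha_i,\alpha_j)=d_iC_{i,j}$) and $q_i^{\alpha(\alpha_i^\vee)}=q^{(\alpha,\alpha_i)}$.

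Next I would check the defining relations family by family. The commutativity $k_ik_j=k_jk_i$ is immediate, every $k_i$ being diagonal on the grading. The quantum Serre relations involve only the $e_i$'s: under the isomorphism \eqref{BeDr2} the elements $\tilde{x}_{i,0}^+$ and $y$ are precisely the images of the Chevalley generators $e_i,e_0$, so these relations already hold as identities inside the subalgebra $\Utg\subset U_q(\g)$ and therefore hold automatically as operators on $V$, with no computation. This leaves only $k_ie_j=q_i^{C_{i,j}}e_jk_i$.

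For these I would argue purely from the $Q$-grading. When $i,j\in I$, $e_j$ maps $V^{(\alpha)}$ to $V^{(\alpha+\alpha_j)}$, and after cancelling the common factor $q^{(\alpha,\alpha_i)}$ the relation reduces to $q^{(\alpha_j,\alpha_i)}=q_i^{C_{i,j}}$, which is one of the identities above. The cases involving the node $0$ are the one delicate point, and here I must first determine the grading shift of $e_0=y$: since each $\tilde{x}_{i,r}^-$ lowers the $Q$-degree by $\alpha_i$, the nested $q$-commutator \eqref{e0} shows that $y$ maps $V^{(\alpha)}$ to $V^{(\alpha-\theta)}$, where $\theta=\sum_{i\in I}a_i\alpha_i$ is the highest root. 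Using $\alpha_0=\delta-\theta$ together with $(\delta,\alpha_i)=0$ and $(\theta,\theta)=(\alpha_0,\alpha_0)=2d_0$, the three relations $k_ie_0=q_i^{C_{i,0}}e_0k_i$, $k_0e_j=q_0^{C_{0,j}}e_jk_0$ and $k_0e_0=q_0^{C_{0,0}}e_0k_0$ all collapse to the expected scalar identities (here $k_0$ acts by $q^{-(\theta,\alpha)}$ on $V^{(\alpha)}$, as forced by $k_0=k_1^{-a_1}\cdots k_n^{-a_n}$). This gives existence; uniqueness is clear, since the action of every generator of $U_q(\mathfrak{b})$ is dictated by the stated formulas.

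Finally I would deduce the second assertion from the first by transport along $\sigma$. As noted above, $\sigma$ restricts to an isomorphism $\Utg\to\tilde{U}_{q^{-1}}(\g)$, so a $Q$-graded $\tilde{U}_{q^{-1}}(\g)$-module $V$ pulls back to a $\Utg$-module $V^\sigma$ via $u\cdot v=\sigma(u)v$. Because $\sigma$ exchanges the raising and lowering Drinfeld currents, $V^\sigma$ becomes $Q$-graded once the grading is reversed, $(V^\sigma)^{(\alpha)}:=V^{(-\alpha)}$; applying the first part to $V^\sigma$ and rewriting the resulting formulas in terms of the original grading yields exactly \eqref{vsigma2}. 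The main obstacle in the whole argument is the bookkeeping at the affine node: one has to identify the $Q$-weight of the complicated element $y$ and match the resulting powers of $q$ against the affine Cartan data through $\alpha_0=\delta-\theta$; every other relation is either diagonal or inherited verbatim from $U_q(\g)$.
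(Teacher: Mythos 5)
Your proof is correct and follows essentially the same route as the paper: one verifies the Chevalley--Serre presentation of $U_q(\mathfrak{b})$, obtaining $k_ik_j=k_jk_i$ trivially, the relations $k_ie_j=q_i^{C_{i,j}}e_jk_i$ from the $Q$-grading (with $\deg y=-\theta$), and the quantum Serre relations from the fact that $\tilde{x}_{1,0}^+,\dots,\tilde{x}_{n,0}^+,y$ are the images of the Chevalley generators under \eqref{BeDr2} and hence satisfy them already inside $\Utg$ (resp.\ $\tilde{U}_{q^{-1}}(\Glie)$). The differences are only presentational: you spell out bookkeeping the paper compresses into three lines---in particular the determination of the $k_0$-action from $a_0=1$ and the inherited relation $k_0^{a_0}k_1^{a_1}\cdots k_n^{a_n}=1$ (which is genuinely needed for the uniqueness claim and is left tacit in the paper), and the weight computation at the affine node via $\alpha_0=\delta-\theta$---and you deduce the second statement by pulling the module back along $\sigma$ with reversed $Q$-grading, where the paper simply runs the same verification in $\tilde{U}_{q^{-1}}(\Glie)$ in parallel.
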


\begin{proof}
Let us check that the relations of $U_q(\mathfrak{b})$ are satisfied.
Since each $V^{(\alpha)}$ is a joint eigenspace of  $k_i$, it is clear that $k_ik_j=k_jk_i$.  
From $\tilde{x}_{i,r}^\pm V^{(\alpha)}\subset V^{(\alpha \pm \alpha_i)}$ ,
we get the relations  
$k_ie_j=q_i^{C_{i,j}}e_jk_i$. 
Finally the elements $y,\tilde{x}^+_{1,0},\cdots,\tilde{x}^+_{n,0}$ satisfy 
Serre relations in the algebra $\Utg$ (resp. $\tilde{U}_{q^{-1}}(\Glie)$).
\end{proof}

In general, the action of $U_q(\mathfrak{b})$ can not be extended to 
an action of 
the full quantum affine algebra $U_q(\Glie)$, 
because the $\kappa_i$'s are allowed to have non-trivial kernels on $V$. 

As $\sigma(U_{q^{-1}}(\bo))\neq U_q(\bo)$, it is not sufficient to
have a $U_{q^{-1}}(\bo)$-module structure on $V$ to get a $U_q(\bo)$-module structure
via $\sigma$. That is why we will construct $\tilde{U}_{q^{-1}}(\Glie)$-modules.

\begin{rem}\label{grading-shift} 
For a $Q$-graded $U_q(\bo)$-module 
$V=\oplus_{\alpha\in Q}V^{(\alpha)}$, one can freely 
shift the action of the $k_i$'s. 
Namely, for any $\beta_i\in \C^{\times}$ ($i\in I$), 
 a new $U_q(\bo)$-module structure on $V$ is obtained
by setting 
\begin{align*}
k_i\ v=\beta_i\, q^{\alpha(\alpha_i)}v\qquad (v\in V^{(\alpha)})\,
\end{align*}
and retaining the same action of $e_i$'s. 
\end{rem}

\subsection{Coproduct}

The algebra $U_q(\Glie)$ has a natural $Q$-grading defined by
\begin{align*}
&\deg\bigl(x^\pm_{i,m}\bigr) = \pm\alpha_i,
\quad
\deg\bigl(h_{i,r}\bigr)=
\deg\bigl(k_i^\pm\bigr) = 0\,.
\end{align*}
Let $\U_q^+(\Glie)$ (resp. ${\U}_q^-(\Glie)$) be the subalgebra of $U_q(\Glie)$
consisting of elements of positive (resp. negative) $Q$-degree. 
These subalgebras should not be confused with the subalgebras $U_q(\Glie)^\pm$ 
previously defined in terms of Drinfeld generators. 
Let 
$$
X^+ = \sum_{j\in I, m\in\ZZ}\CC x_{j,m}^+\subset {\U}_q^+(\Glie).
$$

\begin{thm}\cite{da}\label{apco} Let $i\in I$. For $m\in\ZZ$, we have
\begin{equation}\label{x}\Delta\left(x_{i,m}^+\right) \in x_{i,m}^+\otimes 1 + U_q(\Glie)
\otimes \left(U_q(\Glie) X^+\right).\end{equation}
For $r > 0$, we have
\begin{equation}\label{h}
\Delta\left(\phi_{i,\pm r}^\pm \right) \in 
\sum_{0\leq l\leq r} \phi_{i,\pm l}^\pm \otimes \phi_{i,\pm (r - l)}^\pm
+ \U_q^-(\Glie) \otimes \U_q^+(\Glie),
\end{equation}
\begin{equation}\label{mplus}
\Delta\left(x_{i,r}^-\right) \in x_{i,r}^-\otimes k_i + 1 \otimes x_{i,r}^- + \sum_{1\le j < r}
x_{i,r - j}^-\otimes \phi_{i,j}^+ + U_q(\Glie)\otimes \left(U_q(\Glie) X^+\right).
\end{equation}
For $r\le 0$, we have
\begin{equation}\label{mmoins}
\Delta\left(x_{i,r}^-\right) \in x_{i,r}^-\otimes k_i^{-1} + 1 \otimes x_{i,r}^- + \sum_{1\le j \le -r}
x_{i,r + j}^-\otimes \phi_{i,-j}^- + U_q(\Glie)\otimes \left(U_q(\Glie) X^+\right).
\end{equation}
\end{thm}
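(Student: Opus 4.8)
The plan is to establish the four formulas by induction on the Drinfeld index, exploiting that $\Delta$ is an algebra homomorphism and that all Drinfeld generators are built by iterated $q$-commutators from the Chevalley generators $e_i,f_i,k_i$ ($0\le i\le n$), whose coproducts are given. Write $J = U_q(\g)\otimes(U_q(\g)X^+)$ and $J' = \U_q^-(\g)\otimes\U_q^+(\g)$ for the two ``error'' subspaces occurring in \eqref{x}--\eqref{mmoins}. The first task is to record stability properties of these subspaces. One checks easily that $J$ is a left ideal of $U_q(\g)\otimes U_q(\g)$, since $U_q(\g)X^+$ is a left ideal of $U_q(\g)$; more importantly, using the commutation relations to move factors of $X^+$ to the right, one verifies that $J$ and $J+J'$ are stable under right multiplication by $\Delta(e_i)$, $\Delta(f_i)$ and $\Delta(k_i^{\pm1})$. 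These closure lemmas are what make an induction on generators possible, and proving them is the technical backbone of the argument.

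Next I would treat the base cases. For $i\in I$ the formulas at index $0$ are immediate: $\Delta(x_{i,0}^+)=\Delta(e_i)=e_i\otimes1+k_i\otimes e_i$ has its second term in $J$ because $e_i=x_{i,0}^+\in X^+$; similarly $\Delta(x_{i,0}^-)=\Delta(f_i)=f_i\otimes k_i^{-1}+1\otimes f_i$ is precisely the $r=0$ instance of \eqref{mmoins}, and $\phi_{i,0}^\pm=k_i^{\pm1}$ is grouplike. The remaining base data come from the affine node: using \eqref{e0}, which expresses the image of $e_0$ as a nested $q$-commutator of the elements $\tilde{x}^-_{j,1}$, together with $\Delta(e_0)=e_0\otimes1+k_0\otimes e_0$ and the analogous statement for $f_0$, one extracts the coproduct of the level-one generators $x_{i,1}^-$ and $x_{i,-1}^+$ in the claimed form.

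For the inductive step I would raise the index by one at a time. A Drinfeld relation expresses $x_{i,r\pm1}^\pm$, up to a nonzero scalar, as the commutator of $x_{i,r}^\pm$ with the index-shifting element $h_{i,\pm1}$ (equivalently, with $\phi_{i,\pm1}^\pm$). Applying $\Delta$ and substituting the inductive hypotheses for $\Delta(x_{i,r}^\pm)$ and for $\Delta(\phi_{i,\pm1}^\pm)$, one collects terms: the product of the two leading terms reproduces the asserted leading term of the next formula — in particular it generates the new summand $x_{i,r-j}^-\otimes\phi_{i,j}^+$ of \eqref{mplus} and its analogue in \eqref{mmoins} — while every cross term and every product involving an error factor lands in $J$ by the closure lemmas. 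The formula \eqref{h} for $\phi$ is then obtained a posteriori from the defining relation $[x_{i,r}^+,x_{i,s}^-]=(\phi^+_{i,r+s}-\phi^-_{i,r+s})/(q_i-q_i^{-1})$: taking $\Delta$ of both sides, inserting the now-established $x^\pm$ formulas, and projecting onto the appropriate graded component isolates $\sum_{l}\phi^\pm_{i,l}\otimes\phi^\pm_{i,r-l}$ modulo $J'$.

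The main obstacle is the interplay between these inductions and the one-sided nature of $J$. The coproduct of $\phi_{i,\pm1}^\pm$ is needed to raise $x$-indices, yet $\phi$ is itself produced from the $x$'s, so the inductions on $x^+$, $x^-$ and $\phi$ must be interleaved into a single induction, say on the total index $|r|+|s|$. The delicate point is that $U_q(\g)X^+$ is only a left ideal: when the homomorphism property $\Delta(ab)=\Delta(a)\Delta(b)$ places an $X^+$-factor to the left of some element, one must use the commutation relations of the Drinfeld presentation to commute it back to the right, at the cost of lower-index terms that are in turn controlled by the induction. Making this bookkeeping precise — so that every term genuinely falls into $J$ or $J'$ and no spurious leading term survives — is the crux of the proof.
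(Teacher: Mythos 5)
You should first be aware that the paper contains no proof of Theorem \ref{apco}: it is quoted verbatim from Damiani \cite{da}, where it is established by a long explicit analysis using the braid group action and PBW root vectors, not by the generator-chasing induction you outline. Your sketch must therefore stand on its own, and its central claims do not. The ``closure lemma'' is false as stated: $J=U_q(\Glie)\otimes\left(U_q(\Glie)X^+\right)$ is \emph{not} stable under right multiplication by $\Delta(f_j)=f_j\otimes k_j^{-1}+1\otimes f_j$, because in the second tensor factor $x^+_{j,m}f_j-f_jx^+_{j,m}$ produces $\phi^\pm_{j,m}$-terms which escape $U_q(\Glie)X^+$. Worse, your two pivotal assertions --- that the products of leading terms generate the new summands $x^-_{i,r-j}\otimes\phi^+_{i,j}$, and that every product involving an error factor lands in $J$ --- are jointly inconsistent with the statement being proved. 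Test the first nontrivial case: at $r=1$ the sum in \eqref{mplus} is empty, so $\Delta(x^-_{i,1})$ has leading part $x^-_{i,1}\otimes k_i+1\otimes x^-_{i,1}$, while $\Delta(h_{i,1})=h_{i,1}\otimes 1+1\otimes h_{i,1}+E$ with $E$ of shape $x^-\otimes(\cdots)x^+_{i,0}$, an element of $J$. Bracketing leading against leading yields only $x^-_{i,2}\otimes k_i+1\otimes x^-_{i,2}$ and contains no $\phi$-mode whatsoever; the required correction $x^-_{i,1}\otimes\phi^+_{i,1}$ in $\Delta(x^-_{i,2})$ can only arise from the product of the error term $E$ with $1\otimes x^-_{i,1}$, where the relation $[x^+_{i,0},x^-_{i,1}]\propto\phi^+_{i,1}$ converts a $J$-term into a $\phi$-correction. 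So error-factor products demonstrably do \emph{not} all stay in $J$, and tracking exactly which ones re-emerge as the summands of \eqref{mplus} and \eqref{mmoins} --- the bookkeeping you explicitly defer in your last paragraph --- is not a technicality but the entire content of the theorem.

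The base case has a second genuine gap. Formula \eqref{e0} expresses the image of $e_0$ as a nested $q$-bracket in $\tilde{x}^-_{j_0,1}$ and zero-mode generators; it does not conversely yield the level-one generators, so $\Delta(x^-_{j_0,1})$ cannot simply be ``extracted'' --- you must undo the nested bracket by commuting with $e_{i_k},\dots,e_{i_1}$ and control all the extra terms so created, and even then you reach only the single node $j_0$ occurring in the chosen decomposition of the highest root vector. For the remaining $i\in I$ one needs $x^-_{i,1}$ as a constant multiple of $[h_{j,1},x^-_{i,0}]$ for a neighbouring node $j$, which presupposes $\Delta(h_{j,1})$ --- itself obtained from $[x^+_{j,0},x^-_{j,1}]$. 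Your single interleaved induction on the total Drinfeld index does not break this circularity; one needs an additional, carefully ordered induction along the Dynkin diagram with the structure constants verified at each step. These two points are precisely where the published proof in \cite{da} is long and delicate, so while your overall strategy is the natural folklore one, the proposal as written has real gaps rather than omitted routine details.
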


\section{Category $\mathcal{O}$ for $U_q(\mathfrak{b})$}\label{catO}

\subsection{Highest $\ell$-weight modules}\label{lhwm}

Denote by $\tb$ the subalgebra of $U_q(\bo)$ generated by $\{k_j^{\pm1}\}_{j\in I}$. 
Set $\tb^*=\bigl(\C^\times\bigr)^I$, and endow it with a group structure
by pointwise multiplication. 
We define a group morphism $\overline{\phantom{u}}:Q \longrightarrow \tb^*$ by setting  
$\ga_i(j)=q_i^{C_{i,j}}$ for a simple root $\alpha_i$. 
We shall use the standard partial ordering  on $\tb^*$:
\begin{align}
\omega\leq \omega' \quad \text{if $\omega \omega'^{-1}$ 
is a product of $\{\ga_i^{-1}\}_{i\in I}$}.
\label{partial}
\end{align}

For a $U_q(\mathfrak{b})$-module $V$ and $\omega\in \tb^*$, we set
\begin{align}
V_{\omega}=\{v\in V \mid  k_i\, v = \omega(i) v\ (\forall i\in I)\}\,,
\label{wtsp}
\end{align}
and call it the weight space of weight $\omega$. 
For any $i\in I$, $r\in\ZZ$ we have $\phi_{i,r}^\pm (V_\omega)\subset V_\omega$
and $x_{i,r}^\pm (V_{\omega}) \subset V_{\omega \ga_i^{\pm 1}}$.
We say that $V$ is $\tb$-diagonalizable 
if $V=\underset{\omega\in \tb^*}{\bigoplus}V_{\omega}$.

\begin{defi} A series $\Psib=(\Psi_{i, m})_{i\in I, m\geq 0}$ 
of complex numbers such that 
$\Psi_{i,0}\neq 0$ for all $i\in I$ 
is called an $\ell$-weight. 
\end{defi}

We denote by $\tb^*_\ell$ the set of $\ell$-weights. 
Identifying $(\Psi_{i, m})_{m\geq 0}$ with its generating series we shall
write
\begin{align*}
\Psib = (\Psi_i(z))_{i\in I},
\quad
\Psi_i(z) = \underset{m\geq 0}{\sum} \Psi_{i,m} z^m.
\end{align*}
Since each $\Psi_i(z)$ is an invertible formal power series,
$\tb^*_\ell$ has a natural group structure. 
We have a surjective morphism of groups
$\varpi : \tb^*_\ell\rightarrow \tb^*$ given by 
$\varpi(\Psib)(i)=\Psi_{i,0}$.
For a $U_q(\mathfrak{b})$-module $V$ and $\Psib\in\tb_\ell^*$, 
the linear subspace
\begin{align}
V_{\Psibs} =
\{v\in V\mid
\exists p\geq 0, \forall i\in I, 
\forall m\geq 0,  
(\phi_{i,m}^+ - \Psi_{i,m})^pv = 0\}
\label{l-wtsp} 
\end{align}
is called the $\ell$-weight space of $V$ of $\ell$-weight $\Psib$. 

\begin{defi} A $U_q(\mathfrak{b})$-module $V$ is said to be 
of highest $\ell$-weight 
$\Psib\in \tb^*_\ell$ if there is $v\in V$ such that 
$V =U_q(\mathfrak{b})v$ 
and the following hold:
\begin{align*}
e_i\, v=0\quad (i\in I)\,,
\qquad 
\phi_{i,m}^+v=\Psi_{i, m}v\quad (i\in I,\ m\ge 0)\,.
\end{align*}
\end{defi}

The $\ell$-weight $\Psib\in \tb^*_\ell$ is uniquely determined by $V$. 
It is called the highest $\ell$-weight of $V$. 
The vector $v$ is said to be a highest $\ell$-weight vector of $V$.
See \cite{nams, h2} for an analogous definition in the 
context of representations of quantum affinizations. 

\begin{lem} Let $V$ be a highest $\ell$-weight $U_q(\mathfrak{b})$-module 
with highest $\ell$-weight vector $v$. Then $x_{i,m}^+v=0$ 
holds for all $i\in I, m\geq 0$.

Consequently $V = U_q(\mathfrak{b})^-v$.
Moreover $V$ is $\tb$-diagonalizable 
and $V=\underset{\lambda\leq \varpi(\Psibs)}{\bigoplus}V_{\lambda}$.
\end{lem}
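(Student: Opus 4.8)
The plan is to prove the four assertions in turn, with $x_{i,m}^+v=0$ as the essential input and the remaining statements following formally from the triangular decomposition \eqref{triandecomp} together with the $Q$-grading.

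First I would show $x_{i,m}^+v=0$ for all $i\in I$, $m\geq0$ by induction on $m$, the case $m=0$ being the hypothesis $e_iv=x_{i,0}^+v=0$. For the inductive step I would use the defining relation of $\Utg$ relating the modes of $\tilde{\phi}_i^+$ and $\tilde{x}_j^+$. Specialising it to $m=1$, $j=i$ and using $\tilde{\phi}_{i,0}^+=1$, one extracts
\begin{align*}
(q_i^2-q_i^{-2})\,\tilde{x}_{i,r+1}^+=\tilde{\phi}_{i,1}^+\tilde{x}_{i,r}^+-\tilde{x}_{i,r}^+\tilde{\phi}_{i,1}^+.
\end{align*}
By \eqref{Dri-gen} we have $\tilde{x}_{i,r}^+=x_{i,r}^+$ and $\tilde{\phi}_{i,1}^+=k_i^{-1}\phi_{i,1}^+$, so all operators involved lie in $U_q(\bo)$ and act on $V$; moreover $\tilde{\phi}_{i,1}^+$ acts on $v$ by the scalar $\Psi_{i,0}^{-1}\Psi_{i,1}$. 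Applying the displayed identity to $v$ with $r=m-1$ and using the inductive hypothesis $x_{i,m-1}^+v=0$, both terms on the right vanish; since $q$ is not a root of unity, $q_i^2\neq q_i^{-2}$, and therefore $x_{i,m}^+v=0$.

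Next, $V=U_q(\bo)^-v$ follows from \eqref{triandecomp}. Writing $V=U_q(\bo)v=U_q(\bo)^-\,U_q(\bo)^0\,U_q(\bo)^+v$, I would note that $U_q(\bo)^+=\langle x_{i,m}^+\rangle_{i\in I,m\geq0}$ reduces to scalars on $v$ by the first assertion, so $U_q(\bo)^+v=\C v$, and that $U_q(\bo)^0=\langle\phi_{i,r}^+,k_i^{\pm1}\rangle_{i\in I,r>0}$ acts on $v$ by the scalars $\phi_{i,r}^+v=\Psi_{i,r}v$, $k_iv=\Psi_{i,0}v$ (recall $\Psi_{i,0}\neq0$), so $U_q(\bo)^0v=\C v$. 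Hence $V=U_q(\bo)^-v$.

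Finally, for the weight decomposition I would use that $U_q(\bo)^-$ is $Q$-graded, a component of degree $-\beta$ with $\beta\in Q^+$ sending $V_\omega$ into $V_{\omega\,\overline{\beta}^{-1}}$ because each $x_{i,r}^-$ multiplies weights by $\ga_i^{-1}$. Since $\phi_{i,0}^+=k_i$ gives $v\in V_{\varpi(\Psib)}$, the equality $V=U_q(\bo)^-v$ exhibits $V$ as a span of weight vectors whose weights have the form $\varpi(\Psib)\,\overline{\beta}^{-1}\leq\varpi(\Psib)$; as weight spaces for distinct weights are linearly independent, this shows at once that $V$ is $\tb$-diagonalizable and that $V=\bigoplus_{\lambda\leq\varpi(\Psibs)}V_\lambda$. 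The only non-formal point is the inductive step of the first assertion, and the crux there is simply that $q$ not being a root of unity makes the coefficient $q_i^2-q_i^{-2}$ invertible, allowing the bootstrap from $x_{i,0}^+v=0$ to all higher modes; the rest is bookkeeping with the triangular decomposition and the $Q$-grading.
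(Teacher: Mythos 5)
Your proof is correct and follows essentially the same route as the paper: the key identity $[\tilde{\phi}^+_{i,1},x^+_{i,m}]=(q_i^2-q_i^{-2})x^+_{i,m+1}$ you extract from the defining relations of $\Utg$ is exactly the formula the paper's proof invokes for the induction, and the remaining assertions are deduced, as in the paper, from the triangular decomposition \eqref{triandecomp} and the $Q$-grading. Your write-up merely makes explicit two points the paper leaves implicit — the derivation of the commutator identity from the relation for $\tilde{\phi}^+_{i,m}\tilde{x}^+_{j,r}$ at $m=1$, $j=i$, and the invertibility of $q_i^2-q_i^{-2}$ since $q$ is not a root of unity — which is sound bookkeeping, not a different argument.
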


\begin{proof} 
The first statement can be verified by induction using 
the formula
\begin{align*}
[\tilde{\phi}^+_{i,1},x^+_{i,m}]=(q_i^2-q_i^{-2})x^+_{i,m+1}\,.
\end{align*}
The rest of the assertions are clear from 
(\ref{triandecomp}).
\end{proof}

\medskip

\noindent{\it Example.}\quad 
For any $\Psib\in \tb^*_\ell$, define 
the Verma module $M(\Psib)$ to be the quotient of 
$U_q(\mathfrak{b})$ 
by the left ideal generated by $e_i$  
($i\in I$) and $\phi_{i,m}^+-\Psi_{i,m}$ 
($i\in I, m\geq 0$). 
From (\ref{triandecomp}), $M(\Psib)$ is a 
free $U_q(\mathfrak{b})^-$-module 
of rank $1$. In particular it is non trivial and it is a highest $\ell$-weight module of highest $\ell$-weight $\Psib$. 

\subsection{Simple highest $\ell$-weight modules}

Let $\Psib\in\tb^*_\ell$. By a standard argument, the Verma module
$M(\Psib)$ has a unique proper submodule, and so we get the following.

\begin{prop}\label{simple} 
For any $\Psib\in \tb^*_\ell$, there exists a simple 
highest $\ell$-weight module $L(\Psib)$ of highest $\ell$-weight $\Psib$. This module is unique up to isomorphism.  
\end{prop}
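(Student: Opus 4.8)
The plan is to establish existence and uniqueness of the simple highest $\ell$-weight module $L(\Psib)$ by exploiting the Verma module $M(\Psib)$ constructed in the preceding Example, following the standard construction of irreducible highest-weight modules in the spirit of Kac-Moody and quantum group theory. First I would show that $M(\Psib)$ possesses a unique maximal proper submodule. The key structural input is the weight-space decomposition $M(\Psib)=\bigoplus_{\lambda\le\varpi(\Psibs)}M(\Psib)_\lambda$ from the Lemma above, together with the fact that $M(\Psib)_{\varpi(\Psibs)}$ is one-dimensional, spanned by the image $\bar v$ of $1$. Indeed, since $M(\Psib)$ is a free $U_q(\mathfrak{b})^-$-module of rank one on $\bar v$ and $U_q(\mathfrak{b})^-$ has strictly negative $Q$-degree away from its degree-zero part, the top weight space $M(\Psib)_{\varpi(\Psibs)}=\C\bar v$.

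Next I would argue that any proper submodule $N\subsetneq M(\Psib)$ avoids the top weight space, i.e. $N\cap M(\Psib)_{\varpi(\Psibs)}=0$. Since $N$ is a $\tb$-submodule, it is itself weight-graded, $N=\bigoplus_\lambda(N\cap M(\Psib)_\lambda)$. If $N$ contained a nonzero multiple of $\bar v$, then it would contain $\bar v$, whence $N\supset U_q(\mathfrak{b})\bar v=M(\Psib)$, contradicting properness. Therefore the sum $M^{\mathrm{sing}}(\Psib)$ of all proper submodules still avoids the top weight space, so it is itself a proper submodule; it is thus the unique maximal proper submodule. I would then define
\begin{align*}
L(\Psib)=M(\Psib)/M^{\mathrm{sing}}(\Psib).
\end{align*}
By construction $L(\Psib)$ is simple, it is generated by the image $v$ of $\bar v$, and the defining relations $e_i\,v=0$, $\phi_{i,m}^+v=\Psi_{i,m}v$ descend from $M(\Psib)$, so $L(\Psib)$ is a simple highest $\ell$-weight module of highest $\ell$-weight $\Psib$. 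This proves existence.

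For uniqueness, suppose $L$ and $L'$ are two simple highest $\ell$-weight modules of the same highest $\ell$-weight $\Psib$, with highest $\ell$-weight vectors $v,v'$. By the universal property of the Verma module (both $v$ and $v'$ are annihilated by the $e_i$ and are eigenvectors for $\phi_{i,m}^+$ with eigenvalue $\Psi_{i,m}$), there are surjective $U_q(\mathfrak{b})$-module homomorphisms $M(\Psib)\twoheadrightarrow L$ and $M(\Psib)\twoheadrightarrow L'$ sending $\bar v$ to $v,v'$ respectively. The kernels are proper submodules of $M(\Psib)$, hence each is contained in $M^{\mathrm{sing}}(\Psib)$; but since $L,L'$ are simple, each kernel is in fact maximal, forcing both kernels to equal $M^{\mathrm{sing}}(\Psib)$. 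Hence $L\cong L(\Psib)\cong L'$.

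The step I expect to be the main (though modest) obstacle is the rigorous verification that the top weight space of $M(\Psib)$ is exactly one-dimensional and that proper submodules cannot meet it. This rests on the triangular decomposition \eqref{triandecomp} and the strict $Q$-grading of $U_q(\mathfrak{b})^-$; one must be slightly careful that the partial order on $\tb^*$ is genuinely compatible with the $Q^+$-grading so that $\varpi(\Psibs)$ is a maximal weight and no cancellation can produce a submodule element in degree zero. Everything else is the standard ``unique maximal submodule implies unique simple quotient'' argument, and the uniqueness follows formally from the universal property of $M(\Psib)$.
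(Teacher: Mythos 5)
Your proof is correct and takes the same route as the paper, which simply invokes ``a standard argument'' showing that the Verma module $M(\Psib)$ has a unique maximal proper submodule and defines $L(\Psib)$ as the quotient. Your write-up just fills in the details the paper leaves implicit (one-dimensionality of the top weight space via the triangular decomposition \eqref{triandecomp}, weight-gradedness of submodules, injectivity of $\beta\mapsto\overline{\beta}$ on $Q$ since $q$ is not a root of unity, and the universal property for uniqueness), all of which are sound.
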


Let us give two fundamental examples.

\medskip

\noindent{\it Example.}\quad 
Let $\omega\in\tb^*$. We define $\Psib_\omega\in\tb^*_\ell$
by $(\Psib_\omega)_{i,0} = \omega(i)$, $(\Psib_\omega)_{i,m} = 0$ for $i\in I$ and $m\geq 1$.
Then $L(\Psib_\omega) = \CC v$ is $1$-dimensional.
\medskip

\noindent{\it Example.}\quad 
For $i\in I$, let $P_i(z)\in\mathbb{C}[z]$ be a polynomial with constant 
term $1$. Set
\begin{align*}
\Psib = (\Psi_i(z))_{i\in I},
\quad
\Psi_i(z) = 
q_i^{\text{deg}(P_i)}\frac{P_i(zq_i^{-1})}{P_i(zq_i)}.
\end{align*}
Then $L(\Psib)$ is finite-dimensional. 
Moreover the action of $U_q(\mathfrak{b})$
can be uniquely extended to an action of the full quantum affine algebra 
$U_q(\Glie)$, and  
all (type $1$) irreducible finite-dimensional $U_q(\Glie)$-modules
are of this form.
This follows from the classification of simple finite dimensional 
modules 
of quantum affine algebras \cite{Cha2} by Drinfeld polynomials 
along with the following result.

\begin{prop}\label{fd} Let $V$ be a simple finite dimensional 
 $U_q(\g)$-module. 
 Then $V$ is simple as a $U_q(\mathfrak{b})$-module.
\end{prop}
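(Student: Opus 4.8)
The plan is to deduce the $U_q(\bo)$-simplicity of $V$ from the stronger assertion that the two images
\[
\rho(U_q(\bo))\subseteq\rho(U_q(\g))\subseteq\mathrm{End}(V)
\]
actually coincide, where $\rho\colon U_q(\g)\to\mathrm{End}(V)$ denotes the representation. Indeed, if $\rho(U_q(\g))=\rho(U_q(\bo))$, then for a $U_q(\bo)$-submodule $W$ one has $\rho(U_q(\g))W=\rho(U_q(\bo))W\subseteq W$, so every $U_q(\bo)$-submodule is automatically $U_q(\g)$-stable; since $V$ is simple over $U_q(\g)$ by hypothesis, it is then simple over $U_q(\bo)$. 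As $U_q(\bo)\subseteq U_q(\g)$, only the inclusion $\rho(U_q(\g))\subseteq\rho(U_q(\bo))$ needs proof.

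Recall that $\rho(U_q(\bo))$, being a subalgebra of $\mathrm{End}(V)$, already contains $\rho(x_{i,m}^+)$ ($m\geq0$), $\rho(x_{i,r}^-)$ ($r>0$), $\rho(\phi_{i,r}^+)$ ($r>0$) and $\rho(k_i^{\pm1})$. It therefore suffices to prove three mode-truncation facts on the finite-dimensional space $V$: for every $i\in I$ and $r\in\Z$,
\[
\rho(x_{i,r}^+)\in\mathrm{span}\{\rho(x_{i,s}^+):s\geq0\},\qquad
\rho(x_{i,r}^-)\in\mathrm{span}\{\rho(x_{i,s}^-):s\geq1\},
\]
together with $\rho(\phi_{i,-m}^-)\in\mathrm{span}\{\rho(\phi_{i,s}^+):s\geq0\}$ for all $m\geq0$. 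These place all the remaining Drinfeld generators of $U_q(\g)$ inside the subalgebra $\rho(U_q(\bo))$, yielding the desired inclusion.

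To establish the facts for $x_i^{\pm}$ I would exploit the Drinfeld relation $[\tilde\phi_{i,1}^+,\tilde x_{i,r}^\pm]=\pm(q_i^2-q_i^{-2})\,\tilde x_{i,r+1}^\pm$ (already used in the Lemma of Section \ref{lhwm}), which after applying $\rho$ and absorbing the invertible factor $\rho(\kappa_i)=\rho(k_i)^{-1}$ reads $\rho(x_{i,r+1}^\pm)=c_i^\pm\,\mathrm{ad}\bigl(\rho(\phi_{i,1}^+)\bigr)\rho(x_{i,r}^\pm)$ with $c_i^\pm\in\C^\times$. Thus the whole bi-infinite family $(\rho(x_{i,r}^\pm))_{r\in\Z}$ is the orbit of a single element under the fixed operator $T_i^\pm=c_i^\pm\,\mathrm{ad}(\rho(\phi_{i,1}^+))$ on $\mathrm{End}(V)$; since $V$ is finite-dimensional, Cayley–Hamilton applied to $T_i^\pm$ on the finite-dimensional span of this orbit produces a linear recurrence $\sum_{k=0}^{d}c_k\,\rho(x_{i,r+k}^\pm)=0$ valid for all $r\in\Z$. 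For the $\phi$-fact the analogous input is the standard property of finite-dimensional modules that $\phi_i^+(z)$ and $\phi_i^-(z)$ act on $V$ as the expansions at $z=0$ and $z=\infty$ of one and the same rational $\mathrm{End}(V)$-valued function $\Psi_i(z)$; both $\mathrm{span}\{\rho(\phi_{i,s}^+):s\geq0\}$ and $\mathrm{span}\{\rho(\phi_{i,-m}^-):m\geq0\}$ then equal its finite-dimensional coefficient space.

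The hard part will be showing that these recurrences have nonzero constant term $c_0$, equivalently that $T_i^\pm$ is invertible on the span of the modes and that the poles of $\Psi_i(z)$ on $V$ lie away from $z=0,\infty$. This is exactly where finite-dimensionality and the normalization $P_i(0)=1$ of the Drinfeld polynomials are indispensable: the $\ell$-weights of $V$ are rational functions of the form $q_i^{\deg P_i}P_i(q_i^{-1}z)/P_i(q_iz)$, regular and non-zero at the origin, so the characteristic frequencies of the above sequences are precisely the (non-zero) pole locations of the currents. Once $c_0\neq0$ is secured, the recurrence can be solved backwards,
\[
c_0\,\rho(x_{i,r}^\pm)=\rho(x_{i,r+d}^\pm)-\sum_{k=1}^{d-1}c_k\,\rho(x_{i,r+k}^\pm),
\]
and a downward induction on $r$ expresses every mode through the admissible non-negative (resp.\ positive) ones, completing the three facts and hence the proposition. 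I expect the verification of this non-vanishing — rather than any of the algebraic manipulations — to be the main obstacle, since it is the single place where the special structure of finite-dimensional highest $\ell$-weight modules is genuinely needed.
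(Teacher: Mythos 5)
Your reduction of the proposition to the image-algebra equality $\rho(U_q(\g))=\rho(U_q(\bo))$, and the mode-truncation facts you target, are sound, and your overall strategy — finite-dimensionality of $\mathrm{End}(V)$ forces a linear relation among the Drinfeld modes, and a $\phi$--$x$ commutation relation shifts the mode index so that the relation propagates — is the same as the paper's. But there is a genuine gap, exactly at the point you flag yourself: you shift indices \emph{upward} via $\mathrm{ad}\bigl(\rho(\tilde{\phi}^+_{i,1})\bigr)$ and must then solve the Cayley--Hamilton recurrence \emph{backwards}, which requires $c_0\neq 0$, and you do not prove this. Your proposed justification (the characteristic frequencies are the pole locations of $\Psi_i(z)$, nonzero since $P_i(0)=1$) is a heuristic, not an argument: the eigenvalues of $\mathrm{ad}\bigl(\rho(\tilde{\phi}^+_{i,1})\bigr)$ on $\mathrm{End}(V)$ are differences of eigenvalues of $\rho(\tilde{\phi}^+_{i,1})$, many of which vanish, and identifying the spectrum of its restriction to the span of the modes with pole locations of the currents is essentially the Beck--Kac quasi-polynomiality property — a much heavier input than the proposition itself (the paper pointedly remarks that its proof yields only a \emph{weak} version of quasi-polynomiality). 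Two smaller defects: your displayed shift formula uses $\mathrm{ad}(\rho(\phi^+_{i,1}))$, but since $\phi^+_{i,1}=k_i\tilde{\phi}^+_{i,1}$ the clean relation $[\tilde{\phi}^+_{i,1},\tilde{x}^\pm_{i,r}]=\pm(q_i^2-q_i^{-2})\tilde{x}^\pm_{i,r+1}$ holds only for $\tilde{\phi}^+_{i,1}$ (for $\phi^+_{i,1}$ one gets a $q$-twisted commutator with a $k_i$ factor); and your third fact — that the $\rho(\phi^-_{i,-m})$ lie in the \emph{span} of the $\rho(\phi^+_{i,s})$ — is dubious as stated and in any case unnecessary, since once the two $x$-facts hold, $\rho(\tilde{\phi}^-_{i,-m})$ lies in the subalgebra $\rho(U_q(\bo))$ by the relation $q_i^{-2}\tilde{x}^+_{i,r}\tilde{x}^-_{i,r'}-\tilde{x}^-_{i,r'}\tilde{x}^+_{i,r}=(\tilde{\phi}^+_{i,r+r'}-\tilde{\phi}^-_{i,r+r'})/(q_i-q_i^{-1})$, and subalgebra membership is all your argument needs.

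The paper sidesteps the non-vanishing issue entirely by shifting \emph{downward}: from $\phi^-_{i,-1}x^\pm_{i,r}-q_i^{\mp2}x^\pm_{i,r}\phi^-_{i,-1}=(q_i^{\mp4}-1)x^\pm_{i,r-1}\phi^-_{i,0}$, with $\phi^-_{i,0}=k_i^{-1}$ invertible on $V$ and $q_i^{\mp4}\neq 1$, any nontrivial relation $\sum_{r=a}^{b}\lambda_r\,\rho(x^\pm_{i,r})=0$ with $a\geq 1$ (which exists since $\mathrm{End}(V)$ is finite-dimensional) propagates \emph{unconditionally} to $\sum_{r=a}^{b}\lambda_r\,\rho(x^\pm_{i,r-R})=0$ for all $R\geq0$; taking $a$ minimal with $\lambda_a\neq0$, downward induction expresses every mode through those of index $\geq a\geq1$, with no spectral condition to verify. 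Alternatively, your own route can be repaired in one line you missed: on $S=\mathrm{span}\{\rho(x^\pm_{i,r}):r\in\ZZ\}$ your shift operator sends the spanning family onto itself (re-indexed, up to nonzero scalars), so it is surjective, hence invertible on the finite-dimensional $S$, whence $c_0=\pm\det\neq0$ automatically — no Drinfeld polynomials required. With either repair your endgame is correct and matches the paper's conclusion, which it phrases equivalently via cyclicity of $V$ over $U_q(\bo)$ and the fact that all primitive vectors are of highest $\ell$-weight.
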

This result was proved in \cite{bt} for $\g=\widehat{\mathfrak{sl}}_2$, 
and in \cite{bo}, \cite[Proposition 2.7]{cg} in the general case. 
For completeness, let us give a short elementary proof of this statement, independent from 
the proof of \cite{bo}.

\begin{proof}
Let $\pi: U_q(\Glie)\rightarrow \mathrm{End}(V)$ be the representation morphism.
Let $i\in I$.
We prove for any $r\in \ZZ$ that $\pi(x_{i,r}^\pm)$ is in the space
linearly spanned by $(\pi(x_{i,m}^\pm))_{m > 0}$. 
Since $\text{End}(V)$ is finite dimensional, there is a non-trivial linear relation
$\sum_{r=a}^b\lambda_r \pi(x_{i,r}^\pm) =0$ with $a\geq 1$.
On the other hand, for any $r\in\ZZ$ we have
\begin{align*}
\phi_{i,-1}^- x_{i,r}^{\pm} - q_i^{\mp 2} 
x_{i,r}^{\pm}\phi_{i,-1}^-
= (q_i^{\mp 4} - 1 )x_{i,r-1}^{\pm} \phi_{i,0}^-\,.
\end{align*}
It follows that
for any $R\ge 0$ we have $\sum_{r=a}^{b}\lambda_r \pi(x_{i,r-R}^\pm) =0$.
Now it is clear that $V$ is cyclic as a $U_q(\mathfrak{b})$-module generated by a highest $\ell$-weight vector, 
and that all primitive vectors are of highest $\ell$-weight.
\end{proof}
Note that in the proof, we proved a weak version of the quasi-polynomiality property 
(see \cite[Proposition 6.2]{beckac}, \cite[Proposition 3.8]{lms}). 

\begin{rem}\label{mult} 
From the relation (\ref{h}), the submodule of $L(\Psib)\otimes L(\Psib')$ generated by the tensor product
of the highest $\ell$-weight vectors is of highest $\ell$-weight $\Psib\Psib'$. In particular, $L(\Psib\Psib')$
is a subquotient of $L(\Psib)\otimes L(\Psib')$.
\end{rem}

\begin{defi}
For $i\in I$ and $a\in\CC^\times$, let 
\begin{align}
L_{i,a}^\pm = L(\Psib)
\quad \text{where}\quad 
\Psi_j(z) = \begin{cases}
(1 - za)^{\pm 1} & (j=i)\,,\\
1 & (j\neq i)\,.\\
\end{cases} 
\label{fund-rep}
\end{align}
The representations $L_{i,a}^\pm$ ($i\in I, a\in\CC^\times$)  
are called fundamental representations. 
\end{defi}

For $a\in\CC^\times$, we have an algebra automorphism 
$\tau_a:U_q(\g)\longrightarrow U_q(\g)$
such that
\begin{align}
\tau_a(x_{i,m}^\pm) = a^m x_{i,m}^\pm\,, 
\quad
\tau_a(\phi_i^\pm(z)) =  \phi_{i}^\pm(az)\,.
\label{tau-a}
\end{align}
The subalgebra $U_q(\mathfrak{b})$ is stable by $\tau_a$.
Denote its restriction to $U_q(\mathfrak{b})$ by the same letter.
Then the pullbacks of the  $U_q(\mathfrak{b})$-modules $L_{i,b}^\pm$ by $\tau_a$ 
is $L_{i,ab}^\pm$. 

We remark that for $a = 0$, we get an algebra morphism
$\tau_0:U_q(\mathfrak{b})\longrightarrow U_q(\mathfrak{b})$
which is not invertible. Its image is the subalgebra 
$U_q(\dot{\mathfrak{b}})$ of $U_q(\mathfrak{b})$
generated by the $x_{i,0}^-$, $k_i^{\pm 1}$ ($i\in I$), 
that is the
Borel algebra of the quantum algebra of finite type 
associated to $U_q(\gb)$. The pullback of the $1$-dimensional simple
representations of $U_q(\dot{\mathfrak{b}})$ are the 
representations $L(\Psib_\omega)$.

\subsection{Category $\mathcal{O}$}\label{precato}

For $\lambda\in \tb^*$, we set $D(\lambda )=
\{\omega\in \tb^* \mid \omega\leq\lambda\}$.

\begin{defi} A $U_q(\mathfrak{b})$-module $V$ 
is said to be in category $\mathcal{O}$ if:

i) $V$ is $\tb$-diagonalizable,

ii) for all $\omega\in \tb^*$ we have 
$\dim (V_{\omega})<\infty$,

iii) there exist a finite number of elements 
$\lambda_1,\cdots,\lambda_s\in \tb^*$ 
such that the weights of $V$ are in 
$\underset{j=1,\cdots, s}{\bigcup}D(\lambda_j)$.
\end{defi}

The category $\mathcal{O}$ is a tensor category. 
In general a simple highest $\ell$-weight module is not necessarily 
in category $\mathcal{O}$. 

\begin{lem}\label{if} Let $V$ be a $U_q(\mathfrak{b})$-module, $\omega\in \tb^*$
and $i\in I$. We suppose that $V_\omega$, 
$V_{\omega \overline{\alpha}_i}$ and 
$V_{\omega \overline{\alpha}_i^{-1}}$
are finite-dimensional. Then for $\Psib\in \tb^*_\ell$ such that $\varpi(\Psib) = \omega$
and $V_{\Psibs}\neq 0$, $\Psi_i(z)$ is rational.
\end{lem}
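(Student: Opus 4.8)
The plan is to reduce the problem to the rank-one situation attached to the fixed index $i$, and to exploit the finite-dimensionality of the three adjacent weight spaces $V_\omega$, $V_{\omega\ga_i}$, $V_{\omega\ga_i^{-1}}$. First I would observe that any nonzero $v\in V_{\Psibs}$ is a generalized eigenvector of $k_i$ for the eigenvalue $\omega(i)=\Psi_{i,0}$; assuming $V$ is $\tb$-diagonalizable (as is the case in all situations of interest, and in general one argues with generalized weight spaces) this gives $V_{\Psibs}\subseteq V_\omega$. Since the $\phi_{i,n}^+$ mutually commute, $V_{\Psibs}$ is stable under each of them, and $\phi_{i,n}^+$ has the single generalized eigenvalue $\Psi_{i,n}$ there, so that $\Psi_{i,n}=\tfrac{1}{\dim V_{\Psibs}}\,\mathrm{tr}\bigl(\phi_{i,n}^+|_{V_{\Psibs}}\bigr)$. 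It therefore suffices to exhibit a linear recurrence with constant coefficients, valid for all large $n$, satisfied by the operators $\phi_{i,n}^+$ restricted to the finite-dimensional space $V_\omega$: restricting that recurrence to $V_{\Psibs}$ and taking traces yields the same recurrence for the scalars $\Psi_{i,n}$, and a power series whose coefficients eventually obey such a recurrence is exactly the Taylor expansion at $0$ of a rational function.

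The heart of the matter is to show that the sequence $(x_{i,m}^+)_{m\geq 0}$ satisfies a linear recurrence once viewed inside a finite-dimensional Hom-space. For this I would use the relation $[\tilde{\phi}_{i,1}^+,x_{i,m}^+]=(q_i^2-q_i^{-2})\,x_{i,m+1}^+$ already recorded in the excerpt, noting that $\tilde{\phi}_{i,1}^+=k_i^{-1}\phi_{i,1}^+$ lies in $U_q(\mathfrak{b})$ and preserves every weight space. Writing $X_m:=x_{i,m}^+|_{V_\omega}\in\mathrm{Hom}(V_\omega,V_{\omega\ga_i})$ and letting $\Theta_0,\Theta_1$ be the finite-dimensional operators induced by $\tilde{\phi}_{i,1}^+$ on $V_\omega$ and $V_{\omega\ga_i}$, the relation becomes $X_{m+1}=T(X_m)$ with $T(X):=\tfrac{1}{q_i^2-q_i^{-2}}\bigl(\Theta_1 X-X\Theta_0\bigr)$. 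Hence $X_m=T^m(X_0)$, and as $T$ is a linear endomorphism of the finite-dimensional space $\mathrm{Hom}(V_\omega,V_{\omega\ga_i})$, the Cayley--Hamilton theorem forces $(X_m)_{m\geq 0}$ to satisfy a linear recurrence with constant coefficients. The same computation with $V_{\omega\ga_i^{-1}}$ in place of $V_{\omega\ga_i}$ shows that $x_{i,m}^+$, viewed in $\mathrm{Hom}(V_{\omega\ga_i^{-1}},V_\omega)$, obeys a linear recurrence as well.

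To pass from $x^+$ to $\phi^+$ I would invoke the Drinfeld commutation relation (a consequence of the defining relation of $\Utg$ via \eqref{Dri-gen}), namely $[x_{i,m}^+,x_{i,1}^-]=\dfrac{\phi_{i,m+1}^+-\phi_{i,m+1}^-}{q_i-q_i^{-1}}$; since $\phi_{i,m+1}^-=0$ for $m\geq 0$ this reads $\phi_{i,m+1}^+=(q_i-q_i^{-1})\bigl(x_{i,m}^+x_{i,1}^- - x_{i,1}^-x_{i,m}^+\bigr)$. I use $x_{i,1}^-$ rather than $x_{i,0}^-$ precisely because $x_{i,0}^-$ need not lie in $U_q(\mathfrak{b})$, whereas $x_{i,1}^-\in U_q(\mathfrak{b})$ is a fixed operator. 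On $V_\omega$ the first product uses $x_{i,m}^+\in\mathrm{Hom}(V_{\omega\ga_i^{-1}},V_\omega)$ precomposed with $x_{i,1}^-\colon V_\omega\to V_{\omega\ga_i^{-1}}$, and the second uses $x_{i,m}^+\in\mathrm{Hom}(V_\omega,V_{\omega\ga_i})$ postcomposed with $x_{i,1}^-\colon V_{\omega\ga_i}\to V_\omega$; each is a linear image of a recurrence-satisfying sequence, hence obeys a linear recurrence, and so does their difference. Thus $\phi_{i,n}^+|_{V_\omega}$ satisfies a linear recurrence with constant coefficients for all large $n$, and by the first paragraph $\Psi_i(z)$ is rational.

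I expect the main obstacle to be the bookkeeping in the middle step: one must check that $T$ is a genuine fixed linear endomorphism of a finite-dimensional Hom-space, which is exactly where the hypotheses on $V_{\omega\ga_i}$ and $V_{\omega\ga_i^{-1}}$ enter, and one must treat separately the two distinct slots in which $x_{i,m}^+$ occurs before combining their recurrences. The only other delicate point is the inclusion $V_{\Psibs}\subseteq V_\omega$, which is immediate under $\tb$-diagonalizability but in full generality should be phrased via generalized weight spaces, with the finite-dimensionality understood accordingly.
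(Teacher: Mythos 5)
Your proof is correct, and it takes a genuinely different route from the paper's, although both pivot on the same rank-one Drinfeld commutator and on the finite-dimensionality of exactly the three spaces $V_\omega$, $V_{\omega\ga_i}$, $V_{\omega\ga_i^{-1}}$. The paper works with a single exact joint eigenvector $v\in V_{\Psibs}$ (it asserts its existence; such a $v$ automatically lies in $V_\omega$, which sidesteps your diagonalizability caveat) and a projection $\pi:V_\omega\to\CC v$: expanding the matrix entries of $x^+_{i,m}$ and $x^-_{i,p}$ (with $p\geq 1$, for the same reason you replace $x^-_{i,0}$ by $x^-_{i,1}$) against bases of $V_{\omega\ga_i^{\pm1}}$, the identity $(q_i-q_i^{-1})[x^+_{i,m},x^-_{i,p}]v=\Psi_{i,m+p}v$ yields $\Psi_{i,m+p}=\sum_k\lambda_{p,k}\mu'_{m,k}-\sum_j\mu_{m,j}\lambda'_{p,j}$, so every tail series $\Psi_i^{>m}(z)$ lies in the span of the finitely many fixed series $\lambda_k(z),\lambda'_j(z)$; a linear dependence among the tails then gives rationality directly, with no recurrence for the $x^+_{i,m}$ themselves. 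You instead manufacture such a recurrence by Cayley--Hamilton applied to your operator $T$ on $\mathrm{Hom}(V_\omega,V_{\omega\ga_i})$ (and its companion on $\mathrm{Hom}(V_{\omega\ga_i^{-1}},V_\omega)$), using the shift relation $[\tilde{\phi}^+_{i,1},x^+_{i,m}]=(q_i^2-q_i^{-2})x^+_{i,m+1}$ --- which the paper does use, but only in the earlier lemma showing that highest $\ell$-weight vectors are killed by all $x^+_{i,m}$ --- and then transfer to $\phi^+_{i,n}$ via $[x^+_{i,m},x^-_{i,1}]$ and extract scalars by a trace over $V_{\Psibs}$. Each route buys something: the paper's dimension count is leaner and bounds the denominator degree of $\Psi_i(z)$ by $\dim V_{\omega\ga_i}+\dim V_{\omega\ga_i^{-1}}$, whereas your recurrence order is only bounded by sums of products of dimensions of weight spaces; conversely, your trace device needs no exact joint eigenvector and is insensitive to a possibly non-semisimple action of the $\phi^+_{i,m}$ on $V_{\Psibs}$, so it makes explicit a point the paper leaves implicit. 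The caveat you flag is real but harmless: the inclusion $V_{\Psibs}\subseteq V_\omega$ (whence $V_{\Psibs}$ is finite-dimensional, stable under the commuting $\phi^+_{i,n}$, and $\mathrm{tr}\bigl(\phi^+_{i,n}|_{V_{\Psibs}}\bigr)=\dim(V_{\Psibs})\,\Psi_{i,n}$) holds in the $\tb$-diagonalizable setting where the lemma is applied, exactly as the paper's choice of $v$ does.
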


\begin{proof}
The proof is a generalization of the proof of \cite[Lemma 14]{h2}.

There is a non zero $v\in V_{\Psibs}$ such that $\phi_{i,m}^+v = \Psi_{i,m}v$
for any $i\in I$, $m\geq 0$. 
Choose a basis $(u_1,\cdots,u_R)$ 
of $V_{\omega \overline{\alpha}_i^{-1}}$, a basis 
$(v_1,\cdots,v_P)$ of $V_{\omega \overline{\alpha}_i}$, 
and a linear map $\pi : V_\omega\rightarrow \CC v$
such that $\pi(v)=v$. 

For $m\geq 0$ and $p\geq 1$, we can find 
$\lambda_{p,j},\lambda_{p,j}',\mu_{m,j}, \mu_{m,j}'\in\mathbb{C}$ so that
\begin{align*}
&x_{i,p}^-v=\sum_{k=1}^R\lambda_{p,k}u_k,\quad
(q_i - q_i^{-1})\pi\bigl(x_{i,p}^-v_j\bigr) 
=\lambda_{p,j}' v\quad (1\le j\le P)\,,
\\
&x_{i,m}^+v=\sum_{j=1}^P\mu_{m,j}v_j,\quad
(q_i - q_i^{-1})\pi \bigl(x_{i,m}^+u_k\bigr)
=\mu_{m,k}' v\quad(1\le k\le R).
\end{align*}
Using 
$(q_i - q_i^{-1})[x_{i,m}^+,x_{i,p}^-]v=\Psi_{i,m+p}v$
and applying $\pi$ on both sides 
we obtain 
\begin{align*}
\Psi_{i,m+p} = 
\sum_{k=1}^R\lambda_{p,k}\mu'_{m,k}-\sum_{j=1}^P\mu_{m,j}\lambda_{p,j}'\,.
\end{align*}

We set $\lambda_k(z)=\underset{p\geq 1}{\sum}\lambda_{p,k} z^p$, 
$\lambda_j'(z)=\underset{p\geq 1}{\sum}\lambda_{p,j}' z^p$ and 
\begin{align*}
\Psi_i^{> m}(z) = z^{-m}\bigl(\Psi_i(z) - \sum_{p=0}^{m}\Psi_{i,p} z^p\bigr)
\quad (m\ge 0)\,.
\end{align*}
Then for $m \ge 0$ we have 
$$
\Psi_i^{ > m}(z)= 
\sum_{k=1}^R\lambda_k(z)\mu_{m,k}' -  
\sum_{j=1}^P\lambda'_j(z)\mu_{m,j}.
$$ 
Hence $(\Psi_i^{ > m}(z))_{m \ge 0}$ is not linearly independent,  and 
we have a relation of the form 
\begin{align*}
\sum_{m=0}^Na_m z^{N-m}
\bigl(\Psi_i(z) - \sum_{p=0}^{m}\Psi_{i,p} z^p\bigr)=0.
\end{align*}
This shows that $\Psi_i(z)$ is rational.
\end{proof}

As a direct consequence, we have the following.

\begin{prop}\label{rat} Let $V$ be in category $\mathcal{O}$. 
Then for $\Psib\in \tb^*_\ell$ such that
$V_{\Psibs}\neq 0$, $\Psi_i(z)$ is rational for any $i\in I$.
\end{prop}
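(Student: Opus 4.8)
The plan is to deduce Proposition \ref{rat} directly from Lemma \ref{if}, since the lemma already does all the analytic work of producing a linear recursion among the coefficients of $\Psi_i(z)$. The only thing that needs checking is that the hypotheses of the lemma are automatically satisfied whenever $V$ lies in category $\mathcal{O}$. So the strategy is: fix $V$ in category $\mathcal{O}$, fix $\Psib\in\tb^*_\ell$ with $V_{\Psibs}\neq 0$, and fix $i\in I$; then I would verify the three finite-dimensionality conditions required to invoke Lemma \ref{if} with $\omega=\varpi(\Psib)$.

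First I would observe that by condition (ii) in the definition of category $\mathcal{O}$, every weight space $V_{\omega'}$ is finite-dimensional for every $\omega'\in\tb^*$. In particular the three spaces $V_\omega$, $V_{\omega\ga_i}$ and $V_{\omega\ga_i^{-1}}$ appearing in the hypothesis of Lemma \ref{if} are all finite-dimensional, with no further argument needed. Next I would note that since $V_{\Psibs}\neq 0$ and $\varpi(\Psib)=\omega$, the $\ell$-weight space sits inside the ordinary weight space $V_\omega$; because $V$ is $\tb$-diagonalizable by condition (i), the vector realizing $V_{\Psibs}\neq 0$ is indeed a genuine generalized eigenvector for the $\phi_{i,m}^+$ inside a finite-dimensional $V_\omega$, so the setup of Lemma \ref{if} applies verbatim.

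With the hypotheses verified, the conclusion of Lemma \ref{if} gives immediately that $\Psi_i(z)$ is rational. Since $i\in I$ was arbitrary, this holds for every $i$, which is exactly the assertion of the proposition. I would present this as a one-line deduction: for each $i$, the conditions of Lemma \ref{if} are met because all weight spaces of a module in category $\mathcal{O}$ are finite-dimensional, hence $\Psi_i(z)$ is rational.

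There is essentially no obstacle here, which is why the statement is labelled a direct consequence; the entire difficulty was front-loaded into Lemma \ref{if}, whose proof produces the finite linear dependence among the shifted tails $\Psi_i^{>m}(z)$ by factoring $[x_{i,m}^+,x_{i,p}^-]$ through the finite-dimensional spaces $V_{\omega\ga_i^{\pm1}}$ and projecting back to the highest $\ell$-weight line. The only point worth a moment's care in writing the proof of the proposition is making explicit that condition (ii) of category $\mathcal{O}$ supplies finite-dimensionality not just for one weight but simultaneously for the triple $\omega$, $\omega\ga_i$, $\omega\ga_i^{-1}$; once that is said, invoking Lemma \ref{if} closes the argument.
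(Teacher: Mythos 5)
Your proof is correct and is exactly the paper's route: the paper states Proposition \ref{rat} as ``a direct consequence'' of Lemma \ref{if}, the deduction being precisely your observation that condition (ii) of category $\mathcal{O}$ makes $V_\omega$, $V_{\omega\ga_i}$ and $V_{\omega\ga_i^{-1}}$ finite-dimensional for $\omega=\varpi(\Psib)$, so the lemma applies for each $i\in I$. Your remark that $\tb$-diagonalizability places $V_{\Psibs}$ inside $V_\omega$ is a fine (if implicit in the paper) point of care, and nothing further is needed.
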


The following is one of the main results of this paper. It is a complete classification
of simple objects in category $\mathcal{O}$.

\begin{thm}\label{class} For $\Psib\in\tb^*_\ell$, 
the simple module $L(\Psib)$ is in category 
$\mathcal{O}$ if and only if $\Psi_i(z)$ 
is rational for any $i\in I$.
\end{thm}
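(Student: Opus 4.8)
The plan is to prove Theorem \ref{class} by establishing both implications. The forward direction ($V=L(\Psib)$ in category $\mathcal{O}$ $\Rightarrow$ each $\Psi_i(z)$ rational) is already delivered by Proposition \ref{rat}: if $L(\Psib)$ lies in $\mathcal{O}$, then in particular its highest $\ell$-weight space $L(\Psib)_{\Psibs}$ is non-zero (it contains the highest $\ell$-weight vector), so Proposition \ref{rat} immediately forces each $\Psi_i(z)$ to be rational. Thus the entire content of the theorem lies in the converse: assuming each $\Psi_i(z)$ is a rational function (automatically regular and non-zero at $z=0$, since $\Psi_i(z)$ is an invertible power series), I must show $L(\Psib)\in\mathcal{O}$.

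\medskip

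\emph{The converse direction} is where the real work lives, and my strategy would be to reduce the general rational case to the fundamental modules $L^\pm_{i,a}$, whose membership in $\mathcal{O}$ is the hard analytic input supplied by the asymptotic constructions of Sections \ref{asymptminus}--\ref{asymptpplus}. First I would observe that any rational function $\Psi_i(z)$ that is regular and non-zero at the origin factors as a finite product of elementary pieces $(1-az)^{\pm1}$ with $a\in\CC^\times$. Consequently the whole $\ell$-weight $\Psib$ can be written as a finite product $\Psib=\prod_k \Psib^{(k)}$, where each factor $\Psib^{(k)}$ is the highest $\ell$-weight of some fundamental module $L^\pm_{i_k,a_k}$ (using the automorphism $\tau_{a}$ from \eqref{tau-a} to place the spectral parameter, and the one-dimensional twists $L(\Psib_\omega)$ to adjust the constant terms $\Psi_{i,0}$ as in Remark \ref{grading-shift}). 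By Remark \ref{mult}, $L(\Psib)$ is then a subquotient of the tensor product $\bigotimes_k L^\pm_{i_k,a_k}$.

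\medskip

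\emph{The key structural step} is then to invoke that $\mathcal{O}$ is a tensor category (stated right after the definition of category $\mathcal{O}$): since each fundamental module $L^\pm_{i,a}$ belongs to $\mathcal{O}$ (this is precisely what the asymptotic construction of the later sections establishes), the finite tensor product $\bigotimes_k L^\pm_{i_k,a_k}$ lies in $\mathcal{O}$ as well. It remains to check that $\mathcal{O}$ is closed under passing to subquotients: condition (i) ($\tb$-diagonalizability) is inherited by subquotients of $\tb$-diagonalizable modules, the finiteness of weight spaces (ii) is inherited because a subquotient has weight spaces of no larger dimension, and the condition (iii) on the cone of weights is inherited since the weights of a subquotient form a subset of the original weights. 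Hence $L(\Psib)\in\mathcal{O}$, completing the proof.

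\medskip

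\emph{The main obstacle} I anticipate is not in this reduction — which is essentially formal once the tensor-category and subquotient-closure properties are in hand — but rather in the fact that the proof genuinely depends on the deep result that the fundamental modules $L^\pm_{i,a}$ lie in $\mathcal{O}$. That membership is exactly what the asymptotic limit construction of the Kirillov--Reshetikhin modules (Sections \ref{asymptminus}--\ref{asymptpplus}) is designed to prove, and it cannot be bypassed here; the classification theorem is stated early but its proof is deferred precisely because it rests on those later sections. A secondary technical point worth verifying carefully is that every rational $\Psi_i(z)$ regular and non-zero at the origin really does arise from products of the fundamental $\ell$-weights together with the one-dimensional $\Psib_\omega$ twists, including the possibility of a zero or pole at $z=\infty$ (which corresponds to an imbalance between numerator and denominator degrees and is accommodated precisely by allowing both $L^+_{i,a}$ and $L^-_{i,a}$ factors).
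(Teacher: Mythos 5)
Your proposal is correct and follows essentially the same route as the paper's own proof: the ``only if'' part via Proposition \ref{rat}, and the converse by writing $\Psib$ as a product of fundamental $\ell$-weights so that $L(\Psib)$ is, by Remark \ref{mult}, a subquotient of a tensor product of fundamental modules $L^{\pm}_{i,a}$, reducing via the twist \eqref{tau-a} to $L^{\pm}_{i,1}$, whose membership in category $\mathcal{O}$ is exactly what Corollaries \ref{Lmin} and \ref{Lpl} supply. Your explicit factorization of each rational $\Psi_i(z)$ into elementary factors $(1-az)^{\pm 1}$, with constant terms absorbed by one-dimensional twists $L(\Psib_\omega)$, and your verification that $\mathcal{O}$ is closed under subquotients, simply spell out steps the paper leaves implicit.
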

\begin{proof}
The ``only if" part follows directly from Proposition \ref{rat}.

From Remark \ref{mult}, 
an arbitrary simple representation in category $\mathcal{O}$ is a subquotient of a tensor product
of fundamental representations. 
So, to prove the ``if'' part, it suffices to show that all fundamental representations
are in category $\mathcal{O}$.
Furthermore, with the aid of the twist automorphism \eqref{tau-a}, 
the proof is reduced to the case of $L_{i,1}^\pm$. 

In the next sections we shall show that $L_{i,1}^\pm$ are indeed in category $\mathcal{O}$
(see Corollary \ref{Lmin} and Corollary \ref{Lpl}).
\end{proof}

\subsection{$q$-characters in category $\mathcal{O}$}\label{qcharo}

\newcommand{\mfr}{\mathfrak{r}}

Let 
$\mfr$
be the subgroup of $\tb^*_\ell$
consisting of $\Psib$ such that 
$\Psi_i(z)$ is rational for any $i\in I$.
(The letter $\mfr$  
stands for `rational'.)

Let 
$\mathcal{E}_\ell\subset \Z^{\mfr}$
be the ring of maps
$c : \mfr\rightarrow \ZZ$  
satisfying $c(\Psib) = 0$ for all 
$\Psib$ such that $\varpi(\Psib)$ is outside a finite union
of sets of the form $D(\mu)$ and such that for 
each $\omega\in \tb^*$, there are finitely many $\Psib$
such that $\varpi(\Psib) = \omega$ and $c(\Psib)\neq 0$.
Similarly, let $\mathcal{E}\subset \Z^{\tb^*}$ 
be the ring of maps $c : \tb^* \rightarrow \ZZ$ 
satisfying 
$c(\omega) = 0$ for all $\omega$ outside 
a finite union of sets of the form $D(\mu)$. 
The map $\varpi$ is naturally extended to a surjective ring morphism 
$\varpi : \mathcal{E}_\ell\rightarrow \mathcal{E}$. 

For $\Psib\in\mfr$  
(resp. $\omega\in\tb^*$), we define 
$[\Psib] = \delta_{\Psibs,.}\in\mathcal{E}_\ell$ 
(resp. $[\omega] = \delta_{\omega,.}\in\mathcal{E}$).

Let $V$ be a $U_q(\mathfrak{b})$-module in category $\mathcal{O}$. 
We define the $q$-character of $V$ to be 
the element of $\mathcal{E}_\ell$ 
\begin{align}
\chi_q(V) = 
\sum_{\Psibs\in\mfr}  
\mathrm{dim}(V_{\Psibs}) [\Psib]\,.
\label{qch}
\end{align}
Similarly we define the ordinary character of $V$ to be
an element of $\mathcal{E}$
\begin{align}
\chi(V) = \varpi(\chi_q(V)) =  \sum_{\omega\in\tb^*} 
\text{dim}(V_\omega) [\omega]\,.
\label{ch}
\end{align}
For $V$ in category $\mathcal{O}$ which has a unique $\ell$-weight $\Psib$
whose weight is maximal (for example a highest $\ell$-weight module),
we also consider its normalized $q$-character $\tilde{\chi}_q(V)$
and normalized character $\tilde{\chi}(V)$ by
\begin{align*}
\tilde{\chi}_q(V) = [\Psib^{-1}]\cdot\chi_q(V)\,,
\quad 
\tilde{\chi}(V) = \varpi(\tilde{\chi}_q(V))\,.
\end{align*}

Let $\text{Rep}(U_q(\mathfrak{b}))$ be the Grothendieck ring of the
category $\mathcal{O}$.  
We define the $q$-character morphism as the group morphism
$$
\chi_q : \text{Rep}(U_q(\mathfrak{b}))\rightarrow \mathcal{E}_\ell
$$
which sends a class of a representation $V$ to $\chi_q(V)$.
The map is well-defined as $\chi_q$ is clearly 
compatible with exact sequences.

By using Theorem \ref{apco}, we can prove the following as in 
\cite[Lemma 3]{Fre} and \cite[Theorem 3]{Fre}.

\begin{prop} The $q$-character morphism is an injective ring morphism.
\end{prop}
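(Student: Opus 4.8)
The plan is to prove the two assertions separately: that $\chi_q$ respects the ring structure, and that it is injective. For the ring morphism property, recall that the product in $\mathcal{E}_\ell$ is the convolution dual to the group law of $\tb^*_\ell$, so that $[\Psib]\cdot[\Psib'] = [\Psib\Psib']$; since $\chi_q$ is additive and sends the class of $V$ to $\chi_q(V)$, it suffices to prove multiplicativity $\chi_q(V\otimes W) = \chi_q(V)\,\chi_q(W)$ for $V,W$ in category $\mathcal{O}$. Concretely this amounts to the identity $\dim\bigl((V\otimes W)_{\Psib}\bigr) = \sum_{\Psib'\Psib'' = \Psib}\dim(V_{\Psib'})\dim(W_{\Psib''})$ for every $\Psib\in\mfr$, which I would deduce from formula (\ref{h}) of Theorem \ref{apco} governing the coproduct of the $\phi_{i,r}^+$.

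The key structural input is a triangular filtration on each weight space of $V\otimes W$. Fix a total weight $\omega$ and decompose $(V\otimes W)_\omega = \bigoplus_{\omega'\omega''=\omega} V_{\omega'}\otimes W_{\omega''}$, which is finite-dimensional by the category $\mathcal{O}$ axioms. Filter it by the first factor's weight $\omega'$, with deeper pieces corresponding to smaller $\omega'$. By (\ref{h}), $\Delta(\phi_{i,r}^+)$ equals $\sum_{0\le l\le r}\phi_{i,l}^+\otimes\phi_{i,r-l}^+$ up to a correction lying in $\U_q^-(\Glie)\otimes\U_q^+(\Glie)$. The leading term preserves each summand $V_{\omega'}\otimes W_{\omega''}$, while the correction is strictly lower triangular: its second tensor factor raises the weight of $W$ by a nontrivial product of the $\ga_i$, so, the total weight being preserved, its first factor strictly lowers $\omega'$. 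Consequently all the commuting operators $\phi_{i,r}^+$ act block-upper-triangularly, and on the associated graded they act through the leading term. On a tensor $V_{\Psib'}\otimes W_{\Psib''}$ of generalized $\ell$-weight spaces this leading action has the single generalized $\ell$-weight $\Psib'\Psib''$, since the generating series of eigenvalues multiply. Comparing dimensions of generalized $\ell$-weight spaces before and after passing to the associated graded yields the desired multiplicativity.

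For injectivity I would use that $\{[L(\Psib)]\}_{\Psib\in\mfr}$ is a basis of $\text{Rep}(U_q(\mathfrak{b}))$, so it is enough that the family $\{\chi_q(L(\Psib))\}_{\Psib\in\mfr}$ be linearly independent in $\mathcal{E}_\ell$. Each $\chi_q(L(\Psib))$ is unitriangular: by the Lemma of Section \ref{lhwm}, $L(\Psib) = \bigoplus_{\lambda\le\varpi(\Psib)}L(\Psib)_\lambda$ with one-dimensional top weight space of $\ell$-weight $\Psib$, hence $\chi_q(L(\Psib)) = [\Psib] + \sum c_{\Psib'}[\Psib']$ where every occurring $\Psib'$ satisfies $\varpi(\Psib')<\varpi(\Psib)$. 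Given a hypothetical nonzero finite relation $\sum_j d_j\chi_q(L(\Psib^{(j)})) = 0$ with distinct $\Psib^{(j)}$, I would choose $j_0$ with $\varpi(\Psib^{(j_0)})$ maximal among the $\varpi(\Psib^{(j)})$. The basis element $[\Psib^{(j_0)}]$, of weight $\varpi(\Psib^{(j_0)})$, can appear in $\chi_q(L(\Psib^{(j)}))$ only when $\varpi(\Psib^{(j_0)})\le\varpi(\Psib^{(j)})$, hence only when $\varpi(\Psib^{(j)})=\varpi(\Psib^{(j_0)})$, and then only at the top weight, forcing $\Psib^{(j)}=\Psib^{(j_0)}$. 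Thus the coefficient of $[\Psib^{(j_0)}]$ equals $d_{j_0}\neq 0$, a contradiction.

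The routine parts are the additivity of $\chi_q$ and the fact that the simple classes form a basis. The main obstacle will be the first half: making the filtration argument rigorous in the infinite-dimensional setting of category $\mathcal{O}$. One must check that generalized $\ell$-weight decompositions exist and are compatible with the filtration on each finite-dimensional weight space, and, most delicately, verify that the leading coproduct term $\sum_l\phi_{i,l}^+\otimes\phi_{i,r-l}^+$ genuinely carries the product $\ell$-weight $\Psib'\Psib''$ on a tensor of generalized eigenvectors; this is precisely where Theorem \ref{apco} and the exact shape of (\ref{h}) are indispensable.
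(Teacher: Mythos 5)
Your proof is correct and is essentially the paper's own: the paper gives no details but simply says the result is proved ``as in \cite[Lemma 3]{Fre} and \cite[Theorem 3]{Fre}'' using Theorem \ref{apco}, and your two steps --- multiplicativity via the triangular filtration of $(V\otimes W)_\omega$ by the first factor's weight, with the correction term of (\ref{h}) in $\U_q^-(\Glie)\otimes\U_q^+(\Glie)$ acting strictly triangularly so that the associated graded carries the product $\ell$-weight $\Psib'\Psib''$, and injectivity via the unitriangularity $\chi_q(L(\Psib))=[\Psib]+\sum_{\varpi(\Psib')<\varpi(\Psib)}c_{\Psib'}[\Psib']$ together with a maximal-weight extraction --- are precisely the Frenkel--Reshetikhin arguments transplanted to category $\mathcal{O}$. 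The only point worth flagging is that you should make explicit the convention on $\mathrm{Rep}(U_q(\mathfrak{b}))$ that classes of simples span it (with the support conditions of $\mathcal{E}_\ell$ guaranteeing maximal elements exist even for infinite, locally finite combinations), which is implicit in the paper's definition of the Grothendieck ring.
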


\begin{rem} 
As $\mathcal{E}_\ell$ is clearly a commutative ring, this implies that 
$\text{Rep}(U_q(\mathfrak{b}))$ is commutative. The argument 
is the same as for the commutativity 
of the Grothendieck ring $\text{Rep}(U_q(\Glie))$ 
of finite-dimensional modules 
of $U_q(\Glie)$ given in \cite{Fre}. Note that $\text{Rep}(U_q(\Glie))$
is naturally a subring of $\text{Rep}(U_q(\mathfrak{b}))$. 
The category $\mathcal{O}$ is not braided
(it is easy to construct a counter-example by using the category of finite 
dimensional representations of $U_q(\Glie)$ which is known 
 to be not braided). 
Moreover, in contrast to the case of quantum affine algebras, 
no meromorphic $R$-matrix (in the sense of \cite{ks})
is known for $U_q(\mathfrak{b})$. 
That is why the commutativity
is a bit more surprising in this context.
\end{rem}

\subsection{Finite dimensional representations of $U_q(\Glie)$}

In this subsection we quote some results 
for finite-dimensional $U_q(\Glie)$-modules
which will be used in subsequent sections. 
For more details, the reader may refer to the book \cite{Cha2} 
and to the recent review papers \cite{ch, le}.
We consider only representations  
of type $1$, namely such that the eigenvalues of the
$k_i$ ($i\in I$) are in $q^{\ZZ}$.

We have reminded above the parametrization of simple irreducible
$U_q(\Glie)$-modules by $n$-tuples $(P_i(z))_{i\in I}$ of polynomials
of constant term $1$. 

Following \cite{Fre}, 
consider the ring of Laurent polynomials
$\Yim = \ZZ[Y_{i,a}^{\pm 1}]_{i\in I,a\in\CC^*}$ 
in the indeterminates $\{Y_{i,a}\}_{i\in I, a\in \C^*}$.
Let  
$\mathcal{M}$ be the group of monomials of $\Yim$. 
For a monomial 
$m = \prod_{i\in I, a\in\CC^*}Y_{i,a}^{u_{i,a}}$, 
we consider its `evaluation on $\phi^+(z)$'. 
By definition it is an element 
$m(\phi(z))\in\mfr$  
given by
\begin{align*}
&m\bigl(\phi(z))=
\prod_{i\in I, a\in\CC^*}
\left(Y_{i,a}(\phi(z))\right)^{u_{i,a}},
\end{align*}
where
\begin{align*}
&\Bigl(Y_{i,a}\bigl(\phi(z)\bigr)\Bigr)_j
=\begin{cases}
\displaystyle{q_i\frac{1-a q_i^{-1}z}{1-aq_iz}}& (j=i),\\
1 & (j\neq i).\\
\end{cases}
\end{align*}
This defines an injective group morphism 
$\mathcal{M}\rightarrow \mfr$. 
We identify a monomial $m\in\mathcal{M}$ with its image in 
$\mfr$. 
Note that $\varpi(Y_{i,a}) = \overline{\omega_i}$.

It is proved in \cite{Fre} that a finite-dimensional
$U_q(\Glie)$-module $V$ satisfies 
$V = \bigoplus_{m\in\mathcal{M}} V_{m\left(\phi(z)\right)}$.
In particular, $\chi_q(V)$ can be viewed  
as an element of $\Yim$.

Note that for $V$ a finite dimensional $U_q(\Glie)$-module 
and $\Psib\in\mfr$,  
the $\ell$-weight spaces \eqref{l-wtsp} 
can be characterized alternatively as \cite{Fre} 
$$
V_{\Psibs} = \{v\in V\mid \exists p\geq 0, 
\forall i\in I, \forall m\geq 0, (\phi_{i,-m}^- - \Psi_{i,-m}^-)^pv = 0\}\,.
$$ 
Here 
$\Psi_i^-(z^{-1}) = \sum_{m\geq 0}\Psi_{i,-m}^-z^{-m}$ is 
the expansion of $\Psi_i(z)\in\CC(z)$ in $z^{-1}$.

A monomial $M\in\mathcal{M}$ is said to be dominant if  
$M\in\ZZ[Y_{i,a}]_{i\in I, a\in\CC^*}$.
For $L(\Psib)$ a finite-dimensional simple $U_q(\Glie)$ module, 
$\Psib = M\bigl(\phi(z)\bigr)$ holds for some dominant monomial $M\in\mathcal{M}$.  
The representation will be denoted by $L(M)$.

For example, for $i\in I$, $a\in\CC^*$ and $k\geq 0$, let 
\begin{align}
M_{k,a}^{(i)} = Y_{i,a}Y_{i,aq_i^2}\cdots Y_{i,aq_i^{2(k-1)}}\,.
\label{KRmod}
\end{align}
Then $W_{k,a}^{(i)} = L(M_{k,a}^{(i)})$ is called a Kirillov-Reshetikhin module (KR module).
An explicit formula for $\chi(W_{k,a}^{(i)})$ is known \cite{n, hcr}.

For $i\in I, a\in\CC^*$, 
define $A_{i,a}\in\mathcal{M}$ to be 
$$
Y_{i,aq_i^{-1}}Y_{i,aq_i}
\Bigl(\prod_{\{j\in I|C_{j,i} = -1\}}Y_{j,a}
\prod_{\{j\in I|C_{j,i} = -2\}}Y_{j,aq^{-1}}Y_{j,aq}
\prod_{\{j\in I|C_{j,i} =
-3\}}Y_{j,aq^{-2}}Y_{j,a}Y_{j,aq^2}\Bigr)^{-1}\,.
$$
Note that $\varpi(A_{i,a}) = \overline{\alpha_i}$. 

\begin{thm}\label{trunc}\cite{Fre, Fre2} Le $V$ a simple finite-dimensional $U_q(\Glie)$-module. Then
$$\tilde{\chi}_q(V)\in \ZZ[A_{i,a}^{-1}]_{i\in I, a\in\CC^*}.$$
\end{thm}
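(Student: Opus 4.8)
The plan is to rephrase the assertion as a statement about a partial order on monomials and then reduce it, via tensor products, to the fundamental modules, for which I would argue by restriction to rank-one subalgebras. Concretely, for $m,m'\in\mathcal{M}$ write $m\preceq m'$ if $m'm^{-1}$ is a product of \emph{non-negative} powers of the $A_{i,a}$. Since $\varpi(A_{i,a})=\overline{\alpha_i}$, this order refines the classical dominance order on weights. Writing $M$ for the (dominant) highest monomial of $V=L(M)$, the claim $\tilde{\chi}_q(V)\in\ZZ[A_{i,a}^{-1}]$ is exactly the statement that every monomial $m$ occurring in $\chi_q(V)$ satisfies $m\preceq M$.

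First I would reduce to fundamental modules. By Remark \ref{mult}, $L(M)$ is a subquotient of a tensor product $\bigotimes_k L(Y_{i_k,a_k})$ of fundamental modules, with $M=\prod_k Y_{i_k,a_k}$. Since the $q$-character morphism is a ring morphism, $\chi_q\bigl(\bigotimes_k L(Y_{i_k,a_k})\bigr)=\prod_k\chi_q\bigl(L(Y_{i_k,a_k})\bigr)$. In the Grothendieck ring this class is a non-negative integer combination of simple classes in which $[L(M)]$ occurs; as every simple $q$-character has non-negative coefficients, $\chi_q(L(M))$ is bounded coefficient-wise by $\prod_k\chi_q(L(Y_{i_k,a_k}))$. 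Hence every monomial of $\chi_q(L(M))$ is a monomial of this product, and it suffices to prove $\tilde{\chi}_q(L(Y_{i,a}))\in\ZZ[A_{j,c}^{-1}]$ for each fundamental module.

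For the base case $\g=\widehat{\mathfrak{sl}}_2$ this is explicit: $\chi_q(L(Y_{1,a}))=Y_{1,a}+Y_{1,aq^2}^{-1}=Y_{1,a}\bigl(1+A_{1,aq}^{-1}\bigr)$, and the coefficient-wise bound above then yields the result for all simple $\widehat{\mathfrak{sl}}_2$-modules. For general $\g$ and the fundamental module $L(Y_{i,a})$, I would restrict, for each $j\in I$, to the subalgebra $U_{q_j}(\widehat{\mathfrak{sl}}_2)$ generated by the $x_{j,r}^\pm$, $\phi_{j,m}^\pm$ and $k_j^{\pm1}$. The restriction is finite-dimensional, so the projection of $\chi_q(L(Y_{i,a}))$ onto the variables $Y_{j,c}$ is a genuine $\widehat{\mathfrak{sl}}_2$ $q$-character; since $A_{j,c}$ projects onto the $\widehat{\mathfrak{sl}}_2$ root monomial $Y_{j,cq_j^{-1}}Y_{j,cq_j}$, the $\widehat{\mathfrak{sl}}_2$ case shows that along each node $j$ every monomial differs from a $j$-dominant one by non-negative powers of $A_{j,c}^{-1}$.

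The hard part will be to glue this per-node information into a single global factorization $m=Y_{i,a}\prod_{j,c}A_{j,c}^{-u_{j,c}}$ with all $u_{j,c}\geq0$. I would proceed by downward induction along $\preceq$ starting from $M=Y_{i,a}$: for a monomial $m$ that is not $j$-dominant, the $\widehat{\mathfrak{sl}}_2$-string at $j$ produces a monomial $mA_{j,c}$ strictly above $m$, and the aim is to show these raising steps connect every monomial of $\chi_q(L(Y_{i,a}))$ to $M$ while never leaving $M\ZZ[A_{j,c}^{-1}]$. The main obstacle is precisely this connectedness: because each $A_{j,c}$ involves the neighbours of $j$ through the off-diagonal Cartan entries, the local analysis at a single node does not suffice, and one must rule out monomials sitting in a separate component or lying above $M$. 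This combinatorial interlocking of the rank-one strings, together with the uniqueness of the highest $\ell$-weight forced by the simplicity of $V$, is the core of the argument of \cite{Fre}.
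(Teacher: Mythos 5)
The statement you are asked to prove is quoted in the paper from \cite{Fre, Fre2} without proof, so the benchmark is the argument of \cite{Fre2} that the citation points to. Your reduction to fundamental modules is valid (the coefficient-wise bound via positivity and Remark \ref{mult} is fine), and the explicit $\widehat{\mathfrak{sl}}_2$ computation is correct. But the proposal stops exactly where the theorem lives: you say yourself that gluing the per-node information is ``the core of the argument of \cite{Fre}'' and you do not supply it, so as written this is a plan, not a proof. Worse, the local tool you chose --- projecting $\chi_q\bigl(L(Y_{i,a})\bigr)$ onto the variables $Y_{j,c}$ --- is too lossy to ever deliver the gluing: for $k$ adjacent to $j$, the monomial $A_{k,c}$ itself contains $Y_{j,\cdot}^{-1}$-factors, so under the projection the contributions of $A_{j,\cdot}^{-1}$ and $A_{k,\cdot}^{-1}$ to the $j$-variables are conflated, and the global factorization $m = Y_{i,a}\prod_{j,c} A_{j,c}^{-u_{j,c}}$ with $u_{j,c}\geq 0$ cannot be reassembled from the rank-one shadows.

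The missing idea is module-theoretic rather than combinatorial, and it renders both the reduction to fundamental modules and the worry about extra dominant monomials or separate components unnecessary. Two facts do the job: (a) the raising Drinfeld generators shift $\ell$-weights by exactly the $A_{j,c}$, i.e. $x_{j,r}^+ V_m \subset \bigoplus_{c} V_{mA_{j,c}}$ for the (generalized) $\ell$-weight spaces --- this is the analogue for $x^+$ of \cite[Lemma 4.4]{hcr}, which the present paper invokes for $x^-$ in Section \ref{asymptminus}; and (b) in a simple module the joint kernel of all the $x_{j,r}^+$ in weights strictly below the top is zero, since a nonzero joint-kernel vector would generate a proper submodule --- the paper itself uses exactly this step in the proof of Proposition \ref{cvg2}. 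Given (a) and (b), take any monomial $m\neq M$ of $\chi_q(V)$ and any $0\neq v\in V_m$; some $x_{j,r}^+v\neq 0$, hence some $V_{mA_{j,c}}\neq 0$, so \emph{every} monomial below the top admits a raising step inside $\chi_q(V)$, dominant or not --- this is precisely the connectedness you could not extract from the rank-one strings, and it rules out monomials above $M$ as well, since all weights of $V$ lie below the highest weight and the top weight space is the line $\CC v_+$. Downward induction on the classical weight then yields $m\in M\,\ZZ[A_{j,c}^{-1}]_{j\in I, c\in\CC^*}$ for all $m$. Your inductive skeleton is the right one; what it needs is (a) and (b) in place of the per-node projections.
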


Let $M=\prod_{i\in I, \atop r\in \Z}Y_{i,q^r}^{u_{i,r}}$ 
be a dominant monomial, and let $l\in \Z$. 
We set
\begin{align*}
M^{\ge l}=\prod_{i\in I, r\ge l}Y_{i,q^r}^{u_{i,r}}, 
\quad 
M^{< l}=\prod_{i\in I, r< l}Y_{i,q^r}^{u_{i,r}}, 
\end{align*}
so that $M=M^{\ge l}M^{< l}$.  
It is well-known that the results in \cite{c, kas, vv}  
imply the existence
of a surjective morphism of $U_q(\Glie)$-modules 
(see references in \cite[Corollary 5.5]{h3}) :
\begin{align}
L(M^{\ge l}) \otimes L(M^{<l})\longrightarrow L(M)\,.
\label{LLL}
\end{align}
Consider the linear subspace 
\begin{align*}
L(M)^{\ge l}=\bigoplus_{m\in M\Z[A^{-1}_{i,q^{r+d_i}}
]_{i\in I,r\ge l}}(L(M))_m\,.
\end{align*}
The following result will play a key role in subsequent sections. 
\begin{thm}\cite{h3}\label{H3}
(i) The map \eqref{LLL} restricts to an isomorphism 
of vector spaces
\begin{align}
L(M^{\ge l})\otimes v^{<l}\longrightarrow L(M)^{\ge l},
\label{LL2}
\end{align}
where $v^{<l}$ is a highest weight vector of $L(M^{<l})$.

(ii) Let $F:L(M^{\ge l})\to L(M)^{\ge l}$ be the composition of the map \eqref{LL2} with
the natural map $L(M^{\ge l})\to L(M^{\ge l})\otimes v^{<l}$. 
Then for any $i,j\in I$ and $r\in \Z$
we have
\begin{align}
&x^+_{i,r}\, F=F\, x^+_{i,r},
\label{Fx}
\\
&\phi_j^{\pm}(z)\, F=F\, M^{<l}\bigl(\phi^\pm_j(z)\bigr)\times \phi^\pm_j(z)\,.
\label{Fphi}
\end{align}
\end{thm}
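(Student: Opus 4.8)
The plan is to prove the commutation relations \eqref{Fx} and \eqref{Fphi} first, for $F$ regarded merely as a linear map into $L(M)$, and only afterwards to deduce the bijectivity statement (i) from them, since \eqref{Fphi} is exactly what pins down the $\ell$-weights of the image. Write $\Phi$ for the surjection \eqref{LLL}, and let $v^{\ge l}$, $v^{<l}$ be the highest weight vectors of $L(M^{\ge l})$, $L(M^{<l})$, so that $F(u)=\Phi(u\otimes v^{<l})$. Both relations come from evaluating the coproduct formulas of Theorem \ref{apco} on vectors $u\otimes v^{<l}$ and using that $v^{<l}$ is a highest weight vector, so $X^+v^{<l}=0$ and $\U_q^+(\g)v^{<l}=0$. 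In \eqref{x} the tail $U_q(\g)\otimes\bigl(U_q(\g)X^+\bigr)$ then annihilates $u\otimes v^{<l}$, giving $x^+_{i,r}(u\otimes v^{<l})=(x^+_{i,r}u)\otimes v^{<l}$, which is \eqref{Fx}. In \eqref{h} the error term $\U_q^-(\g)\otimes\U_q^+(\g)$ is killed since $\U_q^+(\g)v^{<l}=0$, leaving the diagonal part $\sum_{0\le s\le r}\phi^\pm_{j,\pm s}\otimes\phi^\pm_{j,\pm(r-s)}$; as $\phi^\pm_{j,\pm(r-s)}v^{<l}$ is the scalar $M^{<l}$-eigenvalue times $v^{<l}$, reassembling the generating series yields \eqref{Fphi}.

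Next I would prove that $F$ is injective, which is the conceptual heart. By \eqref{Fphi} the map $F$ sends the weight space attached to $m_1$ into that attached to $M^{<l}m_1$, so $\ker F$ is graded by the classical weight. By \eqref{Fx} it is stable under every $x^+_{i,r}$ ($i\in I$, $r\in\Z$): if $Fu=0$ then $Fx^+_{i,r}u=x^+_{i,r}Fu=0$. Suppose $\ker F\ne 0$ and choose $u\in\ker F$ of maximal weight. Then each $x^+_{i,r}u\in\ker F$ lies in a strictly higher weight space, forcing $x^+_{i,r}u=0$ for all $i,r$. Thus $u$ is a highest $\ell$-weight vector of the simple module $L(M^{\ge l})$; since such vectors fill exactly the one-dimensional top line $\C v^{\ge l}$, we get $u\in\C v^{\ge l}$. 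But $F(v^{\ge l})=\Phi(v^{\ge l}\otimes v^{<l})$ is the highest weight vector of $L(M)$, hence nonzero, contradicting $u\in\ker F$. Therefore $\ker F=0$.

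To identify the image with $L(M)^{\ge l}$ I would use two combinatorial facts about $q$-characters, both consequences of Theorem \ref{trunc} and the explicit shape of $A_{i,a}$. The first is a lower-bound lemma: for a dominant monomial $N$ all of whose factors sit at positions $\ge l$, every monomial of $\chi_q(L(N))$ lies in $N\,\Z[A^{-1}_{i,q^{r+d_i}}]_{i\in I,\,r\ge l}$; inductively, a factor $A^{-1}_{i,q^{r+d_i}}$ can be introduced only at a spot carrying a positive power of $Y_{i,q^r}$, and this never drops below the lowest position present in $N$. Applied to $N=M^{\ge l}$ together with \eqref{Fphi}, this gives $F(L(M^{\ge l})_{m_1})\subseteq L(M)_{M^{<l}m_1}\subseteq L(M)^{\ge l}$, so the image is contained in $L(M)^{\ge l}$. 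The second is a triggering lemma: for $N$ supported at positions $<l$, the only monomial $m$ of $\chi_q(L(N))$ with $mN^{-1}\in\Z[A^{-1}_{i,q^{r+d_i}}]_{r\ge l}$ is $m=N$, because the first $A^{-1}$ applied to $N$ must use a positive $Y$ already present in $N$ and so sits at a position $<l$.

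Finally I would combine these. Applying $\chi_q$ to the surjection $\Phi$ gives, for each monomial $m$, the inequality $\dim L(M)_m\le\sum_{m_1m_2=m}\dim L(M^{\ge l})_{m_1}\,\dim L(M^{<l})_{m_2}$. When $m$ lies in the cone $M\,\Z[A^{-1}_{i,q^{r+d_i}}]_{r\ge l}$, the lower-bound lemma forces $m_1(M^{\ge l})^{-1}$ into the $r\ge l$ family, whence $m_2(M^{<l})^{-1}$ lands there too, and the triggering lemma forces $m_2=M^{<l}$ in every nonvanishing term; the bound thus collapses to $\dim L(M)_m\le\dim L(M^{\ge l})_{m(M^{<l})^{-1}}$. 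On the other hand the injectivity of $F$ gives the reverse inequality $\dim L(M)_{M^{<l}m_1}\ge\dim L(M^{\ge l})_{m_1}$. Hence all these are equalities and $F$ is a bijection onto $L(M)^{\ge l}$, proving (i); statement (ii) is then just the relations of the first paragraph, now read with the justified codomain $L(M)^{\ge l}$. I expect the main obstacle to be the careful verification of the two combinatorial lemmas, that is, controlling precisely which $A^{-1}_{i,q^{r+d_i}}$ can occur in $\chi_q(L(N))$ from the position data of $N$: the Frenkel--Mukhin algorithm underlying Theorem \ref{trunc} is the natural tool, but matching its position bookkeeping to the sharp cutoff at $r=l$ is the delicate step.
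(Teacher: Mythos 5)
You should know at the outset that the paper contains no internal proof to compare against: Theorem \ref{H3} is imported, with statement (i) attributed to Proposition 5.6 of \cite{h3} and statement (ii) to Remark 5.7 there (a special case appeared in \cite{hl}). Measured against the cited proof, your reconstruction follows essentially the same route, and the deductive skeleton is correct. Deriving \eqref{Fx} and \eqref{Fphi} first from Theorem \ref{apco} is right: the tails $U_q(\Glie)\otimes(U_q(\Glie)X^+)$ and $\U_q^-(\Glie)\otimes\U_q^+(\Glie)$ annihilate $u\otimes v^{<l}$ because $v^{<l}$, being a highest $\ell$-weight vector of a simple module over the full quantum loop algebra, is killed by every $x^+_{j,m}$ ($m\in\Z$) and by anything of positive $Q$-degree. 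Your injectivity argument is also sound: $\ker F$ is weight-graded and $x^+_{i,r}$-stable by \eqref{Fx}--\eqref{Fphi}, a maximal-weight vector of it is annihilated by all $x^+_{i,r}$, hence lies on the one-dimensional top line of the simple module $L(M^{\ge l})$ by the triangular decomposition, and $F(v^{\ge l})\neq 0$ since the top weight spaces on both sides of \eqref{LLL} are one-dimensional. The collapse of the inequality $\dim L(M)_m\le\sum_{m_1m_2=m}\dim L(M^{\ge l})_{m_1}\dim L(M^{<l})_{m_2}$ on the cone $M\,\Z[A^{-1}_{i,q^{r+d_i}}]_{i\in I,r\ge l}$ is the right mechanism for identifying the image with $L(M)^{\ge l}$. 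One step you use tacitly and should make explicit: to conclude that $m_2(M^{<l})^{-1}$ has all its $A^{-1}$-factors at positions $r\ge l$, you need the multiplicative independence of the $A_{i,a}$'s (unique factorization of $mM^{-1}$ as a product of $A^{-1}_{i,a}$'s), a standard lemma of \cite{Fre}.

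The gap you flagged yourself is the genuine one, and it is worth naming precisely why your sketch of the two combinatorial lemmas would not survive scrutiny as written. The justification ``a factor $A^{-1}_{i,q^{r+d_i}}$ can be introduced only at a spot carrying a positive power of $Y_{i,q^r}$'' is the Frenkel--Mukhin algorithm heuristic, but the FM algorithm is not known to compute (and in general does not compute) $\chi_q$ of an arbitrary simple module, so ``inductively introduced'' has no rigorous meaning for general $L(N)$. The argument that does work, and that underlies Section 5 of \cite{h3}, replaces the algorithmic picture by the screening-operator reduction behind Theorem \ref{trunc}: for each $i$, the $q$-character $\chi_q(L(N))$ decomposes into $\widehat{\mathfrak{sl}}_2$-strings in the $i$-th direction, each headed by an $i$-dominant monomial of $\chi_q(L(N))$, and within a string the connecting factors $A^{-1}_{i,aq_i}$ occur only at positions $a$ where the head carries positive powers of $Y_{i,a}$; an induction on weight then yields simultaneously your lower-bound lemma and your triggering lemma (in the form: every non-highest monomial $m$ of $\chi_q(L(N))$ has in the factorization of $mN^{-1}$ at least one factor anchored at the support of $N$, and all factors stay weakly above the lowest position of $N$). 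These lemmas are exactly the technical content of the result in \cite{h3} that the paper cites; with them established by the string argument rather than by the FM bookkeeping, your proof is complete and coincides in substance with the cited one.
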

Statement (i) appears as Proposition 5.6 in \cite{h3}, and statement 
(ii) follows from Remark 5.7, {\it loc. cit.}. A particular
case of this result had been proved in \cite{hl}.

\subsection{The dual category $\mathcal{O}^*$}\label{dualcat}

For $V$ a $\tb$-diagonalizable $U_q(\bo)$-module, 
we define a structure of $U_q(\bo)$-module on its
graded dual $V^* = \oplus_{\beta\in \tb^*} V_\beta^*$ by
\begin{align*}
(x\,u)(v)=u\bigl(S^{-1}(x)v\bigr)\quad
(u\in  V^*, \ v\in V,\ x \in U_q(\mathfrak{b})).
\end{align*}
The reason why we use $S^{-1}$ and not the antipode in the definition of $V^*$ 
is discussed in Remark \ref{smoins} below. 

\begin{defi} Let $\mathcal{O}^*$ be the category of $\tb$-diagonalizable $U_q(\bo)$-modules $V$ 
such that $V^*$ is in category $\mathcal{O}$.
\end{defi}

\begin{lem}\label{autredef} A $\tb$-diagonalizable $U_q(\bo)$-module $V$ is in category $\mathcal{O}$
if and only if $V^*$ is in category $\mathcal{O}^*$.
\end{lem}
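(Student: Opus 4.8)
The plan is to reduce the entire statement to a computation of the weight-space decomposition of the graded dual, after observing that the three defining conditions of category $\mathcal{O}$ depend only on the dimensions and the support of the weight spaces, and not on the finer module structure.

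First I would determine how $\tb$ acts on $V^*$. For $v\in V_\beta$ and a functional $u$ supported on $V_\beta$, the definition of the dual action gives $(k_i\,u)(v)=u\bigl(S^{-1}(k_i)v\bigr)=u(k_i^{-1}v)=\beta(i)^{-1}u(v)$, so $u$ has $\tb$-weight $\beta^{-1}$. Since $V^*=\oplus_{\beta\in\tb^*}V_\beta^*$ and $\beta\mapsto\beta^{-1}$ is a bijection of $\tb^*$, this yields $(V^*)_\omega=(V_{\omega^{-1}})^*$ for all $\omega\in\tb^*$; in particular $V^*$ is automatically $\tb$-diagonalizable and $\dim(V^*)_\omega=\dim V_{\omega^{-1}}$. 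This already explains why $V^*$ itself need not lie in $\mathcal{O}$: inverting the weights turns an upper bound on the weights of $V$ into a lower bound on those of $V^*$, which is exactly why the separate category $\mathcal{O}^*$ is introduced. Applying the formula twice gives $\bigl((V^*)^*\bigr)_\omega=\bigl((V_\omega)^*\bigr)^*$, equipped with the canonical evaluation map $V_\omega\to\bigl((V_\omega)^*\bigr)^*$, which is an isomorphism exactly when $V_\omega$ is finite-dimensional.

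With this dictionary both implications become routine. For the forward direction I would assume $V\in\mathcal{O}$; then every $V_\omega$ is finite-dimensional, so $\bigl((V^*)^*\bigr)_\omega\cong V_\omega$, whence the weight spaces of $(V^*)^*$ are finite-dimensional and $(V^*)^*$ has exactly the same weights as $V$, lying in some finite union $\bigcup_j D(\lambda_j)$; as $(V^*)^*$ is $\tb$-diagonalizable (being a graded dual), it follows that $(V^*)^*\in\mathcal{O}$, i.e. $V^*\in\mathcal{O}^*$. For the converse I would assume $V^*\in\mathcal{O}^*$, meaning $(V^*)^*\in\mathcal{O}$; then each $\bigl((V_\omega)^*\bigr)^*$ is finite-dimensional, forcing $V_\omega$ to be finite-dimensional, and the weights of $V$ coincide with those of $(V^*)^*$ and so lie in a finite union of sets $D(\lambda_j)$, while $V$ is $\tb$-diagonalizable by hypothesis; hence $V\in\mathcal{O}$.

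The only step demanding care is the finite-dimensionality bookkeeping in the double dual, namely the implication that $\dim\bigl((V_\omega)^*\bigr)^*<\infty$ forces $\dim V_\omega<\infty$ (an infinite-dimensional space has an infinite-dimensional double dual). This is precisely what drives the converse, and it is why the argument genuinely needs the finiteness of weight spaces rather than working with arbitrary duals. I do not expect to require any module isomorphism $V\cong(V^*)^*$, which in any case would hold only up to the square of the antipode; verifying the three weight-space conditions of $\mathcal{O}$ is enough.
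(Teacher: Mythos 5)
Your proposal is correct and takes essentially the same approach as the paper: the paper's proof is the one-line observation that since $S^{2}(k_i)=k_i$, the weight spaces of $(V^*)^*$ are identified with those of $V$, after which all three defining conditions of category $\mathcal{O}$ are read off weight space by weight space, exactly as you do. Your write-up merely makes explicit what the paper leaves implicit, namely the computation $\bigl((V^*)^*\bigr)_\omega=\bigl((V_\omega)^*\bigr)^*$ and the careful point that the evaluation map is an isomorphism only for finite-dimensional weight spaces, with injectivity of evaluation handling the converse direction.
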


\begin{proof} As $S^2(k_i) = k_i$ for any $i\in I$, the weight spaces of $(V^*)^*$ can be identified with the weight
spaces of $V$. The result follows.
\end{proof}

A $U_q(\mathfrak{b})$-module $V$ is said to be of lowest $\ell$-weight 
$\Psib\in \tb^*_\ell$ if there is $v\in V$ such that $V =U_q(\mathfrak{b})v$ 
and the following hold:
\begin{align*}
U_q(\bo)^- v = \CC v\,,
\qquad 
\phi_{i,m}^+v=\Psi_{i, m}v\quad (i\in I,\ m\ge 0)\,.
\end{align*}
For $\Psib\in\tb^*_\ell$, we have the simple $U_q(\bo)$-module $L'(\Psib)$ of lowest $\ell$-weight $\Psi$, which is not in category $\mathcal{O}^*$ in general. 
We define the fundamental representation $L_{i,a}'^\pm$ whose lowest $\ell$-weight is the
highest $\ell$-weight of $L_{i,a}^\pm$.

The analog of Proposition \ref{rat} holds and we have the notion of characters
and $q$-characters for category $\mathcal{O}^*$ as in Section \ref{qcharo}.

For $V$ a $\tb$-diagonalizable $U_q(\bo)$-module 
with finite-dimensional weight spaces, we have 
$$\chi(V^*) = \chi^{-1}(V)$$ 
where $\chi^{-1}(V)$ is obtained from $\chi(V)$
by replacing each $[\omega]$ by $[\omega^{-1}]$ for $\omega\in\tb^*_\ell$.

\begin{prop}\label{dualweight} Let $V$ be a $U_q(\bo)$-module of lowest $\ell$-weight $\Psib$ with finite-dimensional weight spaces. Then $V^*$ contains a highest $\ell$-weight vector of $\ell$-weight $\Psib^{-1}$.
In the case $V$ irreducible, we get $(L'(\Psib))^* \simeq L(\Psib^{-1})$.
\end{prop}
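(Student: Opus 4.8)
The plan is to produce an explicit highest $\ell$-weight vector of $V^*$, namely the functional dual to the lowest $\ell$-weight vector of $V$, and then, in the irreducible case, to promote this to an isomorphism using the self-duality of the graded-dual functor. First I would pin down the weights. Write $v$ for the lowest $\ell$-weight vector and $\gamma=\varpi(\Psib)$ for its weight. Since $U_q(\bo)^-v=U_q(\bo)^0v=\CC v$, reading the triangular decomposition \eqref{triandecomp} with its factors in the opposite order $U_q(\bo)=U_q(\bo)^+U_q(\bo)^0U_q(\bo)^-$ (which holds equally by the PBW theorem) gives $V=U_q(\bo)^+v$; hence every weight $\omega$ of $V$ satisfies $\omega\geq\gamma$ and $V_\gamma=\CC v$. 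Because $S^{-1}(k_i)=k_i^{-1}$, a functional supported on $V_\omega$ has weight $\omega^{-1}$ in $V^*$, so $(V^*)_\mu=(V_{\mu^{-1}})^*$ and $\gamma^{-1}$ is the top weight of $V^*$, with $(V^*)_{\gamma^{-1}}=\CC v^*$ where $v^*(v)=1$.

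For $i\in I$, using $S^{-1}(e_i)=-e_ik_i^{-1}$ and $e_i=x_{i,0}^+$, one sees $e_i\cdot v^*\in(V^*)_{\gamma^{-1}\ga_i}=(V_{\gamma\ga_i^{-1}})^*=0$ since $\gamma\ga_i^{-1}<\gamma$; thus $e_i\cdot v^*=0$. The core of the argument is the $\ell$-weight of $v^*$. By weight reasons $\phi_{i,m}^+\cdot v^*$ is a multiple of $v^*$, with coefficient $c_{i,m}=v^*\!\bigl(S^{-1}(\phi_{i,m}^+)v\bigr)$. I would evaluate the antipode identity $\sum x_{(2)}S^{-1}(x_{(1)})=\epsilon(x)$ at $x=\phi_{i,m}^+$, inserting the coproduct of Theorem \ref{apco}, which in generating-series form reads $\Delta(\phi_i^+(z))=\phi_i^+(z)\otimes\phi_i^+(z)+(\text{terms in }\U_q^-\otimes\U_q^+)$. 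A correction term $a\otimes b$ contributes $b\,S^{-1}(a)$; since $S^{-1}$ preserves the $Q$-grading and $a$ has strictly negative weight, $S^{-1}(a)$ is again of strictly negative weight, whence $S^{-1}(a)v\in V_{<\gamma}=0$ and all corrections drop out. In the surviving terms $S^{-1}(\phi_{i,l}^+)$ is of weight $0$, so $S^{-1}(\phi_{i,l}^+)v\in V_\gamma=\CC v$, say $S^{-1}(\phi_{i,l}^+)v=d_{i,l}v$; applying $\phi_{i,m-l}^+$, using $\phi_{i,m-l}^+v=\Psi_{i,m-l}v$, pairing with $v^*$, and using $\epsilon(\phi_{i,m}^+)=\delta_{m,0}$, I obtain $\sum_{0\leq l\leq m}\Psi_{i,m-l}d_{i,l}=\delta_{m,0}$. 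In generating series this is $\Psi_i(z)\bigl(\sum_l d_{i,l}z^l\bigr)=1$, and since $c_{i,l}=d_{i,l}$ this shows $\phi_i^+(z)\cdot v^*=\Psi_i(z)^{-1}v^*$. Hence $v^*$ is a highest $\ell$-weight vector of $\ell$-weight $\Psib^{-1}$, which is the first assertion.

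For the irreducible case, I would use that on $\tb$-diagonalizable modules with finite-dimensional weight spaces the graded dual is an exact contravariant functor satisfying $(V^*)^*\cong V$ (as in the proof of Lemma \ref{autredef}, since $S^2(k_i)=k_i$). If $V=L'(\Psib)$ is simple, a proper nonzero submodule of $V^*$ would dualize to a proper nonzero subquotient of $V$, contradicting simplicity; hence $V^*$ is simple. Being simple and containing the highest $\ell$-weight vector $v^*$, it is generated by $v^*$, so $V^*\cong L(\Psib^{-1})$ by Proposition \ref{simple}.

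I expect the main obstacle to be the second paragraph: one must correctly bound the weights of $V$ from below (so that $V_\gamma=\CC v$ and the correction terms in $\Delta(\phi_{i,m}^+)$ annihilate $v$) and then read off the inverse series cleanly from the antipode identity. This is precisely where the lowest-weight hypothesis, the one-dimensionality of $V_\gamma$, and the grading-preservation of $S^{-1}$ all enter; the remaining steps are formal.
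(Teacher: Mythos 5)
Your proof is correct and follows essentially the same route as the paper: the paper's argument applies $\Delta(\phi_j^+(z))$ from Theorem \ref{apco} to $v\otimes v^*$ and uses the fact that the evaluation map $\mathcal{D}: V\otimes V^*\rightarrow \CC$ is a morphism of $U_q(\bo)$-modules, which is precisely your antipode identity $\sum x_{(2)}S^{-1}(x_{(1)})=\epsilon(x)$ unpacked element-wise --- the correction terms in $\U_q^-(\Glie)\otimes \U_q^+(\Glie)$ are killed by the same lowest-weight considerations, and the resulting identity $\Psi_j(z)\Psi_j^*(z)=1$ is exactly your convolution relation $\sum_{0\le l\le m}\Psi_{i,m-l}d_{i,l}=\delta_{m,0}$. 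You are somewhat more explicit on two points the paper leaves implicit (the opposite-order triangular decomposition ensuring the weights of $V$ are bounded below with $V_\gamma=\CC v$, and the biduality argument for simplicity of $V^*$, where strictly $(V^*)^*$ is $V$ twisted by $S^{-2}$, which does not affect the submodule lattice), but the mathematical content is the same.
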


\begin{proof} 
Let $v$ be a lowest $\ell$-weight vector in $V$. Let $v^*\in V^*$
such that $v^*(v) = 1$ and $v^*$ is zero on higher weight spaces of 
$V$. As $\chi(V^*) = \chi^{-1}(V)$ and the weight of $v^*$ is the opposite
of the weight of $v$, $v^*$ has maximal weight in $V^*$, and so 
$e_j\, v^* = 0$ for any $j\in I$. 
Let us compute the $\ell$-weight $\Psib^* = (\Psi_j^*(z))_{j\in I}$ of $v^*$ 
in $V^*$ in terms of $\Psib = (\Psi_j(z))_{j\in I}$.

As $U_q(\mathfrak{b})$ is a Hopf algebra, the linear morphism 
\begin{align*}
\mathcal{D} :  V \otimes V^* 
\rightarrow 
\CC\,,\quad
\mathcal{D}(u\otimes v) = v(u)
\end{align*}
is a morphism of $U_q(\mathfrak{b})$-module. 

Let $j\in I$. From Theorem \ref{apco}, we have in 
$V \otimes V^*$
$$
\phi_j^+(z)(v\otimes v^*) 
= \phi_j^+(z)v\otimes \phi_j^+(z)v^*
= \Psi_j(z) \Psi_j^*(z)(v\otimes v^*).
$$
As $\mathcal{D}(v\otimes v^*) = 1$ and the action of
$\phi_j^+(z)$ on the trivial module is $1$, by applying $\mathcal{D}$ we get 
$\Psi_j^*(z) = \Psi_j^{-1}(z)$.

For the last point, by construction, $V$ is irreducible if and only if $V^*$ is irreducible.
\end{proof}

\begin{rem}\label{smoins}
If we had used $S$ instead of $S^{-1}$ to define
$V^*$, we should have used a map 
$V^* \otimes V\rightarrow \CC$
instead of $\mathcal{D}$ (as for example in the proof of \cite[Lemma 6.8]{Fre2}). 
But then by using Theorem \ref{apco} as above, we could get additional terms because
$v^*$ has maximal weight and $v$ has minimal weight.\end{rem}

Let $L(m)$ be an irreducible finite-dimensional 
representation of $U_q(\mathfrak{g})$. 
As a $U_q(\bo)$-module, $L(m)$ is in category $\mathcal{O}$
and in category $\mathcal{O}^*$, and is irreducible 
by Proposition \ref{fd}. By \cite[Corollary 6.9]{Fre2},
the lowest weight monomial of $L(m)$ is
$$
m' = \prod_{i\in I, a\in\CC^*}Y_{\overline{i},aq^{r^\vee h^\vee}}^{-u_{i,a}}
$$ 
if we denote $u_{i,a}$ the multiplicity of $Y_{i,a}$ in $m$. 
Here $\overline{i}$ 
is defined so that $w_0(\alpha_i) = - \alpha_{\overline{i}}$ 
where $w_0$ is the longest element of the Weyl group, $h^\vee$ is the 
dual Coxeter number, and $r^\vee$
is the maximal number of edges connecting two vertices of the Dynkin diagram
of $\gb$. 
So we have 
\begin{equation}\label{remkr}
L(m) = L'(m')\text{ and }(L(m))^* \simeq L((m')^{-1}).
\end{equation} 
This last isomorphism is analogous 
to \cite[Proposition 5.1(b)]{Cha2} where the standard duality is used.

\section{Asymptotic representations and $L_{i,1}^-$}\label{asymptminus}

In this section, we shall construct the irreducible $U_q(\bo)$-module
$L^{-}_{i,1}$ as a limit of KR modules of $U_q(\g)$. 
Throughout this section we assume that $|q| > 1$ 
(except when it is mentioned explicitly). 

\subsection{Example}\label{premex}

As an illustration, let us first consider the simplest example 
$U_q(\widehat{\mathfrak{sl}}_2)$. 
Consider the KR modules $L(M_k)$
with 
$M_k=Y_{1,q^{-1}}Y_{1,q^{-3}}\cdots Y_{1,q^{-2k+1}}$. 
Its normalized $q$-character is 
$$
1 + A_{1,1}^{-1} + A_{1,1}^{-1}A_{1,q^{-2}}^{-1} 
+ \cdots + A_{1,1}^{-1} \cdots A_{1,q^{-2(k - 1)}}^{-1}\,.
$$
When $k\rightarrow \infty$,  
it converges as a formal power series in the $A_{1,b}^{-1}$ to
$$
\sum_{j\geq 0} A_{1,1}^{-1}A_{1,q^{-2}}^{-1}\cdots A_{1,q^{-2(j - 1)}}^{-1}.
$$
Let us explain this in terms of representations.

The module $L(M_k)$ carries a basis $(v_0,\cdots,v_k)$ 
with the action of $U_q(\widehat{\mathfrak{sl}}_2)$ given by 
\begin{align*}
&x_{1,r}^+v_j=q^{2r(-j+1)} v_{j-1}\,,
\quad 
x_{1,r}^-v_j=q^{-2rj}[j+1]_q[k - j]_qv_{j+1}\,,
\\
&\phi^{\pm}_1(z)v_j=q^{k-2j}
\frac{(1-q^{-2k}z)(1-q^{2}z)}{(1-q^{-2j+2}z)(1-q^{-2j}z)}v_j\,.
\end{align*}
Observe that the action of the $x_{1,r}^+$ on this basis
does not depend on $k$. 
In contrast, that of the $x_{1,r}^-$ depends on $k$ and diverges 
as $k\to\infty$. Nevertheless the actions of 
$\tilde{x}_{1,r}^-$ and  $\tilde{\phi}_1^\pm(z)$ converge.
Altogether these limiting operators give rise to the following 
`asymptotic' representation of $\widetilde{U}_q(\widehat{\mathfrak{sl}}_2)$ 
on the space $V_\infty = \oplus_{j=0}^\infty \CC v_j$:
\begin{align*}
&\tilde{x}_{1,r}^+v_j = q^{2r(-j+1)} v_{j-1}\,,
\quad
\tilde{x}_{1,r}^-v_j = q^{-2rj+j+2}\frac{[j+1]_q}{q - q^{-1}} v_{j+1}\,,
\\
&\tilde{\phi}_1^+(z)v_j = \frac{1 - q^2z}{(1 - q^{-2j+2}z)(1 - q^{-2j}z)}v_j\,,
\\
&\tilde{\phi}_1^-(z)v_j = 
-q^{4j}\frac{ z^{-1} (1 -q^{-2}z^{-1})}{(1 - q^{2j-2}z^{-1})(1 - q^{2j}z^{-1})}v_j\,.
\end{align*}
Introducing a $Q$-grading  
by $\text{deg}(v_j) = -j\alpha_1$, and defining $k_1v_j=q^{-2j}v_j$, 
we obtain by Proposition \ref{asym} 
a structure of a $U_q(\mathfrak{b})$-module on $V_\infty$.
This representation is clearly irreducible, 
and $\phi_1^+(z)v_0 = (1 - z)^{-1} v_0$. 
So we get the following.

\begin{lem} 
$V_\infty$ is isomorphic to $L_{1,1}^-$ as a $U_q(\mathfrak{b})$-module.
\end{lem}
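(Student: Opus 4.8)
The plan is to identify $V_\infty$ with $L_{1,1}^-$ by verifying two things: that $V_\infty$ is a highest $\ell$-weight module with the correct highest $\ell$-weight, and that it is irreducible. Since $L_{1,1}^-$ is by definition the \emph{unique} simple module with highest $\ell$-weight $\Psi_1(z)=(1-z)^{-1}$ (and $\Psi_j(z)=1$ for $j\neq 1$, though here $n=1$), establishing these two facts forces the isomorphism via Proposition \ref{simple}.

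First I would check the highest $\ell$-weight condition. The vector $v_0$ sits in top degree $V_\infty^{(0)}$, and from the explicit formulas $\tilde{x}_{1,r}^+ v_0 = q^{2r} v_{-1} = 0$, so $e_1 v_0 = \tilde{x}_{1,0}^+ v_0 = 0$. For $\g=\widehat{\mathfrak{sl}}_2$ the remaining Borel generator acts via $e_0 \leftrightarrow \tilde{x}_{1,1}^-$ (see \eqref{e0}), which does not annihilate $v_0$, but this is expected: $v_0$ need only be a \emph{highest} $\ell$-weight vector in the sense that $e_i v_0 = 0$ for $i\in I$ and $\phi_{i,m}^+ v_0 = \Psi_{i,m} v_0$. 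The computation already recorded in the text gives $\phi_1^+(z) v_0 = (1-z)^{-1} v_0$, which is exactly the highest $\ell$-weight of $L_{1,1}^-$ by the defining formula \eqref{fund-rep}. Thus $U_q(\mathfrak{b})\,v_0$ is a highest $\ell$-weight submodule with the right $\ell$-weight.

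Next I would argue that $V_\infty$ is generated by $v_0$ and is irreducible. From the formula $\tilde{x}_{1,r}^- v_j = q^{-2rj+j+2}\frac{[j+1]_q}{q-q^{-1}} v_{j+1}$, the coefficient is nonzero for every $j\geq 0$ (as $q$ is not a root of unity, $[j+1]_q\neq 0$), so repeated application of $\tilde{x}_{1,r}^-$—equivalently, the action of the lowering generators available in $U_q(\mathfrak{b})$—moves $v_0$ to a nonzero multiple of each $v_{j+1}$. Hence $U_q(\mathfrak{b})\,v_0 = V_\infty$, so $V_\infty$ is itself a highest $\ell$-weight module. For irreducibility, I would observe that the $\tilde{\phi}_1^+(z)$-eigenvalues on the $v_j$ are pairwise distinct (the poles at $z=q^{-2j}, q^{-2j+2}$ differ with $j$), so each $\CC v_j$ is a distinct joint $\phi_1^+$-eigenline and any nonzero submodule $W$ must contain some $v_j$; applying the raising operators $\tilde{x}_{1,r}^+$ (whose coefficients $q^{2r(-j+1)}$ are nonzero) then produces $v_0\in W$, and by the generation statement $W=V_\infty$.

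The main obstacle I anticipate is purely notational rather than conceptual: one must take care that the generators actually available inside $U_q(\mathfrak{b})$ suffice to run both the generation and the irreducibility arguments. As recalled in Section \ref{boralg}, for $\g=\widehat{\mathfrak{sl}}_2$ the Borel algebra contains $x_{1,m}^+$ ($m\geq 0$) and $x_{1,r}^-$ ($r\geq 1$), and the $U_q(\mathfrak{b})$-structure on $V_\infty$ comes from Proposition \ref{asym} applied to the $Q$-graded $\widetilde{U}_q(\widehat{\mathfrak{sl}}_2)$-module structure. Since the displayed $\tilde{x}_{1,r}^\pm$-actions are nonzero for the relevant ranges of $r$, both arguments go through; one need only confirm that the grading shift $k_1 v_j = q^{-2j} v_j$ matches the convention $k_i v = q_i^{\alpha(\alpha_i^\vee)} v$ of \eqref{vsigma1} on $V_\infty^{(-j\alpha_1)}$, which it does since $(-j\alpha_1)(\alpha_1^\vee) = -2j$. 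With these checks in place, Proposition \ref{simple} yields $V_\infty\simeq L_{1,1}^-$.
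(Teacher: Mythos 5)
Your proposal is correct and follows the same route as the paper, which simply notes that $\phi_1^+(z)v_0=(1-z)^{-1}v_0$ and that the representation is ``clearly irreducible''; your argument fills in exactly those two points (highest $\ell$-weight of $v_0$, generation by $v_0$ via the nonvanishing lowering coefficients, and irreducibility from the one-dimensional eigenlines plus raising back to $v_0$), together with the correct check that the available Borel generators $x_{1,m}^+$ ($m\geq 0$), $x_{1,r}^-$ ($r\geq 1$) suffice and that the grading convention of \eqref{vsigma1} matches $k_1v_j=q^{-2j}v_j$. No gaps.
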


It is easy to show that this action cannot be extended to 
an action of the full quantum affine algebra 
$U_q(\widehat{\mathfrak{sl}}_2)$.

As another example, 
let us study the KR modules for $U_q(\widehat{\mathfrak{sl}}_3)$ 
$$
V_k=L(M_k),\quad M_k=Y_{1,q^{-1}}Y_{1,q^{-3}}\cdots  Y_{1,q^{-2k+1}}\,.
$$
Then $V_k$ has a basis 
$\{v_{n,n'}\}_{0\leq n'\leq n\leq k}$ 
consisting of $\ell$-weight vectors $v_{n,n'}$ of 
$\ell$-weight 
$M_k \cdot 
A_{1, 1}^{-1}\cdots A_{1,q^{-2n+2}}^{-1}\cdot A_{2,q}^{-1} 
\cdots A_{2,q^{- 2n'+ 3}}^{-1}
$.
The action of the generators is given explicitly as follows.
\begin{align*}
&x_{1,r}^+ v_{n,n'} = q^{r(- 2 n + 2)} [n-n']_q v_{n - 1,n'}\,,
\quad
x_{2,r}^+ v_{n,n'} = q^{r(- 2n'+3)} v_{n,n' - 1},
\\
&x_{1,r}^- v_{n,n'} = q^{-2 nr}[k-n]_q v_{n + 1,n'},
\quad
x_{2,r}^- v_{n,n'} = q^{r(- 2n'+1)} [n'+1]_q [n-n']_q v_{n,n' + 1},
\\
&\phi_1^{\pm}(z)v_{n,n'} = q^{k-2n+ n'} \frac{(1 - q^{-2k}z) (1 -q^{-2n'+2}z)}
{(1-q^{- 2n}z ) (1 -q^{-2n+2}z)} v_{n,n'},
\\
&\phi_2^{\pm}(z)v_{n,n'} = 
q^{n-2n'} \frac{(1 - q^3z) (1 - q^{-2n+1}z)}
{(1 - q^{- 2n'+1}z) (1 -q^{- 2n'+3}z)} v_{n,n'}\,.
\end{align*}
Here we set $v_{n,n'} = 0$ unless $0\leq n'\leq n\leq k$. 
Note that the action of 
$\tilde{x}_{2,m}^-$ and $\tilde{\phi}_2^{\pm}(z)$ do not depend on $k$.
Letting $k\rightarrow \infty$, we get a representation of 
$\widetilde{U}_q(\widehat{\mathfrak{sl}}_3)$ 
on
$V_\infty=\oplus_{0\le n'\le n}\C v_{n,n'}$:
\begin{align*}
&\tilde{x}_{1,r}^+v_{n,n'} 
= q^{r(- 2 n + 2)}[n-n']_q\,  v_{n - 1,n'}\,,
\quad
\tilde{x}_{2,r}^+ v_{n,n'} =  q^{r(- 2n'+3)}\, v_{n,n' - 1}\,,
\\
&\tilde{x}_{1,r}^- v_{n,n'} 
= \frac{q^{n- n'+2-2nr} }{q - q^{-1}}\, v_{n + 1,n'}\,,
\\
&\tilde{x}_{2,r}^- v_{n,n'} = q^{2n' - n +2+ r( - 2n'+1)} [n'+1]_q [n-n']_q\, v_{n,n' + 1}\,,
\\
&\tilde{\phi}_1^+(z)v_{n,n'} 
= \frac{1 -q^{- 2n' + 2}z}{(1 - q^{- 2n}z ) (1 -q^{-2n+2}z)}\, v_{n,n'}\,,
\\
&\tilde{\phi}_1^-(z)v_{n,n'} 
= - q^{4n - 2n'}\frac{z^{-1}(1 - q^{2n' - 2}z^{-1})}{(1 - q^{2n}z^{-1} ) (1 -q^{2n - 2}z^{-1})}\, v_{n,n'}\,,
\\
&\tilde{\phi}_2^{\pm}(z)v_{n,n'}
 = \frac{(1 - q^{3}z) (1 - q^ {-2n+1}z)}{(1 -q^{- 2n'+1}z) (1 -q^{- 2n'+3}z)}\, v_{n,n'}\,.
\end{align*}
These examples have appeared in \cite{BLZ,BHK}
as representations of the Borel algebra, 
but the action of the full asymptotic algebra was not discussed.

\subsection{General case}\label{asymptmoins}

We now proceed to the construction of $L_{i,1}^-$ in the general case. 
Since a direct computational method is hardly applicable in general,
we take an alternative approach. 

Fix $i\in I$, and consider a family of KR modules labeled by $k\ge0$:
\begin{align*}
V_k=L(M_k)\,,
\quad M_k = Y_{i,q_i^{-1}}Y_{i,q_i^{-3}}\cdots Y_{i,q_i^{-2k+1}}\,,
\end{align*}
where we set $M_0=1$.

For each $k,l$ satisfying $k\ge l\ge 0$, we decompose $M_k$
 as 
$M_k^{\ge (-2l+1)d_i}M_k^{<(-2l+1)d_i}$, so that $M_k^{\ge(-2l+1)d_i}=M_l$. 
Let 
\begin{align}
F_{k,l}~:~V_l\longrightarrow V_k^{\ge (-2l+1)d_i}
\label{Fkl}
\end{align}
be the corresponding isomorphism of $U_q(\g)^+$-modules given in Theorem \ref{H3}. 
Fixing a highest weight vector $v_k\in V_k$ we normalize 
\eqref{Fkl} by $F_{k,l}v_l=v_k$. We have then for all $k\ge l\ge m$
\begin{align*}
&F_{k,l}\circ F_{l,m}=F_{k,m}\,,
\quad 
F_{k,k}=\mathrm{id}\,.
\end{align*}
Thus $(\{V_k\},\{F_{k,l}\})$ constitutes an inductive system of $U_q(\g)^+$-modules. 
Set
\begin{align*}
V_\infty=\underset{\longrightarrow}{\lim}\ 
V_k\,,\quad 
v_\infty=F_{\infty,k}\,v_k\,, 
\end{align*}
where 
$F_{\infty,k}: V_k\to V_\infty$
denotes the injective morphism satisfying the condition $F_{\infty,k}\circ F_{k,l}=F_{\infty,l}$. 

It is known \cite{n, hcr} that, in the limit $k\to \infty$,  
the normalized $q$-character $\tilde{\chi}_q(V_k)$
converges
to a formal power series in $\ZZ[[A_{i,q^a}^{-1}]]_{i\in I, a\in\Z}$.
In particular, the dimension of each weight subspace of $V_k$ stabilizes
as $k\to\infty$. 

\begin{prop}\label{cvg} 
For $k\ge l\ge 0$ we have
\begin{align}
&\phi^\pm_i(z)\, F_{k,l}=q_i^{k-l}\frac{1-q_i^{-2k}z}{1-q_i^{-2l}z}\times
F_{k,l}\,\phi^\pm_i(z)\,,
\label{Fkl-phi1}\\
&\phi^\pm_j(z)\, F_{k,l}=F_{k,l}\, \phi^\pm_j(z)\quad (j\neq i)\,.
\label{Fkl-phi2}
\end{align}
For each $l$, the limits 
$\lim_{k\to\infty}F_{k,l}^{-1} \tilde{\phi}^\pm_i(z)  F_{k,l}$ 
exist in $\mathop{\rm End}(V_l)$ and gives rise to an endomorphism of $V_\infty$. 
\end{prop}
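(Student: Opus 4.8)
The plan is to prove the two displayed intertwining formulas \eqref{Fkl-phi1}--\eqref{Fkl-phi2} directly from Theorem \ref{H3}(ii), and then to extract the asymptotic limit by rescaling the $\tilde\phi^\pm_i$ action so that the $k$-dependence is absorbed into an explicit scalar prefactor whose $k\to\infty$ limit is manifestly finite. First I would apply \eqref{Fphi} to the decomposition $M_k=M_k^{\ge(-2l+1)d_i}M_k^{<(-2l+1)d_i}$ with $M_k^{\ge(-2l+1)d_i}=M_l$, so that $F_{k,l}$ is the map $F$ of Theorem \ref{H3} for the dominant monomial $M=M_k$ at level $l'=(-2l+1)d_i$. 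The scalar correction in \eqref{Fphi} is then $M_k^{<(-2l+1)d_i}\bigl(\phi^\pm_j(z)\bigr)$. Since $M_k^{<(-2l+1)d_i}=Y_{i,q_i^{-2l-1}}Y_{i,q_i^{-2l-3}}\cdots Y_{i,q_i^{-2k+1}}$ involves only the variables $Y_{i,\bullet}$, its evaluation on $\phi^\pm_j(z)$ is trivial for $j\neq i$, giving \eqref{Fkl-phi2} immediately, while for $j=i$ one computes a telescoping product

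\begin{align*}
M_k^{<(-2l+1)d_i}\bigl(\phi^\pm_i(z)\bigr)
=\prod_{m=l+1}^{k}q_i\frac{1-q_i^{-2m}z}{1-q_i^{-2m+2}z}
=q_i^{k-l}\frac{1-q_i^{-2k}z}{1-q_i^{-2l}z}\,,
\end{align*}

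which is exactly the prefactor in \eqref{Fkl-phi1}. This reduces the first pair of assertions to a clean bookkeeping of the evaluation morphism $\mathcal{M}\to\mfr$ applied to a Kashiwara-type "tail" monomial.

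For the second assertion, I would rewrite \eqref{Fkl-phi1} in terms of the untwisted generators $\tilde\phi^\pm_i(z)=\kappa_i\phi^\pm_i(z)$ of the asymptotic algebra. Conjugating by $F_{k,l}$ gives $F_{k,l}^{-1}\,\phi^\pm_i(z)\,F_{k,l}=q_i^{k-l}\frac{1-q_i^{-2k}z}{1-q_i^{-2l}z}\,\phi^\pm_i(z)$ as operators on $V_l$, and the point is that the scalar $q_i^{k-l}\frac{1-q_i^{-2k}z}{1-q_i^{-2l}z}$, although divergent as $k\to\infty$ because of the $q_i^{k-l}$ factor (recall $|q|>1$), is precisely cancelled by the $\kappa_i$-normalization built into $\tilde\phi^\pm_i(z)$; concretely, $\kappa_i=k_i^{-1}$ contributes a compensating $q_i^{-(k-l)}$ on the relevant weight space. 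Thus $F_{k,l}^{-1}\,\tilde\phi^\pm_i(z)\,F_{k,l}$ carries a prefactor whose only surviving $k$-dependence is the rational factor $1-q_i^{-2k}z$, which converges to $1$ coefficientwise as $k\to\infty$ since $|q_i^{-2k}z|\to 0$ in the sense of formal power series (each coefficient stabilizes). I would then verify that these limiting operators are compatible with the inductive structure, i.e. intertwine the maps $F_{\infty,l}$, so that they descend to a well-defined endomorphism of $V_\infty=\varinjlim V_k$.

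The main obstacle, and the step deserving the most care, is the convergence argument for the limiting operator: one must show not merely that the \emph{scalar} prefactor converges but that the conjugated operator $F_{k,l}^{-1}\tilde\phi^\pm_i(z)F_{k,l}$ has a genuine limit in $\mathrm{End}(V_l)$ and that these limits are consistent across the tower. Here I would invoke the known stabilization of $\tilde\chi_q(V_k)$ in $\ZZ[[A_{i,q^a}^{-1}]]$ (from \cite{n,hcr}) to guarantee that each weight space of $V_l$ is eventually mapped isomorphically into $V_k$ and that the matrix entries of $\tilde\phi^\pm_i(z)$ stabilize; combined with the explicit prefactor this yields a well-defined limit. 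The compatibility with $F_{\infty,k}\circ F_{k,l}=F_{\infty,l}$ then follows from the cocycle relation $F_{k,l}\circ F_{l,m}=F_{k,m}$ together with the factorization of the scalar prefactor, ensuring the limit defines an honest endomorphism of the direct limit rather than a family of unrelated operators on the finite levels.
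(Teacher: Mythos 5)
Your proposal is correct and essentially reproduces the paper's argument: you derive \eqref{Fkl-phi1}--\eqref{Fkl-phi2} from \eqref{Fphi} of Theorem \ref{H3} by evaluating the tail monomial $M_k^{<(-2l+1)d_i}$ via the telescoping product (the paper computes $M_k\bigl(\phi^\pm_j(z)\bigr)$ directly, which amounts to the same calculation), and your convergence argument --- the $\kappa_i$-twist cancels the divergent $q_i^{k-l}$, leaving the scalar $\tfrac{1-q_i^{-2k}z}{1-q_i^{-2l}z}$ whose coefficients converge for $|q|>1$, with compatibility across the tower following from the cocycle relation $F_{k,l}\circ F_{l,m}=F_{k,m}$ --- is exactly what lies behind the paper's brief ``clearly''. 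One minor point: your appeal to the stabilization of $\tilde{\chi}_q(V_k)$ from \cite{n,hcr} is superfluous (and not quite what that result provides, since it controls dimensions of weight spaces rather than matrix entries of operators): once \eqref{Fkl-phi1} is established, $F_{k,l}^{-1}\tilde{\phi}^\pm_i(z)F_{k,l}$ is \emph{exactly} an explicit scalar multiple of the fixed operator $\tilde{\phi}^\pm_i(z)$ on $V_l$, so convergence of the scalar is all that is needed.
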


\begin{rem}\label{remcvg} 
It follows from Proposition \ref{cvg} 
that the operator $F_{k,l}^{-1} \tilde{\phi}_j^\pm(z) F_{k,l}$ 
is of the form $A(z) + q_i^{-2k} B(z)$ 
where the operators $A(z)$, $B(z)$ do not depend on $k$ and are 
well-defined without any assumption on $|q|$. So the limiting 
operator $A(z)$ on $V_\infty$ makes sense
for such a $q$. 
\end{rem}

\begin{proof}
Formulas \eqref{Fkl-phi1}, \eqref{Fkl-phi2} follow from 
\eqref{Fphi} and 
\begin{align*}
M_k\bigl(\phi^\pm_j(z)\bigr)=
\begin{cases}
q_i^k\displaystyle{\frac{1-q_i^{-2k}z}{1-z}} & (j=i)\,,\\
1 & (j\neq i)\,.
\end{cases}
\end{align*}
Clearly $\lim_{k\to\infty}F_{k,l}^{-1} \tilde{\phi}^\pm_j(z)  F_{k,l}$ exists, and 
\begin{align*}
\bigl(\lim_{k\to\infty}F_{k,l}^{-1} \tilde{\phi}^\pm_j(z)  F_{k,l}\bigr) \circ F_{l,m}
=F_{l,m}\circ\bigl(
\lim_{k\to\infty}F_{k,m}^{-1} \tilde{\phi}^\pm_j(z)  F_{k,m} \bigr)\,
\end{align*}
holds for all $l\ge m$. 
Hence they give rise to a well defined operator 
on $V_\infty$. 
\end{proof}

Next let us study the convergence property for $\tilde{x}^-_{j,r}$.
\begin{lem}
Suppose $k\ge l\ge 0$. Then we have
\begin{align*}
\tilde{x}^{-}_{j,r}V_k^{\ge(-2l+1)d_i}
\subset V_k^{\ge(-2l+1-2\delta_{i,j})d_i}\,
\quad(j\in I,\ r\in\Z).
\end{align*}
\end{lem}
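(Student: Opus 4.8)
My strategy is to reduce the claim about $\tilde{x}^-_{j,r}$ to the already-established behaviour of $\tilde{\phi}^\pm_i(z)$ together with the defining relations of $\Utg$. The key observation is that the subspace $V_k^{\ge (-2l+1)d_i}$ is, by its very definition as
\[
V_k^{\ge (-2l+1)d_i}=\bigoplus_{m\in M_k\,\Z[A^{-1}_{i,q^{r+d_i}}]_{i\in I,\,r\ge -2l+1}}(V_k)_m\,,
\]
characterized by the $\ell$-weights that occur: a weight vector lies in it precisely when its monomial is obtained from $M_k$ by multiplying by factors $A^{-1}_{i,q^{a}}$ with $a\ge(-2l+1)d_i$. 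So I would track how applying $\tilde{x}^-_{j,r}$ changes the monomial of an $\ell$-weight vector, and check that the resulting monomial still lies in the allowed set after relaxing the cutoff by one step in the $j=i$ direction.

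Concretely, first I would recall that $\tilde{x}^-_{j,r}$ has weight $-\alpha_j$, so on a weight vector of monomial $m$ it produces a combination of vectors whose monomials are of the form $m\,A^{-1}_{j,c}$ for various spectral parameters $c$ (this is the standard $q$-character mechanism: lowering operators multiply monomials by $A^{-1}_{j,c}$). For $j\neq i$ the factor $A^{-1}_{j,c}$ does not touch the $i$-part of the monomial, so the condition defining $V_k^{\ge(-2l+1)d_i}$ is preserved and the inclusion holds even without relaxing the cutoff. For $j=i$, the factor $A^{-1}_{i,c}$ shifts the $i$-part, and I must verify that the spectral parameter $c$ produced can decrease the relevant exponent by at most one step of size $2d_i$, which is exactly why the right-hand side is enlarged to $V_k^{\ge(-2l+1-2\delta_{i,j})d_i}$.

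To pin down the allowed range of $c$, I would use Proposition \ref{cvg}, which records the eigenvalue of $\phi^\pm_i(z)$ on $F_{k,l}$, combined with the commutation relation between $\tilde{\phi}^\pm_i(z)$ and $\tilde{x}^-_{i,r}$ from the defining relations of $\Utg$. The point is that the relation
\[
\tilde{\phi}_{i,m}^+\tilde{x}_{j,r}^{-}
= \sum_{0\leq l\leq m}q_i^{- l C_{i,j}}\tilde{x}_{j,r+l}^-\tilde{\phi}_{i,m-l}^+
- \sum_{0\leq l\leq m-1}q_i^{- (l-1) C_{i,j}}\tilde{x}_{j,r+l+1}^-\tilde{\phi}_{i,m-l-1}^+
\]
controls exactly which new $\phi^+_i$-eigenvalues can appear after applying a lowering operator, and hence which monomial factors $A^{-1}_{i,c}$ are admissible. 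I expect the main obstacle to be bookkeeping: matching the combinatorial description of the truncation $V_k^{\ge \ell}$ with the analytic action of $\phi^\pm_i(z)$, and being careful that the spectral parameters of the new $A^{-1}_{i,c}$-factors never fall below the relaxed threshold. Once the extreme case (the single worst-case factor that shifts the $i$-cutoff by exactly one) is identified and checked, the inclusion follows. The $j\neq i$ case is immediate from the preceding remarks and needs no relaxation of the bound.
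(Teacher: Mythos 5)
Your plan has a genuine gap at its central step. You propose to deduce, from the $\tilde{\phi}^+_i$--$\tilde{x}^-_{j,r}$ relations in $\Utg$ together with Proposition \ref{cvg}, that a vector in an $\ell$-weight space $(V_k)_m$ with $m$ in the allowed lattice is sent by $\tilde{x}^-_{j,r}$ only into spaces $(V_k)_{mA^{-1}_{j,c}}$ with $c$ above the (relaxed) threshold. But those relations only establish the general \emph{shape} of the shift (multiplication by some $A^{-1}_{j,c}$); which spectral parameters $c$ actually occur is a module-theoretic question, and the a priori admissible ones include bad values. Concretely: for every monomial $m\in M_k\,\Z[A^{-1}_{i',q^{r+d_{i'}}}]_{i'\in I,\,r\ge(-2l+1)d_i}$, the factor $Y_{i,q_i^{-2k+1}}$ of $M_k$ is never cancelled (cancelling it would require a factor $A^{-1}_{i,c}$ with $c=q_i^{-2k}$ or $q_i^{-2k+2}$, below the cutoff when $k>l$), so the standard constraint on lowering operators (the analogue of \cite[Lemma 4.4]{hcr}) permits a component in $(V_k)_{mA^{-1}_{i,q_i^{-2k+2}}}$, which for $k\gg l$ lies far below the relaxed threshold. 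Ruling out such components cannot be done by monomial bookkeeping: it would require detailed knowledge of $\chi_q(V_k)$ (already in the $\widehat{\mathfrak{sl}}_2$ example one needs that only \emph{consecutive} strings of $A^{-1}$'s occur, and no explicit $q$-character formula is available in general type). This is precisely the structural input the paper supplies instead: by Theorem \ref{H3}, $V_k^{\ge(-2l+1)d_i}$ is the image of $V_l\otimes v$ under \eqref{LLL}, where $v$ is the highest $\ell$-weight vector of the tail KR module $L(M_k^{<(-2l+1)d_i})$; Damiani's coproduct estimates (Theorem \ref{apco}) then give $x^-_{j,r}(w\otimes v)\in V_l\otimes \C v + w\otimes x^-_{j,r}v$, since the terms in $U_q(\Glie)\otimes(U_q(\Glie)X^+)$ annihilate $v$; and \cite[Lemma 4.4]{hcr}, applied to the \emph{single} vector $v$ (the top of a KR module, which is thin), pins $x^-_{i,r}v$ to one monomial, dropping the cutoff by exactly one step of size $2d_i$. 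Your proposal never uses $F_{k,l}$ or Theorem \ref{H3}, and without them the worst-case components are not excluded.

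There is also a secondary error in your treatment of $j\neq i$. In the definition of $L(M)^{\ge l}$ the index of $A^{-1}_{i,q^{r+d_i}}$ runs over \emph{all} of $I$: the cutoff $r\ge(-2l+1)d_i$ constrains the spectral parameters of the $A^{-1}$-factors of every colour, not only colour $i$. So the assertion that ``$A^{-1}_{j,c}$ does not touch the $i$-part of the monomial, hence membership is preserved'' is not a valid argument; you would still have to show $c=q^{r+d_j}$ with $r\ge(-2l+1)d_i$. In the paper's proof the $j\neq i$ case holds for a different reason: $x^-_{j,r}v=0$ on the tail's highest $\ell$-weight vector (no monomial of weight $\varpi(M_k^{<(-2l+1)d_i})\,\overline{\alpha}_j^{-1}$ occurs in that KR module for $j\neq i$), so the image stays inside $V_l\otimes\C v$, i.e.\ inside $V_k^{\ge(-2l+1)d_i}$ itself, with no relaxation needed.
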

\begin{proof}
Let $v$ be a highest $\ell$-weight vector of $V_k^{<(-2l+1)d_i}$, and let $w\in V_l$. 
From the formula for coproduct in Theorem \ref{apco}
we have
\begin{align*}
&x^{-}_{j,r}(w\otimes v)=\sum_{p=0}^{r-1}x^-_{j,r-p}w \otimes \phi^+_{j,p}v+w\otimes x^-_{j,r}v
\quad (r>0)\,,
\\
&x^{-}_{j,-r}(w\otimes v)=\sum_{p=0}^{r-1}x^-_{j,-r-p} w \otimes \phi^-_{j,-p}v+w\otimes x^-_{j,-r}v
\quad (r\ge0)\,.
\end{align*}
It follows that
\begin{align*}
F_{k,l}\bigl(x^-_{j,r}(w\otimes v)\bigr)\in V_k^{\ge(-2l+1)d_i}+F_{k,l}\bigl(w\otimes x^-_{j,r}v\bigr)\,
\quad (r\in \Z). 
\end{align*}
If $j\neq i$, the second term is absent. If $j=i$, then 
by \cite[Lemma 4.4]{hcr}, $x^-_{i,r}v$ belongs to the generalized 
eigenspace corresponding to the monomial 
$M_k^{<(-2l+1)d_i}A^{-1}_{i,q_i^{-2l-1}}$. 
The assertion of the Lemma follows from these.
\end{proof}

\begin{prop}\label{cvg2} 
For $j\in I$, $r\in\Z$ and $k > l\ge 1$, the operator 
$F_{k,l+1}^{-1} \tilde{x}^-_{j,r} F_{k,l}
\in\mathop{\rm Hom}(V_l,V_{l+1})$ is of the form 
\begin{equation}\label{corform}
F_{k,l+1}^{-1} \tilde{x}^-_{j,r} F_{k,l} = C + q_i^{-2k} D\,,
\end{equation} 
where $C, D\in\mathop{\rm Hom}(V_l,V_{l+1})$
do not depend on $k$. In particular, the limit
\begin{align}
\lim_{k\to\infty}F_{k,l+1}^{-1} \tilde{x}^-_{j,r} F_{k,l}
\in\mathop{\rm Hom}(V_l,V_{l+1})
\label{conv-x}
\end{align}
exists and gives rise to an endomorphism of $V_\infty$. 
\end{prop}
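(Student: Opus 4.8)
The plan is to evaluate $F_{k,l+1}^{-1}\tilde{x}^-_{j,r}F_{k,l}$ on an arbitrary $w\in V_l$ by realizing $F_{k,l}$ through the tensor product decomposition of Theorem \ref{H3}. Writing $M_k=M_l\,N$ with $N=M_k^{<(-2l+1)d_i}$, that theorem presents $F_{k,l}$ as $w\mapsto\iota(w\otimes v^<)$, where $\iota\colon L(M_l)\otimes L(N)\to V_k$ is the surjective $U_q(\g)$-module map of \eqref{LLL} and $v^<$ is a highest $\ell$-weight vector of $L(N)$. Since $\iota$ intertwines the $\Utg$-action and $v^<$ is annihilated by $X^+$, the term $U_q(\g)\otimes(U_q(\g)X^+)$ in the coproduct formulas \eqref{mplus} and \eqref{mmoins} acts by zero on $w\otimes v^<$. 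Collecting the surviving terms and absorbing the factors $\kappa_j=k_j^{-1}$ into the generators (using $\kappa_j\phi^+_{j,p}=\tilde{\phi}^+_{j,p}$ and $\kappa_j x^-_{j,s}=\tilde{x}^-_{j,s}$), I would obtain, for $r>0$,
\[
\tilde{x}^-_{j,r}(w\otimes v^<)=\tilde{x}^-_{j,r}w\otimes v^<+\kappa_j w\otimes\tilde{x}^-_{j,r}v^<+\sum_{1\le p<r}\tilde{x}^-_{j,r-p}w\otimes\tilde{\phi}^+_{j,p}v^<,
\]
together with the analogous identity for $r\le0$ in which $\tilde{\phi}^-$ replaces $\tilde{\phi}^+$.

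Two of the three families are immediate. For the first term, $\iota(\tilde{x}^-_{j,r}w\otimes v^<)=F_{k,l}(\tilde{x}^-_{j,r}w)$, and since $F_{k,l}=F_{k,l+1}\circ F_{l+1,l}$, applying $F_{k,l+1}^{-1}$ produces the $k$-independent vector $F_{l+1,l}(\tilde{x}^-_{j,r}w)$. For the summation terms, the scalars $\tilde{\phi}^+_{j,p}v^<$ are read off from the $\ell$-weight of $v^<$: a short telescoping computation gives $\tilde{\phi}^+_i(z)v^<=\tfrac{1-q_i^{-2k}z}{1-q_i^{-2l}z}v^<$ for $j=i$ and $\tilde{\phi}^+_j(z)v^<=v^<$ for $j\neq i$, so each $\tilde{\phi}^+_{j,p}v^<$ is of the form $C_p+q_i^{-2k}D_p$ with $C_p,D_p$ independent of $k$. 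This is precisely where the factor $q_i^{-2k}$ originates. Applying $F_{k,l+1}^{-1}$ to $\iota(\tilde{x}^-_{j,r-p}w\otimes v^<)=F_{k,l}(\tilde{x}^-_{j,r-p}w)$ as before then yields the announced shape $C+q_i^{-2k}D$ for these contributions.

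The essential difficulty lies in the middle term $\kappa_j w\otimes\tilde{x}^-_{j,r}v^<$, since here the second tensor factor is genuinely excited and its image under $\iota$ leaves $V_k^{\ge(-2l+1)d_i}$. For $j\neq i$ the preceding lemma already confines this contribution to $V_k^{\ge(-2l+1)d_i}$, and I would verify that the relevant structure constants are $k$-independent, so that $F_{k,l+1}^{-1}$ returns a $k$-independent operator. For $j=i$ I would invoke \cite[Lemma 4.4]{hcr}, according to which $x^-_{i,r}v^<$ lies in the one-dimensional generalized $\ell$-weight space of $L(N)$ of monomial $N A^{-1}_{i,q_i^{-2l-1}}$; thus $\tilde{x}^-_{i,r}v^<$ is a scalar multiple of a single fixed vector $w_1$, whose image $\iota(\kappa_i w\otimes w_1)$ already lies in $V_k^{\ge(-2l-1)d_i}=F_{k,l+1}(V_{l+1})$. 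The only $k$-dependence is then carried by that scalar, which has the form $q_i^{-k}$ times a $q_i$-integer of argument $k$ (exactly as the factor $q^{-k}[k-j]_q$ arising in the $\widehat{\mathfrak{sl}}_2$ computation of Section \ref{premex}); this is again of the form $C+q_i^{-2k}D$. I expect this step---identifying $\tilde{x}^-_{i,r}v^<$ with a fixed vector via \cite[Lemma 4.4]{hcr} and extracting the precise $k$-dependence of the associated scalar through the two nested tensor decompositions at levels $l$ and $l+1$---to be the main obstacle.

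Finally I would collect the three contributions to conclude that $F_{k,l+1}^{-1}\tilde{x}^-_{j,r}F_{k,l}=C+q_i^{-2k}D$ with $C,D\in\mathop{\rm Hom}(V_l,V_{l+1})$ independent of $k$, as claimed; the hypothesis $|q|>1$ then makes $q_i^{-2k}\to0$, so the limit \eqref{conv-x} exists and equals $C$. Compatibility of these limiting maps with the transition maps $F_{l+1,l}$, hence the descent to a well-defined endomorphism of $V_\infty$, would follow from the cocycle identities $F_{k,l}=F_{k,l+1}\circ F_{l+1,l}$ exactly as in the proof of Proposition \ref{cvg}.
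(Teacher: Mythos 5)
Your overall skeleton (realizing $F_{k,l}$ as $w\mapsto\iota(w\otimes v^<)$ via Theorem \ref{H3} and \eqref{LLL}, killing the $U_q(\Glie)\otimes(U_q(\Glie)X^+)$ tail of \eqref{mplus}--\eqref{mmoins} on the highest $\ell$-weight vector, and reading off the scalars $\tilde{\phi}^\pm_{j,\pm p}v^<$, which are indeed affine in $q_i^{-2k}$) is sound, and the $j\neq i$ middle term is even easier than you make it: since $\phi_j^\pm(z)v^<=v^<$ for $j\neq i$, the vector $v^<$ generates a trivial module over the $j$-th loop $\mathfrak{sl}_2$, so $x^-_{j,r}v^<=0$ outright. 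But at the step you yourself flag as the main obstacle --- the term $\kappa_i w\otimes\tilde{x}^-_{i,r}v^<$ --- the proposal is genuinely incomplete, and not in a way that a routine verification fills. First, the one-dimensionality of the generalized $\ell$-weight space of $L(N)$ at $NA^{-1}_{i,q_i^{-2l}}$ is asserted, not proved (it is true, because the weight space of a KR module at highest weight minus $\alpha_i$ is one-dimensional, but that needs an argument). Second, and more seriously, $L(N)$ and hence your vector $w_1$ depend on $k$, so ``a scalar multiple of a single fixed vector'' is not yet meaningful: you need a $k$-coherent normalization of $w_1$, you need the structure constant $c_r(k)$ in $\tilde{x}^-_{i,r}v^<=c_r(k)w_1$ to be affine in $q_i^{-2k}$ (you assert this only by analogy with the explicit $\widehat{\mathfrak{sl}}_2$ formulas; for general $\g$ these constants are not computed in the paper, and obtaining them is essentially as hard as the proposition itself), and you need $F_{k,l+1}^{-1}\iota(\kappa_i w\otimes w_1)$ to be $k$-independent, which requires a compatibility (``associativity'') between the two tensor decompositions $M_k=M_lN$ and $M_k=M_{l+1}N'$ underlying $F_{k,l}$ and $F_{k,l+1}$ --- a statement Theorem \ref{H3} does not provide. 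Saying you ``expect'' this step to work is precisely the missing content of the proof.

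It is instructive that the paper's actual proof never touches this term. It proceeds by induction on the weight: writing $w_k=F_{k,l+1}^{-1}\tilde{x}^-_{j,r}F_{k,l}v$, one commutes an arbitrary raising operator $x^+_{j',r'}$ (which commutes with the $F$'s by \eqref{Fx}) through, producing the same operator evaluated on the higher-weight vector $x^+_{j',r'}v$ --- affine in $q_i^{-2k}$ by the induction hypothesis --- plus a $\tilde{\phi}^\pm_{j,r+r'}$ contribution already known to be affine in $q_i^{-2k}$ by Proposition \ref{cvg} and Remark \ref{remcvg}. Then comes the key linear-algebra observation: since $V_{l+1}$ is simple, the joint kernel of the operators $x^+_{j',r'}$ on weight spaces strictly below the highest weight is zero, so the map sending a vector of the finite-dimensional weight space to the collection of its raised images is injective, and the affine form of all the $x^+_{j',r'}w_k$ forces the affine form \eqref{corform} of $w_k$ itself. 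That one trick replaces the entire explicit analysis you would have to carry out; if you want to salvage your direct approach, you would have to supply the KR structure constants and the nested-decomposition compatibility, whereas adopting the injectivity argument lets you discard the problematic term altogether.
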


\begin{proof}
Let $\omega_l\in\tb^*$ be the highest weight of $V_l$. 
We prove the convergence on each weight subspace 
$(V_l)_{\omega_l\, \overline{\beta}^{-1}}$ by induction 
on $\beta\in Q$ with respect to the ordering \eqref{partial}.  
If $\beta\not\in Q^+=\oplus_{i\in I}\Z_{\ge0}\alpha_i$, 
then there is nothing to show. 
Suppose the assertion is true for elements of $Q$ smaller than $\beta$, and   
let $v\in (V_l)_{\omega_l\, \overline{\beta}^{-1}}$.
Since $x^+_{j',r'}$ commutes with $F_{k,l}$ for any 
$j'\in I$ and $r'\in \Z$, we have
\begin{align*}
&q_{j'}^{-C_{j',j}}x^+_{j',r'}
\left(F_{k,l+1}^{-1}\tilde{x}^{-}_{j,r} F_{k,l}\right)v
=\left(F_{k,l+1}^{-1}\tilde{x}^{-}_{j,r} F_{k,l}\right)x^+_{j',r'}v
\\
&\quad +\delta_{j',j}\frac{1}{q_j-q_j^{-1}}
F_{k,l+1}^{-1}\left(\tilde{\phi}^{+}_{j,r+r'}-\tilde{\phi}^{-}_{j,r+r'}\right)
 F_{k,l}v\,.
\end{align*}
By the induction hypothesis, the first term in the right hand side 
is of the form (\ref{corform}). 
The operator in second term which is equal to
$$
\delta_{j',j}\frac{1}{q_j-q_j^{-1}}
F_{l+1,l}\left(F_{k,l}^{-1}\left(\tilde{\phi}^{+}_{j,r+r'}
-\tilde{\phi}^{-}_{j,r+r'}\right) F_{k,l}\right)
$$
is also of the form (\ref{corform}) 
due to Remark \ref{remcvg}. Hence 
we have a sequence of vectors 
$w_k=F_{k,l+1}^{-1}\tilde{x}^{-}_{j,r} F_{k,l}v$
in a finite dimensional vector space 
$(V_{l+1})_{\omega_{l+1}\,\beta\ga_j^{-1}}$
such that for any $j',r'$, the vector 
$x^{+}_{j',r'}w_k$ is of the form 
\begin{equation}\label{corformdeux}
x^{+}_{j',r'}w_k = C_{j',r'} + q_i^{-2k} D_{j',r'}
\end{equation} 
where the vectors $C_{j',r'}$, $D_{j',r'}$ do not depend on $k$. 
On the other hand, since $V_{l+1}$ is simple, 
the joint kernel of the $x^+_{j',r'}$ on weight subspaces of 
weight lower than $\omega_{l+1}$
is zero. It follows that the sequence $\{w_k\}$ 
is also of the form (\ref{corformdeux}).
So the operator $F_{k,l+1}^{-1} \tilde{x}^-_{j,r} F_{k,l}$ is of the form (\ref{corform}). The existence
of the limit follows immediately.
The well-defineness on $V_\infty$ 
holds by the same reason as in Proposition \ref{cvg}.
\end{proof}

\begin{rem}\label{remcvg2} By construction, 
the limiting operator $C$ in Proposition \ref{cvg2}
is well-defined 
for all $q$ which is not a root of unity.
\end{rem}

Taking the limit $k\rightarrow \infty$   
we get a structure of $\Utg$-module on $V_\infty$, 
since the relations of the algebra are preserved. 
For example, 
on $V_l$ we have
\begin{align*}
&q_i^{-C_{i,j}}\left(F_{k,l+1}^{-1}\tilde{x}_{i,r}^+F_{k,l+1}\right)
\left(F_{k,l+1}^{-1}\tilde{x}_{j,r'}^-F_{k,l}\right) - 
\left(F_{k,l+1}^{-1}\tilde{x}_{j,r'}^-F_{k,l}\right)
\left(F_{k,l}^{-1}\tilde{x}_{i,r}^+F_{k,l}\right)
\\
&= 
\delta_{i,j}F_{l+1,l}\left(F_{l,k}^{-1}
\frac{\tilde{\phi}^+_{i,r+r'}
-\tilde{\phi}^-_{i,r+r'}}{q_i-q_i^{-1}}F_{k,l}\right)\,,
\end{align*}
and so the relation is satisfied by the asymptotic operators on $V_\infty$.
In particular, 
since $F_{k,l}^{-1} k_i^{-1}  F_{k,l}=q_i^{-k+l}k_i^{-1}$, 
$\kappa_i$ acts as $0$ on $V_\infty$. 

This structure of $\Utg$-module on $V_\infty$ makes sense 
without any assumption on $|q|$.
Indeed, the action of the Borel algebra on KR modules 
is well-defined for such a $q$ : there is a basis of $V_k$ 
such that the coefficients 
of the action of the generators $e_i$, $f_i$, $k_i^{\pm 1}$ 
($0\leq i\leq n$) on $V_k$ are Laurent polynomials in $q$, 
see \cite[Section 4]{cp3}. Although the process of taking 
the limit is not well-defined if $|q|\leq 1$,
it suffices to check that the limiting operators on $V_\infty$ 
make sense at $q$. 
The operators $F_{k,l}^{-1} \tilde{x}_{j,m}^+ F_{k,l}$ are 
constant when $k\rightarrow \infty$, so the corresponding 
limiting operators 
on $V_\infty$ make sense for $q$. We have seen in Remarks 
\ref{remcvg} and \ref{remcvg2} that the limiting operators 
of $F_{k,l}^{-1} \tilde{\phi}^\pm(z) F_{k,l}$ 
and of $F_{k,l}^{-1} \tilde{x}_{j,m}^- F_{k,l}$ make sense for $q$. 
So, for the next results, we do not assume necessarily that $|q| > 1$.

The $\Utg$-module $V_\infty$
has a unique $Q$-grading such that $v_\infty$ has degree $0$.

Applying Proposition \ref{asym} we obtain the following. 

\begin{thm}
The space $V_\infty$ has a structure of a $U_q(\mathfrak{b})$-module
which is in category $\mathcal{O}$. The vector $v_\infty$ satisfies 
$e_j\, v_\infty = 0$ 
for any $j\in I$ and has $\ell$-weight 
\begin{align*} 
\Psib(z)=(1,\cdots,\overset{i-th}
{(1-z)^{-1}},\cdots,1)\,.
\end{align*}
\end{thm}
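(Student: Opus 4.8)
The plan is to assemble the three pieces already prepared in this section. First I would invoke the construction carried out above: Propositions \ref{cvg} and \ref{cvg2} show that the limits of the conjugated operators $F_{k,l}^{-1}\tilde{\phi}^\pm_j(z)F_{k,l}$ and $F_{k,l+1}^{-1}\tilde{x}^-_{j,r}F_{k,l}$ exist and descend to well-defined endomorphisms of $V_\infty$, while the operators $\tilde{x}^+_{j,m}$ are already independent of $k$ under the maps $F_{k,l}$. Since each defining relation of $\Utg$ is a polynomial identity among these generators, and each such identity holds on every $V_k$ (where the genuine Drinfeld generators act), taking $k\to\infty$ transports the relation to $V_\infty$, as illustrated by the displayed computation just above the theorem. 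Thus $V_\infty$ is a $\Utg$-module, with $\kappa_i$ acting as $0$ because $F_{k,l}^{-1}k_i^{-1}F_{k,l}=q_i^{-k+l}k_i^{-1}\to 0$.

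Next I would verify that $V_\infty$ is $Q$-graded in the sense of Section \ref{subsec:Utg}, using the unique grading for which $\deg v_\infty = 0$; the grading conditions $\tilde{x}^\pm_{j,r}V^{(\alpha)}\subset V^{(\alpha\pm\alpha_j)}$ and $\tilde{\phi}^\pm_{j,\pm m}V^{(\alpha)}\subset V^{(\alpha)}$ follow from the corresponding weight-shifting properties on each $V_k$, which are preserved in the limit. Having a $Q$-graded $\Utg$-module, Proposition \ref{asym} then endows $V_\infty$ with a canonical $U_q(\mathfrak{b})$-module structure via \eqref{vsigma1}, with $e_i$ acting as $\tilde{x}^+_{i,0}$ and $e_0$ acting as $y$. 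The Serre relations needed there hold because they hold in $\Utg$, as guaranteed by Proposition \ref{asym} itself.

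For the assertion that $V_\infty$ lies in category $\mathcal{O}$, I would argue as follows. The weight grading of $V_\infty$ coincides, through the isomorphisms $F_{\infty,k}$, with the $q$-character data of the $V_k$; by the stabilization statement recalled before Proposition \ref{cvg} (from \cite{n,hcr}), the dimension of each weight space of $V_k$ stabilizes as $k\to\infty$ and is finite, giving condition (ii). The normalized $q$-characters converge to a formal power series in $\ZZ[[A^{-1}_{i,q^a}]]$, so all weights of $V_\infty$ lie below the single highest weight $\varpi(\Psib)$, and in fact in a single set $D(\lambda)$ with $\lambda$ the weight of $v_\infty$; this gives condition (iii), and $\tb$-diagonalizability (i) is immediate since each $V_k$ is $\tb$-diagonalizable and the $F$'s preserve weight spaces.

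Finally, the highest $\ell$-weight of $v_\infty$ is computed from Proposition \ref{cvg}: taking $l=0$ and $k\to\infty$ in \eqref{Fkl-phi1}, the scalar $q_i^{k}(1-q_i^{-2k}z)/(1-z)$ acting on $v_k$ (via $\tilde{\phi}^+_i(z)$, whose eigenvalue on $v_k$ is the trivial $1$ after normalization) converges to $(1-z)^{-1}$, while $\tilde{\phi}^\pm_j(z)$ for $j\neq i$ act as $1$ by \eqref{Fkl-phi2}; hence $\phi^+_i(z)v_\infty=(1-z)^{-1}v_\infty$ and $\Psib$ has the stated form. That $e_j v_\infty=0$ for all $j\in I$ follows since $e_j$ acts by $\tilde{x}^+_{j,0}$ (or $y$ for $j=0$), which annihilate $v_\infty$ because they annihilate each $v_k$ and commute with the $F$'s. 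The step I expect to require the most care is the passage of the full set of Drinfeld relations to the limit: one must confirm that every relation, including the quadratic and Serre-type relations mixing $\tilde{x}^-$'s at various $r$, survives under conjugation by $F_{k,l}$ in the $k\to\infty$ limit, rather than only the sample relation displayed in the text; this rests on the uniform form $C+q_i^{-2k}D$ established in Proposition \ref{cvg2} together with the compatibility with the inductive maps $F_{l,m}$.
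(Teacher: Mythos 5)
Your proposal is correct and follows essentially the same route as the paper: existence of the limiting operators via Propositions \ref{cvg} and \ref{cvg2} (with the $\tilde{x}^+_{j,m}$ constant by \eqref{Fx}), passage of the $\Utg$-relations to the limit using the uniform form $C+q_i^{-2k}D$, the observation that $\kappa_i$ acts as $0$, the $Q$-grading with $\deg v_\infty=0$ fed into Proposition \ref{asym}, membership in category $\mathcal{O}$ from the stabilization of weight multiplicities \cite{n,hcr}, and the $\ell$-weight computation from \eqref{Fkl-phi1}. One small correction: delete the parenthetical claiming that $y$ (the image of $e_0$) annihilates $v_\infty$ --- it does not (otherwise the submodule generated by $v_\infty$ would be one-dimensional, contradicting the infinite-dimensionality of $L^-_{i,1}$), and this is not needed since the theorem asserts $e_j\,v_\infty=0$ only for $j\in I$, which your argument via $\tilde{x}^+_{j,0}$ commuting with the $F_{k,l}$ correctly establishes.
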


\begin{cor}\label{Lmin}
The module $L_{i,1}^-$ is in category $\mathcal{O}$.
\end{cor}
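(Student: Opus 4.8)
The plan is to obtain this as a formal consequence of the theorem just established, together with the fact that category $\mathcal{O}$ is closed under subquotients. By that theorem, $V_\infty$ lies in category $\mathcal{O}$, and the distinguished vector $v_\infty$ satisfies $e_j\,v_\infty=0$ for all $j\in I$ and carries the $\ell$-weight $\Psib(z)=(1,\cdots,(1-z)^{-1},\cdots,1)$ with the nontrivial entry in the $i$-th slot. Comparing with \eqref{fund-rep} at $a=1$, this is exactly the highest $\ell$-weight defining $L_{i,1}^-=L(\Psib)$. Thus $v_\infty$ is a highest $\ell$-weight vector of precisely the required $\ell$-weight.

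First I would pass to the cyclic submodule $W=U_q(\mathfrak{b})\,v_\infty\subseteq V_\infty$. By construction $W$ is a highest $\ell$-weight module with highest $\ell$-weight $\Psib$, so it is a quotient of the Verma module $M(\Psib)$; by the uniqueness in Proposition \ref{simple} (equivalently, the unique maximal proper submodule of $M(\Psib)$), its unique simple quotient is $L(\Psib)=L_{i,1}^-$. Hence $L_{i,1}^-$ is a subquotient of $V_\infty$: a quotient of the submodule $W$ of a module lying in category $\mathcal{O}$. It therefore suffices to confirm that membership in $\mathcal{O}$ is inherited by submodules and quotients.

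The only point requiring verification is this closure property, and it is routine from the definition of $\mathcal{O}$. For a submodule $W\subseteq V_\infty$, the weight spaces are $W_\omega=W\cap (V_\infty)_\omega$, so $W$ is again $\tb$-diagonalizable, each $W_\omega$ stays finite-dimensional, and the weights of $W$ remain inside the finite union $\bigcup_{j}D(\lambda_j)$ provided by $V_\infty$; the argument for a quotient is identical. Applying this twice, first to $W\subseteq V_\infty$ and then to the simple quotient $L_{i,1}^-$ of $W$, yields $L_{i,1}^-\in\mathcal{O}$.

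Since the substantive analytic content, namely the convergence of the asymptotic operators and the resulting $\mathcal{O}$-membership of $V_\infty$, has already been carried out in Proposition \ref{cvg}, Proposition \ref{cvg2}, and the preceding theorem, there is no genuine obstacle at this stage. The corollary is purely a formal deduction, and I expect the proof to be only a few lines once the closure of $\mathcal{O}$ under subquotients is recorded.
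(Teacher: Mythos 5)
Your proposal is correct and matches the paper's own proof: the paper likewise passes to the submodule $V_\infty'$ of $V_\infty$ generated by $v_\infty$, notes it is a highest $\ell$-weight module of highest $\ell$-weight $\Psib$ lying in category $\mathcal{O}$, and concludes since $L_{i,1}^-$ is a quotient of $V_\infty'$. The extra detail you supply (closure of $\mathcal{O}$ under subquotients, the Verma-module factorization via Proposition \ref{simple}) is exactly the routine verification the paper leaves implicit.
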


\begin{proof}  Let $V_\infty'$
be the submodule of $V_\infty$ generated by $v_\infty$.
Then $V_\infty'$ is in category $\mathcal{O}$ of highest $\ell$-weight $\Psib$.
As $L^-_{i,1}$ is a quotient of $V'_\infty$, we get the result.\end{proof}

\section{The representations $L_{i,1}^+$}\label{asymptplus}

Let $i\in I$, and let   $V_\infty$ be 
the $\tilde{U}_{q^{-1}}(\Glie)$-module 
constructed as in the last section, 
with quantum parameter $q^{-1}$ in place of $q$ 
(we have seen at the end of the
last section that this action is defined without any assumption on $|q|$). 
We use the $Q$-grading $V_\infty=\oplus_{\beta\in Q}(V_{\infty})_\beta$
such that its highest weight vector has degree $0$. 
Then from Proposition \ref{asym}, \eqref{vsigma2}, 
there is a natural structure
of $U_q(\mathfrak{b})$-module on $V_\infty$. This representation is denoted by $V_\infty^\sigma$.
Note that the highest weight vector of $V_\infty$ becomes the lowest weight vector in $V_\infty^\sigma$.
The representation $V_\infty^\sigma$ is in category $\mathcal{O}^*$, and so we have
a structure of $U_q(\mathfrak{b})$-module on the graded dual $(V_\infty^\sigma)^*$ as in 
Section \ref{dualcat}. By Lemma \ref{autredef}, the $U_q(\mathfrak{b})$-module 
$(V_\infty^\sigma)^*$ is in  category $\mathcal{O}$. 

Let $v_\infty^*\in (V_\infty^\sigma)^*$ be defined by $v_\infty^*(v_\infty) = 1$ and $v_\infty^* = 0$
on $\bigoplus_{\omega\neq 0} (V_\infty^{\sigma})_{\omega}$. 
The vector $v_\infty^*$ satisfies 
 $e_j\, v_\infty^* = 0$
for any $j\in I$ and has $\ell$-weight 
\begin{align*} 
\Psib(z)=(1,\cdots,\overset{i-th}
{1-z},\cdots,1)\,.
\end{align*}

We get the following consequence.

\begin{cor}\label{Lpl} The module $L_{i,1}^+$ is in category $\mathcal{O}$.
\end{cor}

Let us look at the example of $\widehat{\mathfrak{sl}}_2$
in more details. 
We have 
$V_\infty = \oplus_{j=0}^\infty \CC v_j$ with the action of 
$\tilde{U}_{q^{-1}}(\widehat{\mathfrak{sl}}_2)$
\begin{align*}
&\tilde{x}_{1,r}^+v_j = q^{2r(j-1)} v_{j-1}\,,
\quad
\tilde{x}_{1,r}^-v_j = - q^{2rj-j-2}\frac{[j+1]_q}{q - q^{-1}} v_{j+1}\,,
\quad k_1v_j=q^{2j}v_j.
\end{align*}
Hence the action of $U_q(\bo)$ on $V_\infty^\sigma$ is given by
\begin{align*}
&e_1\, v_j = - q^{-j}\frac{[j+1]_q}{q - q^{-1}} v_{j+1}\,,
\quad
e_0\, v_j = q^{2(j-1)} v_{j-1}\,,
\quad
k_1 v_j = q^{2j} v_j\,.
\end{align*}
We have $(V_\infty^\sigma)^* = \oplus_{j=0}^\infty \CC v_j^*$, where $(v_j^*)_{j\geq 0}$ 
is a basis dual to $(v_j)_{j\geq 0}$. We have 
\begin{align*}
&e_1\, v_j^* = q^{-3j+3}\frac{[j]_q}{q - q^{-1}} v_{j-1}^*\,,
\quad
e_0\, v_j^* = -q^{4j+2} v_{j+1}^*\,,
\quad
k_1 v_j^* = q^{-2j} v_j^*\,.
\end{align*}
There is a unique basis $(w_j^*)_{j\geq 0}$ of $(V_\infty^\sigma)^*$ such that
$w_0^* = v_0^*$, $w_j^*\in \CC^* v_j^*$ and 
$e_1\, w_j^* = w_{j-1}^*$.
We obtain  
\begin{align*}
e_0\, w_j^* = -q^{j + 2}\frac{[j+1]_q}{q - q^{-1}} w_{j+1}^*\,,
\quad
k_1 w_j^* = q^{-2j} w_j^*\,.
\end{align*}

Let us write the example of 
$\widehat{\mathfrak{sl}}_3$ 
in details. We have $V_\infty^\sigma=\oplus_{0\le n'\le n}\C v_{n,n'}$
with the action of $U_q(\bo)$ given by
\begin{align*}
&e_1 v_{n,n'} =  \frac{q^{-n + n'} }{q^{-1} - q}\,, 
\quad 
e_2 v_{n,n'} =  q^{-2n' + n} [n'+1]_q [n-n']_q\, v_{n,n' + 1}\,,
\\
&e_0 v_{n,n'} = - q^{n+n'-2} v_{n-1,n'-1}\,,\\
&k_1 v_{n,n'} = q^{2n-n'} v_{n,n'}\,,\quad
k_2v_{n,n'} = q^{2n'-n}v_{n,n'}\,,\quad
k_0 v_{n,n'} = q^{-n-n'}v_{n,n'}\,.
\end{align*}
We get the action of $U_q(\bo)$ on 
$(V_\infty^\sigma)^* = \bigoplus_{n,n'\geq 0} \CC v_{n,n'}^*$
\begin{align*}
&e_1 v_{n,n'}^* =\frac{q^{-3n + 2n'+3} }{q - q^{-1}}\, v_{n - 1,n'}^*\,,
\quad
e_2 v_{n,n'}^* = - q^{2(n-2n'+2)} [n']_q [n-n'+1]_q\, v_{n,n' - 1}^*\,,
\\
&e_0 v_{n,n'}^* =  q^{2(n+n'+1)}v_{n+1,n'+1}^*\,,
\\
&k_1 v_{n,n'}^* = q^{-2n+n'} v_{n,n'}^*\,,
\quad
k_2v_{n,n'}^* = q^{-2n'+n}v_{n,n'}^*\,,
\quad
k_0 v_{n,n'}^* = q^{n+n'}v_{n,n'}^*\,.
\end{align*}
There is a unique basis 
$(w_{n,n'}^*)_{n,n'\geq 0}$ of $(V_\infty^\sigma)^*$ such that
$e_1w_{n,n'}^* = [n-n']_q w_{n-1,n'}^*$, $e_2w_{n,n'}^* = w_{n,n'-1}^*$,
$w_{0,0}^* = v_{0,0}^*$ and $w_{n,n'}^* =  \lambda_{n,n'} v_{n,n'}^*$ where $\lambda_{n,n'}\in\CC^*$. We get 
$$\lambda_{n,n'} = \lambda_{n-1,n'}(q^{n-n'} - q^{n'-n}) q^{3n-2n'-3}\text{ , }\lambda_{n,n'}[n']_q[n-n'+1]_q = -\lambda_{n,n'-1}q^{4n'-2n-4}.$$
This implies $e_0w_{n,n'}^* = -[n'+1]_q \frac{q^{n+4}}{q - q^{-1}}w_{n+1,n'+1}^*$.

\section{Irreducibility of asymptotic representations 
and character formulas}\label{irred}

\subsection{Irreducibility of $V_\infty$ and the character of $L_{i,1}^-$}

Let $i\in I$. We recall 
the $U_q(\mathfrak{b})$-module $V_\infty$ 
constructed in Section \ref{asymptmoins}. We have proved that
$L_{i,1}^-$ is a subquotient of $V_\infty$.

\begin{thm}\label{irrasymp} 
The module $V_\infty$ is irreducible and is isomorphic to $L_{i,1}^-$.
In particular 
\begin{align*}
\tilde{\chi}_q(L_{i,1}^-)
=\lim_{k\to\infty}\tilde{\chi}_q\bigl(L(M_k)\bigr)
\end{align*}
as a formal power series in 
$\ZZ[[A_{j,q^r}^{-1}]]_{j\in I ,r\in\ZZ}$. 
\end{thm}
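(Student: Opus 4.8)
The plan is to reduce the entire statement to one identity of $q$-characters and then invoke the injectivity of the $q$-character morphism. Corollary \ref{Lmin} already exhibits $L_{i,1}^-$ as a quotient of the highest $\ell$-weight submodule $V_\infty'=U_q(\mathfrak{b})\,v_\infty\subseteq V_\infty$, so $\dim (L_{i,1}^-)_\omega\le\dim (V_\infty')_\omega\le\dim (V_\infty)_\omega$ for every $\omega\in\tb^*$. Since by \cite{n,hcr} the normalized $q$-character of $L(M_k)$ converges, the weight-space dimensions of $V_k$ stabilize and $\tilde{\chi}_q(V_\infty)=\lim_{k\to\infty}\tilde{\chi}_q(L(M_k))$ is precisely the displayed power series. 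Hence it suffices to prove the reverse inequality $\dim (V_\infty)_\omega\le\dim (L_{i,1}^-)_\omega$: this forces $v_\infty$ to generate $V_\infty$, makes $V_\infty'$ simple, and identifies $V_\infty\cong L_{i,1}^-$, whereupon the character formula is immediate.

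For the reverse bound I would feed in the dual module of the preceding section. Running the asymptotic construction with $q^{-1}$ and twisting by $\sigma$ produces, via Corollary \ref{Lpl}, a module $(V_\infty^\sigma)^*\in\mathcal{O}$ with a highest $\ell$-weight vector $v_\infty^*$ of $\ell$-weight $(1,\dots,1-z,\dots,1)$, whose cyclic submodule has simple head $L_{i,1}^+$. Two structural facts drive the argument. First, the combinatorics of Kirillov--Reshetikhin $q$-characters is invariant under $q\mapsto q^{-1}$, so $(V_\infty^\sigma)^*$ and $V_\infty$ have identical weight-space dimensions. Second, the graded-duality functor of Section \ref{dualcat} is exact and contravariant and carries a simple highest $\ell$-weight module to a simple lowest $\ell$-weight one, $L(\Psib)^*\simeq L'(\Psib^{-1})$ by Proposition \ref{dualweight}. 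The heart of the proof is to identify the graded dual of $V_\infty$ with the $\sigma$-twist of the $q^{-1}$-limit module, so that the simple top $L_{i,1}^+$ of $(V_\infty^\sigma)^*$ corresponds under duality to the simple quotient $L_{i,1}^-$ of $V_\infty'$; multiplicity one of the extreme weight on the $+$ side then rules out any composition factor of $V_\infty'$ other than $L_{i,1}^-$ and simultaneously forces $V_\infty=V_\infty'$.

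I expect this identification to be the main obstacle: although every finite module $L(M_k)$ is simple, simplicity need not survive the inductive limit $V_\infty=\varinjlim V_k$, and the limiting operators $\tilde{x}^-_{j,r}$ are available only through the maps $F_{k,l}$ of Theorem \ref{H3}. Concretely, the delicate point is to show that no spurious highest $\ell$-weight vector is created in the limit---equivalently, that the canonical non-degenerate pairing between $V_\infty^\sigma$ and its graded dual $(V_\infty^\sigma)^*$ restricts to a perfect pairing between the cyclic submodules generated by $v_\infty$ and by $v_\infty^*$, and that this dual is compatible with the $q$-version $V_\infty$. Once the two dimension counts coincide, injectivity of $\chi_q$ yields $V_\infty\cong L_{i,1}^-$ and the stated limit formula for $\tilde{\chi}_q(L_{i,1}^-)$; by Remarks \ref{remcvg} and \ref{remcvg2} the weight-space dimensions do not depend on $q$ and the limiting operators persist for all non-root-of-unity $q$, so the hypothesis $|q|>1$ can finally be removed.
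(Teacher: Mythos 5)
Your setup is fine (the easy inequality $\dim(L_{i,1}^-)_\omega\le\dim(V_\infty)_\omega$, the identification of $\lim_k\tilde{\chi}_q(L(M_k))$ with $\tilde{\chi}_q(V_\infty)$, and the observation that everything reduces to the reverse bound), but the mechanism you propose for the reverse bound does not work. First, the ``heart'' identification is false: the graded dual of $V_\infty$ cannot be $V_\infty^\sigma$, because $v_\infty\in V_\infty$ has $\ell$-weight $(1-z)^{-1}$ in the $i$-th slot, so its dual vector has $\ell$-weight $1-z$, whereas the lowest $\ell$-weight vector of $V_\infty^\sigma$ has $\ell$-weight $(1-z)^{-1}$; the two sides match only at the level of \emph{ordinary} characters, never of $q$-characters (indeed $\tilde{\chi}_q(L_{i,1}^+)$ is not even a power series in the $A_{j,b}^{-1}$, as Section \ref{asymptpplus} shows, while $\tilde{\chi}_q(V_\infty)$ is). Second, even granting $\chi((V_\infty^\sigma)^*)=\chi(V_\infty)$, ``multiplicity one of the extreme weight'' does not rule out extra composition factors: a Verma module has a one-dimensional top weight space and is highly reducible, and equality of ordinary characters between the two limit modules is compatible with both carrying the same spurious factors. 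Third, your argument tacitly needs the irreducibility of $(V_\infty^\sigma)^*$ (i.e.\ that its $\ell$-weight multiplicities equal those of $L_{i,1}^+$), which is Theorem \ref{plusexpl} --- proved in the paper \emph{after} and \emph{by means of} Theorem \ref{irrasymp}, so this route is circular.

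The missing idea is a factorization of $\ell$-weights, not a duality. Shift the grading so that $\tilde{L}(M_k)$ has highest $\ell$-weight $(1-q_i^{-2k}z)(1-z)^{-1}$ in slot $i$; then $\tilde{L}(M_k)$ is a subquotient of $L^+_{i,q_i^{-2k}}\otimes L^-_{i,1}$ (this is where Corollary \ref{Lpl} genuinely enters: both factors lie in $\mathcal{O}$, so $\chi_q$-multiplicativity applies). Hence every $\ell$-weight $\Psib$ of $\tilde{L}(M_k)$ factors as $\Psib(z)=\Psib_k^+(q_i^{-2k}z)\,\Psib_k^-(z)$ with $\Psib_k^\pm$ occurring in $\chi_q(L^\pm_{i,1})$. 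For fixed $\Psib$ the weight constraint leaves only finitely many possible $\Psib_k^-$, so since the factorization holds for all $k$, $\Psib_k^+$ must be independent of $z$ for $k$ large; as $\Psib$ and $\Psib_k^-$ are monomials in the $A_{j,b}^{-1}$, this forces $\Psib_k^+=1$ and $\Psib=\Psib_k^-$. Since $[1]$ has multiplicity one in $\tilde{\chi}_q(L^+_{i,q_i^{-2k}})$, the multiplicity of $\Psib$ in $\tilde{L}(M_k)$ is bounded by its multiplicity in $L^-_{i,1}$, and letting $k\to\infty$ yields $n^{(\infty)}_{\Psibs}\le n^-_{\Psibs}$, completing the proof. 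Your proposal contains no substitute for this step, and the point you yourself flag as ``the main obstacle'' is exactly the statement being proved, restated in dual language rather than resolved.
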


\begin{proof} 
Consider the KR module $L(M_k)$ with 
$M_k = Y_{i,q_i^{-1}}Y_{i,q_i^{-3}}\cdots Y_{i,q_i^{-2k+1}}$, 
viewed as a $U_q(\mathfrak{b})$-module.
In view of Remark \ref{grading-shift}, 
we modify its $Q$-grading so that $k_jv=v$ ($j\in I$)
holds on the highest weight vector $v$. 
Denote the resulting $U_q(\mathfrak{b})$-module by $\tilde{L}(M_k)$.  

We set
\begin{align*}
&
\tilde{\chi}_q(\tilde{L}(M_k)) = \sum_{\Psibs\in\mfr}
 n_{\Psibs}^{(k)} [\Psib]\,,
\quad
\tilde{\chi}_q(V_\infty) = \sum_{\Psibs\in\mfr} 
n_{\Psibs}^{(\infty)} [\Psib]\,,
\quad
\tilde{\chi}_q(L_{i,1}^-) = \sum_{\Psibs\in\mfr} 
n_{\Psibs}^- [\Psib]\,.
\end{align*}
Both $\tilde{\chi}_q(\tilde{L}(M_k))$, $\tilde{\chi}_q(V_\infty)$
belong to $\ZZ[[A_{j,b}^{-1}]]_{j\in I, b\in\CC^*}$, 
and $\tilde{\chi}_q(\tilde{L}(M_k))$ 
converges to $\tilde{\chi}_q(V_\infty)$ as $k\to \infty$.
Since $L_{i,1}^-$ is a subquotient of $V_\infty$, 
we have 
\begin{align*}
n_{\Psibs}^{-}\le n_{\Psibs}^{(\infty)}\,
\quad (\Psib\in\mfr).
\end{align*}

To prove the theorem, it suffices to show the reverse inequality.
Fix $\Psib\in\mfr$.   
We may assume $n^{(\infty)}_{\Psibs}\neq 0$.
Comparing the highest $\ell$-weights we see 
that $\tilde{L}(M_k)$ is a subquotient of 
$L^+_{i,q_i^{-2k}}\otimes L^-_{i,1}$. Hence we must have 
\begin{align}
\Psib(z)=\Psib^+_k(q_i^{-2k}z)\Psib^-_k(z)\,
\label{Psibpm}
\end{align}
for some $\Psib^\pm_k(z)$ which occur in $\chi_q(L^\pm_{i,1})$. 
We show that we must necessarily have
$\Psib^+_k(z)=1$ for sufficiently large $k$. 
Indeed, if we write
$\varpi\bigl(\Psib(z)\bigr)=
(\overline{\alpha}_{i_1}\cdots \overline{\alpha}_{i_l})^{-1}$, 
$\varpi\bigl(\Psib^-_k(z)\bigr)=
(\overline{\alpha}_{j_1}\cdots \overline{\alpha}_{j_m})^{-1}$, 
then from 
\begin{align*}
\varpi\bigl(\Psib(z)\bigr)=
\varpi\bigl(\Psib^+_k(q_i^{-2k}z)\bigr)\varpi\bigl(\Psib^-_k(z)\bigr)
\end{align*} 
we have $m\le l$. 
Given $\Psib$, there are only finitely many
possibilities for such $\Psib^-_k(z)$'s. 
Since \eqref{Psibpm} holds for any $k$, $\Psib^+_k(z)$ must be 
independent of $z$ for $k$ large enough.  
It follows that
$(\varpi(\Psib))^{-1} \Psib=(\varpi(\Psib_k^-))^{-1}\Psib_k^-$.
On the other hand, $\Psib,\Psib^-_k$ are both monomials in 
$A_{j,b}^{-1}$'s. Therefore we must have 
$\Psib=\Psib_k^-$ and $\Psib^+_k=1$. 

The multiplicity of the term $[1]$ in 
$\tilde{\chi}_q(L^+_{i,q_i^{-2k}})$ 
is $1$. We conclude that $n^{(k)}_{\Psibs}\le n^-_{\Psibs}$, 
which shows the opposite inequality
\begin{align*}
n_{\Psibs}^{-}\ge n_{\Psibs}^{(\infty)}\,
\quad (\Psib\in\mfr).
\end{align*}
\end{proof}

\subsection{Irreducibility of $(V_\infty^\sigma)^*$ 
and the character of $L_{i,1}^+$}

\begin{prop}\label{classprim} For $\Psib\in\tb^*_\ell$, 
the simple module $L'(\Psib)$ is in category 
$\mathcal{O}^*$ if and only if $\Psi_i(z)$ 
is rational for any $i\in I$.
\end{prop}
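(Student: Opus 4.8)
The plan is to deduce this proposition from the classification already obtained in category $\mathcal{O}$ (Theorem \ref{class}) by transporting it through the graded-duality functor $V\mapsto V^*$ of Section \ref{dualcat}. The two structural ingredients I would use are Lemma \ref{autredef}, which interchanges $\mathcal{O}$ and $\mathcal{O}^*$ under graded duality, and Proposition \ref{dualweight}, which turns a lowest $\ell$-weight module into a highest $\ell$-weight module while inverting the $\ell$-weight. I would also invoke the elementary observation that, since each $\Psi_i(z)$ is an invertible formal power series with $\Psi_{i,0}\neq 0$, the series $\Psi_i(z)$ is rational if and only if $\Psi_i(z)^{-1}$ is; thus the components of $\Psib$ are rational exactly when those of $\Psib^{-1}$ are. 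With these in hand the two implications become symmetric reflections of Theorem \ref{class}.

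For the ``only if'' direction, suppose $L'(\Psib)$ lies in $\mathcal{O}^*$. First I would note that this forces all weight spaces of $L'(\Psib)$ to be finite-dimensional: the weight space of $(L'(\Psib))^*$ at $\beta^{-1}$ is dual to $L'(\Psib)_\beta$, so an infinite-dimensional $L'(\Psib)_\beta$ would contradict membership of $(L'(\Psib))^*$ in $\mathcal{O}$. Proposition \ref{dualweight} then applies and yields $(L'(\Psib))^*\simeq L(\Psib^{-1})$, which lies in $\mathcal{O}$ by definition of $\mathcal{O}^*$; Theorem \ref{class} shows that each $\Psi_i(z)^{-1}$ is rational, hence so is each $\Psi_i(z)$. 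Alternatively, one may simply apply the analog of Proposition \ref{rat} for $\mathcal{O}^*$ (noted in Section \ref{dualcat}) to the nonzero $\ell$-weight space spanned by the lowest $\ell$-weight vector of $L'(\Psib)$.

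For the ``if'' direction, assume every $\Psi_i(z)$ is rational, so every component of $\Psib^{-1}$ is rational as well. By Theorem \ref{class} the highest $\ell$-weight module $M:=L(\Psib^{-1})$ belongs to $\mathcal{O}$, hence is $\tb$-diagonalizable with finite-dimensional weight spaces; its graded dual $M^*$ is therefore defined and lies in $\mathcal{O}^*$ by Lemma \ref{autredef}. The remaining task is to identify $M^*$ with $L'(\Psib)$, and here I would run the mirror image of the proof of Proposition \ref{dualweight}. Let $v$ be the highest $\ell$-weight vector of $M$ and $v^*$ the dual functional. Since $v$ has maximal weight, $v^*$ has minimal weight in $M^*$, so it is annihilated by $U_q(\bo)^-$ (which strictly lowers the weight) and spans the one-dimensional minimal weight space $M^*_{\varpi(\Psibs)}$; in particular it is a joint eigenvector of the $\phi_{i,m}^+$. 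Pairing through the $U_q(\bo)$-module map $\mathcal{D}:M\otimes M^*\to\CC$ exactly as in Proposition \ref{dualweight} gives that the $\ell$-weight of $v^*$ is $(\Psib^{-1})^{-1}=\Psib$. Because $M$ is simple, $M^*$ is simple, so $M^*=U_q(\bo)v^*$ is a simple lowest $\ell$-weight module of lowest $\ell$-weight $\Psib$, whence $M^*\simeq L'(\Psib)$ by uniqueness. Thus $L'(\Psib)\in\mathcal{O}^*$.

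The only genuine work lies in this last identification, that is, in the dual form of Proposition \ref{dualweight} sending a highest $\ell$-weight module to a lowest $\ell$-weight one. This is not the literal statement proved earlier, but its proof is completely symmetric: the central computation $\Psi_j^*(z)=\Psi_j(z)^{-1}$ via $\mathcal{D}$ goes through verbatim, the use of $S^{-1}$ rather than $S$ being precisely what makes $\mathcal{D}$ a module morphism, as explained in Remark \ref{smoins}. I expect no obstacle beyond verifying that $v^*$ generates $M^*$ (immediate from simplicity) and that the minimal weight space of $M^*$ is one-dimensional (dual to the one-dimensional highest weight space of $M$).
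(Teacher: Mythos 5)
Your overall strategy---transporting Theorem \ref{class} through the graded duality of Section \ref{dualcat}---is the same as the paper's, and your ``only if'' direction is sound (the paper handles it implicitly via the $\mathcal{O}^*$-analog of Proposition \ref{rat}, which you also cite as an alternative). But your ``if'' direction has a genuine gap at exactly the step you dismiss as going through ``verbatim''. The proof of Proposition \ref{dualweight} computes the $\ell$-weight of $v^*$ by applying $\Delta(\phi^+_{j,r})$ to $v\otimes v^*$ and pairing with $\mathcal{D}:V\otimes V^*\to\CC$; the error terms in (\ref{h}) lie in $\U_q^-(\Glie)\otimes \U_q^+(\Glie)$, and they annihilate $v\otimes v^*$ there precisely because $v$, sitting in the \emph{first} slot, has \emph{minimal} weight, so $\U_q^-(\Glie)\,v=0$ (and likewise $v^*$ has maximal weight, so $\U_q^+(\Glie)\,v^*=0$). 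In your mirrored situation the configuration is reversed: $v$ has maximal weight in $M=L(\Psib^{-1})$ and $v^*$ has minimal weight in $M^*$, so neither factor of the error terms vanishes. Nor can the error terms be discarded after pairing: since $\Delta$ preserves the $Q$-grading, each error term $a\otimes b$ satisfies $\deg a=-\deg b$, so $a\,v$ and $b\,v^*$ land in exactly dual weight spaces and $\mathcal{D}(a\,v\otimes b\,v^*)$ can be nonzero. This is precisely the failure mode recorded in Remark \ref{smoins}---there for the $S$-convention, but it reappears in your argument because you have swapped which extremal vector occupies which tensor slot. So the central identity $\Psi_j^*(z)=\Psi_j(z)^{-1}$, and with it the identification $M^*\simeq L'(\Psib)$, is unproven as you state it; the claim of complete symmetry between the two dualizations is exactly what the asymmetry between $S$ and $S^{-1}$ forbids.

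The paper sidesteps this entirely: as in Theorem \ref{class}, it first reduces to the fundamental representations $L_{i,1}'^{\pm}$ (the lowest-$\ell$-weight analog of Remark \ref{mult}, plus the twist $\tau_a$), and then applies Proposition \ref{dualweight} \emph{in its stated form}, with the lowest $\ell$-weight module in the first slot: $(L_{i,1}'^{\pm})^*\simeq L_{i,1}^{\mp}$, which lies in category $\mathcal{O}$ by the already-proved Theorem \ref{class}, whence $L_{i,1}'^{\pm}\in\mathcal{O}^*$. If you want to keep your global, reduction-free argument, it can be repaired: your weight considerations do show that $M^*$ is simple and generated by a lowest $\ell$-weight vector, i.e.\ $M^*\simeq L'(\Psib')$ for \emph{some} $\Psib'$, and this already puts a simple lowest $\ell$-weight module in $\mathcal{O}^*$. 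To see $\Psib'=\Psib$, apply Proposition \ref{dualweight} legitimately to $M^*$ (it has finite-dimensional weight spaces) to get $(M^*)^*\simeq L(\Psib'^{-1})$, and then compare $(M^*)^*$ with $M$. But note the double dual is the twist of $M$ by $S^{-2}$, so this last step still requires the observation that $S^{2}$ fixes $\phi_{i,m}^+$ (it rescales an element of weight $\beta$ by a scalar depending only on $\beta$, hence acts trivially in $Q$-degree zero), so that the twist does not change the highest $\ell$-weight. That verification is short but not a formality, and it is the honest price of avoiding the paper's reduction to fundamental representations.
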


\begin{proof} As for Theorem \ref{class}, it suffices to prove that for $i\in I$, the fundamental
representations $L_{i,1}'^\pm$ are in category $\mathcal{O}^*$. Theorem \ref{class} is already proved,
so the representations $L_{i,1}^\pm$ of $U_{q^{-1}}(\bo)$ are 
in category $\mathcal{O}$. 
Since $(L_{i,1}'^\pm)^*\simeq L_{i,1}^\mp$
by Proposition \ref{dualweight}, we get the result.
\end{proof}

We define a partial ordering $\preceq$ on characters : $\chi\preceq \chi'$ if the multiplicities of weights
are lower in $\chi$ than in $\chi'$.

\begin{thm}\label{plusexpl} $(V_\infty^\sigma)^*$ is irreducible isomorphic to $L_{i,1}^+$ and we have $\chi(L_{i,1}^+) = \chi(L_{i,1}^-)$.
\end{thm}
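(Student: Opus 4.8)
The plan is to prove the two assertions in parallel, using the already-established irreducibility of $V_\infty$ (Theorem \ref{irrasymp}) together with the duality machinery of Section \ref{dualcat}. The key structural fact I would exploit is that $(V_\infty^\sigma)^*$ is built by the same asymptotic construction as $V_\infty$, but with quantum parameter $q^{-1}$ and then passed through the isomorphism $\sigma$ and the graded dual. Since Theorem \ref{irrasymp} holds for \emph{any} non-root-of-unity $q$, it applies equally well with $q$ replaced by $q^{-1}$: the module $V_\infty$ constructed over $\widetilde{U}_{q^{-1}}(\mathfrak{g})$ is irreducible.

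First I would establish irreducibility of $(V_\infty^\sigma)^*$. Running Theorem \ref{irrasymp} with $q^{-1}$ in place of $q$, the underlying $\widetilde{U}_{q^{-1}}(\mathfrak{g})$-module $V_\infty$ is irreducible; via Proposition \ref{asym} and \eqref{vsigma2} this gives the irreducible $U_q(\mathfrak{b})$-module $V_\infty^\sigma$ of lowest $\ell$-weight $\Psib(z)=(1,\cdots,1-z,\cdots,1)$. Irreducibility is preserved under the graded dual, as noted at the end of the proof of Proposition \ref{dualweight}, so $(V_\infty^\sigma)^*$ is irreducible. Then I would identify its highest $\ell$-weight: the distinguished vector $v_\infty^*$ has been shown (just before Corollary \ref{Lpl}) to satisfy $e_j\, v_\infty^*=0$ and to carry $\ell$-weight $(1,\cdots,1-z,\cdots,1)$, which is exactly the defining $\ell$-weight of $L_{i,1}^+$. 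An irreducible highest $\ell$-weight module is determined up to isomorphism by its highest $\ell$-weight (Proposition \ref{simple}), so $(V_\infty^\sigma)^*\simeq L_{i,1}^+$.

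For the character equality I would chain together the identities already available. By Proposition \ref{dualweight}, for a $\tb$-diagonalizable module with finite-dimensional weight spaces one has $\chi(V^*)=\chi^{-1}(V)$. Applying this to $V_\infty^\sigma$ gives $\chi(L_{i,1}^+)=\chi((V_\infty^\sigma)^*)=\chi^{-1}(V_\infty^\sigma)$. Now $V_\infty^\sigma$ and $V_\infty$ share the same underlying graded vector space, but the $U_q(\mathfrak{b})$-action on $V_\infty^\sigma$ via \eqref{vsigma2} inverts the $k_i$-eigenvalues relative to \eqref{vsigma1}: a vector of $Q$-degree $\alpha$ acquires weight $q_i^{-\alpha(\alpha_i^\vee)}$ rather than $q_i^{\alpha(\alpha_i^\vee)}$. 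Hence $\chi(V_\infty^\sigma)=\chi^{-1}(V_\infty)$, and applying $\chi\mapsto\chi^{-1}$ once more yields $\chi^{-1}(V_\infty^\sigma)=\chi(V_\infty)$. Combining, $\chi(L_{i,1}^+)=\chi(V_\infty)=\chi(L_{i,1}^-)$, the last equality being Theorem \ref{irrasymp}.

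I expect the main obstacle to lie not in any single hard computation but in bookkeeping the two weight inversions correctly: one inversion comes from the twist $\sigma$ and the grading prescription \eqref{vsigma2}, and a second comes from the graded dual in Proposition \ref{dualweight}, and it is easy to mismatch them and obtain $\chi^{-1}$ instead of $\chi$. I would therefore track the weight of a homogeneous vector of degree $\alpha$ explicitly through each step to confirm the two inversions compose to the identity on characters. A secondary point requiring care is the justification that Theorem \ref{irrasymp} may be invoked at parameter $q^{-1}$; this is legitimate because the asymptotic construction and its irreducibility proof were carried out with no restriction beyond $q$ not being a root of unity (see Remarks \ref{remcvg} and \ref{remcvg2}), a condition symmetric under $q\leftrightarrow q^{-1}$.
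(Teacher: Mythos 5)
Your character bookkeeping is sound: the two weight inversions (one from the twisted grading in \eqref{vsigma2}, one from the graded dual via Proposition \ref{dualweight}) do compose to give $\chi((V_\infty^\sigma)^*)=\chi(V_\infty)=\chi(L_{i,1}^-)$, and this is exactly how the paper obtains the inequality $\chi(L_{i,1}^+)\preceq\chi(L_{i,1}^-)$. The genuine gap is in your first step: you cannot deduce irreducibility of $V_\infty^\sigma$ as a $U_q(\mathfrak{b})$-module from Theorem \ref{irrasymp} applied at parameter $q^{-1}$. That theorem gives irreducibility of $V_\infty$ under the $U_{q^{-1}}(\mathfrak{b})$-action of \eqref{vsigma1}, i.e.\ under the operators $\tilde{x}^+_{j,0}$ and $y$, whereas the $U_q(\mathfrak{b})$-structure on $V_\infty^\sigma$ from \eqref{vsigma2} acts through the quite different operators $\sigma(\tilde{x}^+_{j,0})=q_j^2\tilde{x}^-_{j,0}$ and $\sigma(y)$. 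These generate distinct subalgebras of $\widetilde{U}_{q^{-1}}(\mathfrak{g})$, neither containing the other; the paper flags precisely this obstruction in Section \ref{subsec:Utg} (``As $\sigma(U_{q^{-1}}(\bo))\neq U_q(\bo)$\dots''). A subspace invariant under one set of operators need not be invariant under the other, so irreducibility does not transfer through $\sigma$, and your claim that $V_\infty^\sigma$ is irreducible --- hence the isomorphism $(V_\infty^\sigma)^*\simeq L_{i,1}^+$ --- is unproven at that point. (Already for $\widehat{\mathfrak{sl}}_2$ the two Borel-type actions use $\{\tilde{x}^+_{1,0},\tilde{x}^-_{1,1}\}$ versus $\{\tilde{x}^-_{1,0},\tilde{x}^+_{1,1}\}$; both happen to act irreducibly there, but that is a computation, not a formal consequence.)

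What is missing is the reverse character inequality $\chi(L_{i,1}^+)\succeq\chi(L_{i,1}^-)$, which is the actual content of the paper's proof and from which irreducibility is then \emph{deduced} --- your proposal has the logical flow backwards. The paper obtains the reverse inequality by an independent limit argument on the lowest-$\ell$-weight side: it takes the KR modules $L'(P_k^{-1})$ with $P_k=Y_{i,q_i}Y_{i,q_i^3}\cdots Y_{i,q_i^{2k+1}}$ (irreducible over $U_q(\bo)$ by Proposition \ref{fd}), shifts the grading so the lowest weight vector has weight $1$, observes that $\tilde{L}'(P_k^{-1})$ is a subquotient of $L'^-_{i,1}\otimes L'^+_{i,q_i^{2(k+1)}}$, and that after twisting by $\sigma$ the normalized $q$-characters $P_k\,\chi_q(\tilde{L}'(P_k^{-1}))$ converge in $\Z[[A_{j,b}]]_{j\in I,b\in\C^*}$ to $\Psib^{-1}\chi_q(V_\infty^\sigma)$; the multiplicity-separation argument of Theorem \ref{irrasymp} then yields $\chi(L'^-_{i,1})\succeq\chi^{-1}(L^-_{i,1})$, hence $\chi(L_{i,1}^+)=\chi^{-1}(L'^-_{i,1})\succeq\chi(L^-_{i,1})$. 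Once both inequalities are in hand, equality of characters forces the subquotient $L_{i,1}^+$ to exhaust $(V_\infty^\sigma)^*$, which is how the irreducibility in the statement is actually established. To repair your proof you would need to supply this second inequality (or a substitute for it); invoking Theorem \ref{irrasymp} at $q^{-1}$, legitimate as the parameter switch itself is, does not do that job.
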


\begin{proof} 
We have proved that $L_{i,1}^+$ is a subquotient of $(V_\infty^\sigma)^*$ and
$\chi((V_\infty^\sigma)^*) = \chi(V_\infty) = \chi(L_{i,1}^-)$.
So $\chi(L_{i,1}^+)\preceq \chi(L_{i,1}^-)$.

To prove the reverse inequality, let $k\geq 1$ and consider the KR module $L'(P_k^{-1})$ with 
$P_k = Y_{i,q_i}Y_{i,q_i^3}\cdots Y_{i,q_i^{2k+1}}$ (this is a KR module by (\ref{remkr})).
This is an irreducible $U_q(\bo)$-module by Proposition \ref{fd}. 
We modify its $Q$-grading so that $k_jv_k = v_k$
($j\in I$) holds on the 
lowest weight vector $v_k$. 
In the resulting module $\tilde{L}'(P_k^{-1})$, we have
$$
\phi_i^+(z)\,v_k = \frac{1 - z q_i^{2(k+1)}}{1 - z}\,v_k\,,
\quad
\phi_j^+(z)\,v_k = v_k\quad \text{ for }j\neq i\,.
$$ 
So $\tilde{L}'(P_k^{-1})$ is a subquotient of $L'^-_{i,1}\otimes L'^+_{i,q_i^{2(k+1)}}$.

By twisting by $\sigma$, the $q$-character of $L'(P_k^{-1})$ is equal
to the $q$-character of the $U_{q^{-1}}(\mathfrak{g})$-KR module 
$L(Y_{i,(q_i^{-1})^{-2k-1}}Y_{i,(q_i^{-1})^{-2k+1}}\cdots Y_{i,(q_i^{-1})^{-1}})$.
Hence, both $P_k \chi_q(\tilde{L}'(P_k^{-1}))$ and $\Psib^{-1}\chi_q(V_\infty^\sigma)$ belong to 
$\ZZ[[A_{j,b}]]_{j\in I,b\in\CC^*}$ ($\Psib$ is the $\ell$-weight of $V_\infty^\sigma$ of minimal weight), 
and 
$$\Psib^{-1}\chi_q(V_\infty^\sigma) = \text{lim}_{k\rightarrow \infty} P_k \chi_q(\tilde{L}'(P_k^{-1}))$$ 
as a formal power series in $\ZZ[[A_{j,b}]]_{j\in I,b\in\CC^*}$.

Now we can conclude as for Theorem \ref{irrasymp} that $\chi(L'^-_{i,1}) \succeq \chi^{-1}(L_{i,1}^-)$.

So we have $\chi(L_{i,1}^+) = \chi^{-1}(L_{i,1}'^-) \succeq \chi(L_{i,1}^-)$. This implies the result.
\end{proof}

\subsection{Explicit character formulas for fundamental representations.}

While the convergence of the normalized
$q$-characters has been proven \cite{n, hcr},  
no explicit formula for the limit is known in general. 
For the ordinary characters $\chi\bigl(L(M_k)\bigr)$,  
explicit formulas are known \cite{n, hcr}, 
from which one can extract the following formula for the limit.

\begin{thm} For any $i\in I$, $a,b \in\CC^*$, we have
$$
\chi(L_{i,a}^+) = \chi(L_{i,b}^-) = 
\frac{\underset{N=(N_k^{(j)})}{\sum}
\underset{j\in I, k>0}{\prod}
\begin{pmatrix}
N_k^{(j)} + \delta_{i,j} k - 
\underset{h\in I,l>0}{\sum}N_l^{(h)}r_jC_{j,h}
\text{min}(\frac{k}{r_h},\frac{l}{r_j})
\\N_k^{(j)}\end{pmatrix}[\overline{\alpha_j}]^{-k
N_k^{(i)}}}{\underset{\alpha\in\Delta_+}{\prod}(1-[\overline{\alpha}]^{-1})}
$$ 
where $\Delta_+$ is the set of positive roots of 
$\gb$,
$\binom{a}{b}
=\Gamma(a+1)/\bigl(\Gamma(a-b+1)\Gamma(b+1)\bigr)$, and the sum is taken over all non-negative integers
$N^{(j)}_k$ with $j\in I$, $k\in \Z_{>0}$.
\end{thm}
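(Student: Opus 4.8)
The plan is to reduce the whole statement to a single computation, that of $\chi(L_{i,1}^-)$, and then to read off the answer from the known explicit character formula for Kirillov--Reshetikhin modules. First I would record the elementary reductions. The twist automorphism $\tau_a$ of \eqref{tau-a} fixes every $k_i$, hence preserves the $\tb$-weight spaces; since the pullback by $\tau_a$ of $L_{i,b}^\pm$ is $L_{i,ab}^\pm$, the ordinary character $\chi(L_{i,a}^\pm)$ does not depend on $a$. Together with the equality $\chi(L_{i,1}^+)=\chi(L_{i,1}^-)$ from Theorem \ref{plusexpl}, this shows that all the characters appearing in the statement coincide, so it is enough to evaluate $\chi(L_{i,1}^-)$.

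For this I would pass to the asymptotic construction. By Theorem \ref{irrasymp} we have $\tilde\chi_q(L_{i,1}^-)=\lim_{k\to\infty}\tilde\chi_q\bigl(L(M_k)\bigr)$ in $\ZZ[[A_{j,q^r}^{-1}]]_{j\in I,r\in\ZZ}$. Applying the ring morphism $\varpi$, which sends $A_{j,q^r}^{-1}$ to $[\overline{\alpha_j}]^{-1}$ and is continuous for the coefficientwise topology, yields $\chi(L_{i,1}^-)=\lim_{k\to\infty}\chi\bigl(\tilde L(M_k)\bigr)$, the limit of the normalized ordinary characters of the regraded Kirillov--Reshetikhin modules. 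The convergence is genuine because each weight multiplicity of $L(M_k)$ stabilizes as $k\to\infty$, as recalled before Proposition \ref{cvg}. Thus the problem becomes the asymptotics of the explicit formula for $\chi\bigl(L(M_k)\bigr)$ of \cite{n,hcr}.

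Next I would substitute the fermionic (Kirillov--Reshetikhin) form of that formula. There the decomposition of $L(M_k)$ into $\gb$-irreducibles is written as a sum over configurations $(N_l^{(j)})_{j\in I,l>0}$ of products of binomials $\binom{P_l^{(j)}+N_l^{(j)}}{N_l^{(j)}}$, whose vacancy numbers $P_l^{(j)}$ carry a source term $\delta_{i,j}\min(l,k)$ and a quadratic interaction $-\sum_{h\in I,m>0}N_m^{(h)}r_jC_{j,h}\min(l/r_h,m/r_j)$, the weight of a configuration being the top weight lowered by $\sum_{j,l}l\,N_l^{(j)}\overline{\alpha_j}$. Expanding each irreducible character through the Weyl character formula factors out the $k$-independent denominator $\prod_{\alpha\in\Delta_+}(1-[\overline{\alpha}]^{-1})^{-1}$ appearing in the statement. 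In the limit the source term becomes $\delta_{i,j}l$: for a fixed target weight only finitely many, uniformly bounded, string lengths $l$ can occur, so $\min(l,k)=l$ once $k$ is large, and the vacancy numbers converge to those in the theorem (the two string-length indices, written $k$ and $l$ in the statement, are my $l$ and $m$).

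The main obstacle is the combinatorial reorganization hidden in this last step. The raw limit produces a \emph{constrained} sum over admissible configurations, with genuine nonnegative binomials, weighted by the alternating Weyl sums coming from the Weyl character formula, whereas the statement asserts an \emph{unconstrained} sum over all $(N_l^{(j)})$ with the analytically continued binomials $\Gamma(a+1)/\bigl(\Gamma(a-b+1)\Gamma(b+1)\bigr)$. I would prove their equality by the standard fermionic mechanism underlying \cite{n,hcr}: the alternating sum over the Weyl group absorbs precisely the configurations with negative vacancy numbers, the requisite signs being supplied by the values of the continued binomials at negative integer arguments, so that the level restriction and the Weyl reflections combine into a single signed sum over all configurations divided by $\prod_{\alpha\in\Delta_+}(1-[\overline{\alpha}]^{-1})$. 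Establishing this identity is the technical heart; by contrast the interchange of limit and summation is harmless, since each target weight receives contributions from only finitely many configurations whose multiplicities stabilize. The cases $\widehat{\mathfrak{sl}}_2$ and $\widehat{\mathfrak{sl}}_3$ of Section \ref{premex} serve as a check, where the signed numerator collapses to $1$ and to $1-[\overline{\alpha_2}]^{-1}$ respectively, reproducing the characters $1/(1-[\overline{\alpha_1}]^{-1})$ and $1/\bigl((1-[\overline{\alpha_1}]^{-1})(1-[\overline{\alpha_1}]^{-1}[\overline{\alpha_2}]^{-1})\bigr)$ that one reads off from the explicit bases given there.
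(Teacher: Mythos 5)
Your proposal is correct and takes essentially the same route as the paper: the paper likewise reduces to $\chi(L_{i,1}^-)$ (via the twist $\tau_a$ and Theorem \ref{plusexpl}), identifies it through Theorem \ref{irrasymp} as the limit of the normalized KR characters, and simply \emph{extracts} the stated expression from the explicit character formulas of \cite{n, hcr} — including the constrained-to-unconstrained reorganization with $\Gamma$-continued binomials that you flag as the technical heart, which the paper delegates wholesale to those references rather than reproving. The only cosmetic point is that the factor $[\overline{\alpha_j}]^{-kN_k^{(i)}}$ in the statement should read $[\overline{\alpha_j}]^{-kN_k^{(j)}}$, as your substitution (and your $\widehat{\mathfrak{sl}}_2$, $\widehat{\mathfrak{sl}}_3$ checks) tacitly assumes.
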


We have proved an explicit character formula for all fundamental representations in category $\mathcal{O}$.

\section{Asymptotic representations and $L_{i,1}^+$}\label{asymptpplus}

In this section we study another limit of the KR modules and discuss its relation 
to the module $L^+_{i,1}$. We assume $|q| < 1$ throughout.

\subsection{First examples}

We begin with the simplest example of $U_q(\widehat{\mathfrak{sl}}_2)$. 
Consider the KR module 
$W_k=L(N_k)$
with highest monomial 
$N_k=Y_{1,q} Y_{1,q^3}\cdots Y_{1,q^{2k-1}}$. 
It has a basis $(v_0,\cdots,v_k)$ with the action 
\begin{align*}
&x_{1,r}^+v_j=q^{2r(k-j+1)} v_{j-1}\,,
\quad 
x_{1,r}^-v_j 
= q^{2r(k-j)}[k-j]_q[j+1]_qv_{j+1}\,,
\\
&\phi_1^\pm(z)v_j= q^{k-2j}
\frac{(1-z)(1-zq^{2(k+1)})}{(1-zq^{2(k-j+1)})(1-zq^{2(k-j)})}v_j\,.
\end{align*}
Unlike the case of $L^-_{1,1}$ discussed in the previous section, 
only a `half' of these operators converge as $k\to\infty$.
\begin{align*}
&\lim_{k\to\infty}\tilde{x}_{1,r}^+v_j=\delta_{r,0} v_{j-1}\quad
(r\ge 0)\,,
\\
&\lim_{k\to\infty}\tilde{x}_{1,p}^-v_j=\frac{-q^{j+2}\delta_{p,1}}{q-q^{-1}}[j+1]_qv_{j+1}\quad
(p\ge 1)\,,
\\
&\lim_{k\to\infty}\tilde{\phi}_1^+(z)v_j=(1-z)v_j\,.
\end{align*}
By setting $k_1v_j =  q^{-2j}v_j$, we get an action of $U_q(\mathfrak{b})$ on 
$W_\infty=\oplus_{j\geq 0} \CC v_j$ :
\begin{align*}
&x_{1,r}^+v_j = \delta_{r,0} v_{j-1}\,,\quad 
x_{1,p}^-v_j = \frac{- q^{-j}\delta_{p,1}}{q-q^{-1}}[j+1]_q v_{j+1}\,,
\\
&\phi_1^+(z)v_j = q^{-2j}(1-z)v_j.
\end{align*}
This representation is simple and isomorphic to 
$L_{1,1}^+$. We recover
the action of the example of the last section
\begin{align*}
e_1\,v_j = v_{j-1}\,, \quad e_0\, v_j = -q^{j + 2}\frac{[j+1]_q}{q - q^{-1}} v_{j+1}\,,
\quad
k_1 v_j = q^{-2j} v_j\,.
\end{align*}
It is easy to check that this action cannot be extended 
to that of the quantum affine algebra $U_q(\widehat{\mathfrak{sl}}_2)$.

We note that, in contrast to the case $L^-_{1,1}$, the normalized $q$-character 
\begin{align*}
\tilde{\chi}_q\bigl(L_{1,1}^+\bigr)=\sum_{j=0}^\infty[q^{-2j}]
\end{align*}
is independent of $z$ and is {\it not} a formal power series in 
$A_{1,a}^{-1}$'s.

Let us study another example for $U_q\bigl(\widehat{\mathfrak{sl}}_3\bigr)$.
Consider the KR modules 
\begin{align*}
W_k=L(N_k),\quad N_k= Y_{1,q}Y_{1,q^3}\cdots Y_{1,q^{2k-1}}.
\end{align*}
It has a basis $\{v_{n,n'}\}_{0\leq n'\leq n\leq k}$ 
with the action 
\begin{align*}
&\tilde{x}_{1,m}^+ v_{n,n'} 
= q^{2m(1 + k - n)} [n-n']_q \,v_{n - 1,n'}\,,
\quad
\tilde{x}_{2,m}^+ v_{n,n'} = q^{m(3 + 2k - 2n')} \, v_{n,n' - 1}\,,
\\
&\tilde{x}_{1,p}^- v_{n,n'} = 
q^{-k +2n - n'+2 + 2p(k - n)} 
[k-n]_q \, v_{n + 1,n'},
\\
&\tilde{x}_{2,p}^- v_{n,n'} = q^{- n + 2n' +2+ p(1 + 2k - 2n')} [n'+1]_q [n-n']_q \, v_{n,n' + 1},
\\
&\tilde{\phi}_1^{\pm}(z)v_{n,n'} =  
\frac{(1 - z) (1 - zq^{2(1 + k - n')})}{(1 - z q^{2(k - n)} ) (1 - zq^{2(1+k-n)})} \, v_{n,n'},
\\
&\tilde{\phi}_2^{\pm}(z)v_{n,n'} =  
\frac{(1 - zq^{1 + 2k - 2n}) (1 - zq^{3 + 2k})}{(1 - z q^{1 + 2k - 2n'}) (1 - zq^{3 + 2k - 2n'})}
\, v_{n,n'}\,.
\end{align*}
For $m\geq 0$ and $p > 0$, these operators converge when $k\rightarrow \infty$,
\begin{align*}
&\tilde{x}_{1,m}^+ v_{n,n'} = \delta_{m,0}[n-n']_q\,  v_{n - 1,n'}\,,
\quad
\tilde{x}_{1,p}^- v_{n,n'} = \delta_{p,1}
\frac{- q^{n - n'+2}}{q - q^{-1}}\, v_{n + 1,n'},
\\
&\tilde{x}_{2,m}^+ v_{n,n'} = \delta_{m,0}\,  v_{n,n' - 1}\,,
\quad\tilde{x}_{2,p}^- v_{n,n'} = 0\,,
\\
&\tilde{\phi}_1^+(z) v_{n,n'} =  (1 - z)  v_{n,n'}\,,
\quad \tilde{\phi}_2^+(z)v_{n,n'} =  v_{n,n'}\,.
\end{align*}
In addition, the operator
$e_0 = \tilde{x}_{2,1}^- \tilde{x}_{1,0}^-  - q \tilde{x}_{1,0}^- \tilde{x}_{2,1}^-$ 
also converges since
\begin{align*}
e_0\, v_{n,n'}& =[n'+1]_q [k-n]_q q^{k+4}
\, v_{n+1,n'+1}\,
\quad\longrightarrow
\quad 
[n'+1]_q \frac{-q^{n+4}}{q - q^{-1}}  
\, v_{n+1,n'+1}.
\end{align*}
In particular we get an  
asymptotic 
action of the Borel algebra
on $W_\infty=\oplus_{0\le n'\le n}\C v_{n,n'}$.
The action of $\tilde{x}_{1,0}^-$ does not converge 
but the action of $\tilde{x}_{2,0}^-$ is constant.
This example appeared in \cite{BHK}.

\subsection{First approach}\label{firsta}

From now on, we shall be concerned with the family of KR modules of 
$U_q(\g)$
\begin{align*}
W_k=L(N_k),\quad 
N_k = Y_{i,q_i^{2k - 1}}\cdots Y_{i,q_i^3}Y_{i,q_i}\,,
\end{align*}
with the convention $N_{0} = 1$. 
More generally we consider for $k\ge l\ge 0$ the modules
$W_{k,l}=L(N_{k,l})$ with 
$N_{k,l}=Y_{i,q_i^{2k-1}}\cdots Y_{i,q_i^{2k-2l+1}}$. 
We fix a highest weight vector $w_{k,l}\in W_{k,l}$ and write $w_k=w_{k,k}$. 

We have a unique isomorphism of vector spaces
\begin{align*}
&H_{k,l}~:~W_l\longrightarrow W_{k,l},
\end{align*}
such that $H_{k,l}w_l=w_{k,l}$ and 
\begin{align}
&x\,H_{k,l}=H_{k,l}\, \tau_{q_i^{2(k-l)}}(x)\quad
(x\in U_q(\g))\,, 
\label{taua}
\end{align}
where $\tau_a$ denotes the automorphism of $U_q(\g)$ given in  \eqref{tau-a}. 

Decomposing the monomial as $N_k=N_{k,l}N_{k-l}$, 
we consider 
the corresponding morphism of $U_q(\g)^+$-modules in Theorem \ref{H3}
\begin{align*}
&G_{k,l}~:~ W_{k,l}\longrightarrow W_k^{\ge(2k-2l+1)d_i}\,,
\end{align*}
normalized as $G_{k,l}w_{k,l}=w_k$. 
Set further
\begin{align*}
I_{k,l}=G_{k,l}\circ H_{k,l}~:~W_l\longrightarrow W_k^{\ge(2k-2l+1)d_i}.
\end{align*}
Clearly
\begin{align*}
&I_{k,l}\circ I_{l,m}=I_{k,m}\quad (k\ge l\ge m),\quad I_{k,k}=\mathrm{id},
\end{align*}
so that $(\{W_k\},\{I_{k,l}\})$ constitutes an inductive system of linear spaces. 
Let 
\begin{align*}
W_\infty=\underset{\longrightarrow}{\lim}W_k,\quad 
w_\infty=I_{\infty,k}w_k, 
\end{align*}
where $I_{\infty,k}: W_k\to W_\infty$
denotes the injective linear map
satisfying the condition $I_{\infty,k}\circ I_{k,l}=I_{\infty,l}$. 

Combining \eqref{taua} with \eqref{Fx}, \eqref{Fphi} we find that 
\begin{align}
&x^+_{j,r}I_{k,l}=q_i^{2(k-l)r}I_{k,l}x^+_{j,r}\,,
\label{Ix}\\
&\phi^{\pm}_j(z)I_{k,l}=
I_{k,l}\phi^{\pm}_j(zq_i^{2(k-l)})\times
\begin{cases}
q_i^{k-l}\frac{1-z}{1-q_i^{2(k-l)}z}& (j=i),\\
1 & (j\neq i).\\
\end{cases}
\label{Iphi}
\end{align}
In particular, $k_j$ is constant if $j\neq i$ and we have
$k_i I_{k,l}=q_i^{k-l}I_{k,l}k_i$.

\begin{prop}\label{limhphi} Consider the limit $k\to\infty$.  

(i) For $r\ge 1$, the operator $I_{k,l}^{-1}x^+_{j,r}I_{k,l}$ converges to $0$. 
For $r=0$ it stays constant. 

(ii) The operator  $I_{k,l}^{-1}\tilde{\phi}_j^+(z)I_{k,l}$ converges to $1-\delta_{i,j}z$.
\end{prop}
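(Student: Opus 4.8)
The plan is to read off both statements directly from the structural formulas \eqref{Ix} and \eqref{Iphi}, which encode the effect of the transition maps $I_{k,l}$ on the relevant generators. For part (i), I would conjugate \eqref{Ix} to obtain $I_{k,l}^{-1}x^+_{j,r}I_{k,l}=q_i^{2(k-l)r}x^+_{j,r}$. Since we are working under the assumption $|q|<1$ (stated at the opening of Section~\ref{asymptpplus}), we have $q_i=q^{d_i}$ with $|q_i|<1$, so the scalar $q_i^{2(k-l)r}$ tends to $0$ as $k\to\infty$ precisely when $r\ge 1$, and equals $1$ identically when $r=0$. This gives both claims of (i) at once; the only point to verify is that the action of $x^+_{j,r}$ on each weight space of $W_l$ is itself bounded (indeed constant) so that multiplying by the vanishing scalar genuinely yields convergence to $0$ in $\mathrm{End}(W_l)$.

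For part (ii), I would conjugate \eqref{Iphi} to write, in the case $j=i$,
\begin{align*}
I_{k,l}^{-1}\tilde{\phi}_i^+(z)I_{k,l}
=q_i^{k-l}\frac{1-z}{1-q_i^{2(k-l)}z}\,\tilde{\phi}_i^+(q_i^{2(k-l)}z)\,,
\end{align*}
and for $j\ne i$ simply $I_{k,l}^{-1}\tilde{\phi}_j^+(z)I_{k,l}=\tilde{\phi}_j^+(q_i^{2(k-l)}z)$. As $k\to\infty$, the factor $q_i^{2(k-l)}$ tends to $0$, so $\tilde{\phi}_j^+(q_i^{2(k-l)}z)$ converges to its constant term $\tilde{\phi}_{j,0}^+=1$, and the rational prefactor $q_i^{k-l}(1-z)/(1-q_i^{2(k-l)}z)$ converges to $\ldots$ -- here is the delicate point. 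The prefactor $q_i^{k-l}$ alone tends to $0$, which would force the whole limit to $0$ rather than to $1-\delta_{i,j}z$. The resolution must be that the normalization is taken relative to the highest-weight eigenvalue: on the highest weight vector $\tilde{\phi}_i^+(z)w_l$ carries a factor $q_i^{l}$ and more generally the operator $\tilde{\phi}_i^+(q_i^{2(k-l)}z)$ has leading scalar behaviour $q_i^{-(k-l)}$ (compare the explicit $\widehat{\mathfrak{sl}}_2,\widehat{\mathfrak{sl}}_3$ formulas, where $\tilde\phi_1^\pm$ begins with $q^{k-2j}$-type factors) that exactly cancels the $q_i^{k-l}$ in front.

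Concretely, I would make this cancellation rigorous by evaluating $\tilde{\phi}_i^+(q_i^{2(k-l)}z)$ on a fixed $\ell$-weight basis of $W_l$ and tracking the order in $q_i^{k-l}$: the eigenvalue of $\tilde{\phi}_i^+(w)$ is a ratio of products $(1-w\,q_i^{\ast})$ with a scalar prefactor, and substituting $w=q_i^{2(k-l)}z$ produces a compensating power of $q_i^{k-l}$. Thus \textbf{the main obstacle} is bookkeeping the precise power of $q_i^{k-l}$ emitted by $\tilde{\phi}_i^+(q_i^{2(k-l)}z)$ so that it cancels the explicit $q_i^{k-l}$ in \eqref{Iphi}, leaving a finite nonzero limit; once that is settled the remaining factor $(1-z)/(1-q_i^{2(k-l)}z)\to 1-z$ gives the stated answer $1-z$ for $j=i$ and $1$ for $j\ne i$. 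Finally, as in Propositions~\ref{cvg} and~\ref{cvg2}, I would note that these limits are compatible with the maps $I_{l,m}$ and hence descend to well-defined operators on $W_\infty$, so the convergence takes place genuinely in the inductive limit.
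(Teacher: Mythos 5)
Your part (i) is correct and is exactly the paper's argument: conjugating \eqref{Ix} gives the operator identity $I_{k,l}^{-1}x^+_{j,r}I_{k,l}=q_i^{2(k-l)r}\,x^+_{j,r}$ on the \emph{fixed} finite-dimensional space $W_l$, so with $|q|<1$ the scalar $q_i^{2(k-l)r}$ kills it for $r\ge 1$ and leaves it constant for $r=0$; no separate boundedness check is needed since $x^+_{j,r}$ is one fixed endomorphism of $W_l$ independent of $k$.

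Part (ii), however, contains a genuine error, and the repair you sketch would not work. Formula \eqref{Iphi} is stated for the \emph{untilded} series $\phi^\pm_j(z)$; you transported it verbatim to $\tilde{\phi}_j^+(z)$, and that is the sole source of the spurious vanishing prefactor $q_i^{k-l}$. The mechanism you then invoke --- a compensating power $q_i^{-(k-l)}$ "emitted" by $\tilde{\phi}_i^+(q_i^{2(k-l)}z)$ --- does not exist: on the fixed module $W_l$ the series $\tilde{\phi}_i^+(w)$ has constant term $\tilde{\phi}_{i,0}^+=1$ (a defining relation of the asymptotic algebra; on $W_l$ one has $\tilde{\phi}^+_{i,0}=k_i^{-1}\phi^+_{i,0}=\mathrm{id}$ by \eqref{Dri-gen}), so substituting $w=q_i^{2(k-l)}z$ sends every coefficient of $z^m$ with $m\ge 1$ to zero and the series to the identity; nothing blows up. (The $q^{k-2j}$-type prefactors you cite from the examples are attached to $\phi_1^\pm$, not to $\tilde{\phi}_1^\pm$ --- compare the explicit $\widehat{\mathfrak{sl}}_3$ formulas in Section \ref{asymptpplus}, where $\tilde{\phi}_1^\pm(z)$ is prefactor-free.) The correct resolution is the twisted commutation of $k_i$ with $I_{k,l}$: the degree-zero part of \eqref{Iphi} gives $k_iI_{k,l}=q_i^{k-l}I_{k,l}k_i$ (noted in the text right after \eqref{Iphi}), hence $I_{k,l}^{-1}k_i^{-1}I_{k,l}=q_i^{-(k-l)}k_i^{-1}$, and since $\tilde{\phi}_i^+(z)=k_i^{-1}\phi_i^+(z)$ by \eqref{Dri-gen}, conjugation yields
\begin{align*}
I_{k,l}^{-1}\tilde{\phi}_i^+(z)I_{k,l}
= q_i^{-(k-l)}k_i^{-1}\cdot q_i^{k-l}\,\frac{1-z}{1-q_i^{2(k-l)}z}\,\phi_i^+\bigl(q_i^{2(k-l)}z\bigr)
= \frac{1-z}{1-q_i^{2(k-l)}z}\,\tilde{\phi}_i^+\bigl(q_i^{2(k-l)}z\bigr)\,,
\end{align*}
with the $q_i^{k-l}$ cancelled \emph{identically, for every $k$}, not just in the limit. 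Now $|q|<1$ gives the limit $(1-z)\cdot 1=1-z$, and for $j\neq i$, $k_j$ commutes with $I_{k,l}$, so $I_{k,l}^{-1}\tilde{\phi}_j^+(z)I_{k,l}=\tilde{\phi}_j^+\bigl(q_i^{2(k-l)}z\bigr)\to 1$; this is the tacit computation behind the paper's one-line proof. Your closing remark on compatibility with the maps $I_{l,m}$ and descent to $W_\infty$ is fine as stated.
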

\begin{proof}
This follows from \eqref{Ix} and \eqref{Iphi}.
\end{proof}

\begin{prop} Let $j\in I\setminus\{i\}$ and $r\in\ZZ$. The action of the operator 
\begin{align*}
I_{k,l+1}^{-1} \bigl(q_i^{-2kr}\tilde{x}_{j,r}^- \bigr)I_{k,l}
\ \in \mathrm{Hom}(W_l,W_{l+1})
\end{align*}
stays constant. In particular, $I_{k,l+1}^{-1} \tilde{x}_{j,r}^- I_{k,l}$
converges to $0$ if $r \geq 1$ and stays constant if $r = 0$.
\end{prop}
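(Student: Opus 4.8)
The plan is to adapt the argument of Proposition~\ref{cvg2}, the essential simplification being that for $j\neq i$ the conjugated operator is not merely convergent but exactly independent of $k$. The normalization by $q_i^{-2kr}$ is forced by the twist built into $H_{k,l}$: since $\tilde{x}^-_{j,r}=\kappa_j x^-_{j,r}$ and $\tau_a(\kappa_j)=\kappa_j$, relation~\eqref{tau-a} gives $\tau_a(\tilde{x}^-_{j,r})=a^r\tilde{x}^-_{j,r}$, so that \eqref{taua} yields $\tilde{x}^-_{j,r}H_{k,l}=q_i^{2(k-l)r}H_{k,l}\tilde{x}^-_{j,r}$; conjugation by $H_{k,l}$ alone already multiplies $\tilde{x}^-_{j,r}$ by $q_i^{2(k-l)r}$, and the factor $q_i^{-2kr}$ is tuned to absorb this growth. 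Before starting the induction I would record that $I_{k,l+1}^{-1}\tilde{x}^-_{j,r}I_{k,l}\in\mathrm{Hom}(W_l,W_{l+1})$ is well defined: for $j\neq i$ one has $\tilde{x}^-_{j,r}W_k^{\ge(2k-2l+1)d_i}\subset W_k^{\ge(2k-2l+1)d_i}\subset\mathrm{Im}(I_{k,l+1})$, the analogue for $W_k$ of the lemma preceding Proposition~\ref{cvg2}, proved by the same coproduct and $q$-character arguments as in \cite[Lemma 4.4]{hcr}.

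Fix a weight space of $W_l$, take $v\in(W_l)_{\omega_l\overline{\beta}^{-1}}$ and set $w_k=I_{k,l+1}^{-1}\tilde{x}^-_{j,r}I_{k,l}v$. I would proceed by induction on $\beta\in Q^+$ for the ordering~\eqref{partial}, the aim at each stage being to show that $x^+_{j',r'}\bigl(q_i^{-2kr}w_k\bigr)$ is independent of $k$ for every $j'\in I$ and $r'\in\ZZ$. For this I would first move $x^+_{j',r'}$ across the two copies of $I$ by~\eqref{Ix}, which produces a factor $q_i^{-2(k-l-1)r'}$ on the left and $q_i^{2(k-l)r'}$ on the right, and then commute $x^+_{j',r'}=\tilde{x}^+_{j',r'}$ past $\tilde{x}^-_{j,r}$ by the defining relation of $\Utg$,
\[
\tilde{x}_{j',r'}^+\,\tilde{x}_{j,r}^-
= q_{j'}^{C_{j',j}}\,\tilde{x}_{j,r}^-\,\tilde{x}_{j',r'}^+
+ q_{j'}^{C_{j',j}}\,\delta_{j',j}\,
\frac{\tilde{\phi}^+_{j,r+r'}-\tilde{\phi}^-_{j,r+r'}}{q_j-q_j^{-1}}\,.
\]
This splits $x^+_{j',r'}(q_i^{-2kr}w_k)$ into a transport term and, when $j'=j$, an additional $\tilde{\phi}$-term.

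For the transport term the vector $x^+_{j',r'}v$ lies in the strictly smaller weight space indexed by $\beta-\alpha_{j'}$, so the induction hypothesis applies, and collecting the powers of $q_i$ leaves $q_i^{2r'}q_{j'}^{C_{j',j}}$ times $q_i^{-2kr}I_{k,l+1}^{-1}\tilde{x}^-_{j,r}I_{k,l}(x^+_{j',r'}v)$, which is $k$-independent. For the $\tilde{\phi}$-term (only $j'=j\neq i$), relation~\eqref{Iphi} gives $\phi^\pm_{j,\pm m}I_{k,l}=q_i^{\pm2(k-l)m}I_{k,l}\phi^\pm_{j,\pm m}$, and since $k_j$ commutes with $I_{k,l}$ for $j\neq i$ the same scaling holds for $\tilde{\phi}^\pm_{j,\pm m}=\kappa_j\phi^\pm_{j,\pm m}$; inserting the index $m=|r+r'|$ of $\tilde{\phi}^\pm_{j,r+r'}$, all the $k$-dependent exponents of $q_i$ cancel and one is left with a $k$-independent scalar multiple of $I_{l+1,l}\tilde{\phi}^\pm_{j,r+r'}v$. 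I expect this bookkeeping to be the main obstacle: one must verify that the normalization $q_i^{-2kr}$ exactly annihilates the growth coming from both \eqref{Ix} and \eqref{Iphi} at once, for both the transport and the $\tilde{\phi}$ contributions.

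Granting that $x^+_{j',r'}(q_i^{-2kr}w_k)$ is $k$-independent for all $j',r'$, I would conclude as in Proposition~\ref{cvg2}. The vector $q_i^{-2kr}w_k$ has weight $\omega_{l+1}\overline{\alpha}_j^{-1}$, strictly below the highest weight of the simple module $W_{l+1}$, and the joint kernel of the $x^+_{j',r'}$ on weight spaces below $\omega_{l+1}$ is zero; hence the difference of $q_i^{-2kr}w_k$ for two values of $k$ is annihilated by all $x^+_{j',r'}$ and therefore vanishes. Thus $q_i^{-2kr}w_k$ is independent of $k$, and the same compatibility with the transition maps $I_{l+1,l}$ as in Proposition~\ref{cvg2} promotes these operators to a well-defined endomorphism of $W_\infty$. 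The final assertion is then immediate: writing $I_{k,l+1}^{-1}\tilde{x}^-_{j,r}I_{k,l}=q_i^{2kr}C$ with $C$ constant, and recalling $|q|<1$ in this section, $q_i^{2kr}\to0$ as $k\to\infty$ when $r\ge1$, while $r=0$ leaves the operator unchanged.
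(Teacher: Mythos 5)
Your proof is correct and follows essentially the same route as the paper's: induction on the weight $\beta$ with respect to the ordering \eqref{partial}, commuting $x^+_{j',r'}$ across $I_{k,l+1}^{-1}$ and $I_{k,l}$ via \eqref{Ix}, isolating the $\tilde{\phi}$-term when $j'=j$, and concluding from the simplicity of $W_{l+1}$ (trivial joint kernel of the $x^+_{j',r'}$ below the highest weight). The only divergence is that where the paper invokes the quasi-polynomiality argument of Proposition \ref{fd} to assume $r'$ large so that only $\tilde{\phi}^+_{j,r+r'}$ survives, you treat $\tilde{\phi}^+$ and $\tilde{\phi}^-$ on an equal footing — legitimately, since both $\tilde{\phi}^{\pm}_{j,s}$ scale by $q_i^{2(k-l)s}$ under conjugation by $I_{k,l}$ for $j\neq i$, so the cancellation is uniform; your explicit check that $\tilde{x}^-_{j,r}$ preserves $W_k^{\ge(2k-2l+1)d_i}$, making the operator well defined in $\mathrm{Hom}(W_l,W_{l+1})$, supplies a point the paper leaves implicit.
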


\begin{proof}
We adapt the argument in the proof of Proposition \ref{cvg}.
Let $\omega_l\in\tb^*$ be the highest weight of $W_l$. 
We prove by induction on $\beta\in Q$ 
that when $k\rightarrow \infty$ 
the operator $I_{k,l+1}^{-1}\bigl(q_i^{-2kr}\tilde{x}_{j,r}^- \bigr) I_{k,l}$ 
stays constant on $(W_l)_{\omega_l\overline{\beta}^{-1}}$.  
When $\beta\notin Q^+$ this is clear as $(W_l)_{\omega_l\overline{\beta}^{-1}} = 0$. 
For an arbitrary $\beta$, 
it suffices to prove that
$\tilde{x}_{j',r'}^+ I_{k,l+1}^{-1}\bigl( q_i^{-2kr}\tilde{x}_{j,r}^- \bigr)I_{k,l}$
is constant on $(W_L)_{\omega_l\overline{\beta}^{-1}}$
for all $j'\in I$ and $r'\geq 0$. Furthermore, by the argument of the proof of Proposition \ref{fd}, 
we may assume without loss of generality that $r' > r$. 

We have
\begin{align*}
\tilde{x}_{j',r'}^+ \left(I_{k,l+1}^{-1} q_i^{-2kr}
\tilde{x}_{j,r}^- I_{k,l}\right)
&= q_j^2 \left(I_{k,l+1}^{-1} q_i^{-2kr}\tilde{x}_{j,r}^- I_{k,l}\right)
\tilde{x}_{j',r'}^+ \\
&+\delta_{j,j'} \frac{q_j^2 q_i^{-2kr + 2r'(l-k)}
\left(I_{k,l+1}^{-1}\tilde{\phi}_{j,r+r'}^+ 
I_{k,l}\right)}{q_j - q_j^{- 1}}\,.
\end{align*}
In the right hand side,  
the first term is constant by the induction hypothesis.
The second term equals up to a constant multiple
\begin{align*}
\delta_{j,j'} q_i^{2r'(l-k)-2kr}\left(I_{k,l+1}^{-1}\tilde{\phi}_{j,r+r'}^+ 
I_{k,l}\right) = \delta_{j,j'}q_i^{-2 l r} \tilde{\phi}_{j,r+r'}^+\,,
\end{align*}
which is constant. 
\end{proof}

\begin{prop} Let $r\in\ZZ$. We have
\begin{align*}
I_{k,l+1}^{-1} \bigl(q_i^{2k(1 - r)}\tilde{x}_{i,r}^- \bigr)I_{k,l}
\ = A_r + q_i^{2k} B_r\in \mathrm{Hom}(W_l,W_{l+1})
\end{align*}
where $A_r, B_r\in \mathrm{Hom}(W_l,W_{l+1})$ are constant operators.

In particular, $I_{k,l+1}^{-1} \tilde{x}_{i,r}^- I_{k,l}$
converges to $0$ if $r\geq 2$ and converges if $r = 1$.
\end{prop}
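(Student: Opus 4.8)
The plan is to adapt the inductive scheme of Proposition \ref{cvg2} to the normalization $q_i^{2k(1-r)}$. Fix $v\in (W_l)_{\omega_l\overline{\beta}^{-1}}$ and set $w_k=q_i^{2k(1-r)}I_{k,l+1}^{-1}\tilde{x}^-_{i,r}I_{k,l}\,v$. I would prove by induction on $\beta\in Q^+$, with respect to the ordering \eqref{partial}, that $w_k=A+q_i^{2k}B$ with $A,B$ independent of $k$, the case $\beta\notin Q^+$ being empty. Since $W_{l+1}$ is simple, the simultaneous kernel of the operators $\tilde{x}^+_{j',r'}$ on any weight space strictly below $\omega_{l+1}$ is zero, so it suffices to check that each $\tilde{x}^+_{j',r'}w_k$ ($j'\in I$, $r'\ge 0$) has this form; moreover, as in the proof of Proposition \ref{fd} one may take $r'$ arbitrarily large, so that $r+r'>0$ and the $\tilde\phi^-$ contribution below will vanish.

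Applying $\tilde{x}^+_{j',r'}$ and using \eqref{Ix} to move it past $I_{k,l+1}^{-1}$ and $I_{k,l}$, together with the Drinfeld relation exchanging $\tilde{x}^+_{j',r'}$ and $\tilde{x}^-_{i,r}$ in $\Utg$, splits $\tilde{x}^+_{j',r'}w_k$ into two terms. The first is a constant (in $k$) multiple of $\bigl(q_i^{2k(1-r)}I_{k,l+1}^{-1}\tilde{x}^-_{i,r}I_{k,l}\bigr)\tilde{x}^+_{j',r'}v$, the $k$-dependent prefactors cancelling to the constant $q_i^{2r'}q_{j'}^{C_{j',i}}$ exactly as in Proposition \ref{cvg2}; since $\tilde{x}^+_{j',r'}v$ lies in the strictly lower weight space indexed by $\beta-\alpha_{j'}$, this term is of the form $A+q_i^{2k}B$ by the induction hypothesis. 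The second term is present only for $j'=i$ and equals a $k$-independent constant times $q_i^{2k(1-r)-2kr'}\,I_{k,l+1}^{-1}\tilde\phi^+_{i,r+r'}I_{k,l}\,v$.

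The main obstacle is to control this $\tilde\phi^+$ term. Combining \eqref{Iphi} with $\tilde\phi^+_i(z)=k_i^{-1}\phi_i^+(z)$ and $k_iI_{k,l}=q_i^{k-l}I_{k,l}k_i$, I would first record the clean operator identity
\begin{equation*}
I_{k,l}^{-1}\tilde\phi_i^+(z)I_{k,l}=\tilde\phi_i^+(zq_i^{2(k-l)})\,\frac{1-z}{1-q_i^{2(k-l)}z}.
\end{equation*}
On the KR module $W_l$ the current $\tilde\phi_i^+(w)$ is an $\mathrm{End}(W_l)$-valued rational function, equal to $1$ at $w=0$, whose entries share a common denominator $\prod_t(1-b_tw)$ with numerators of no larger degree. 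Writing $Q=q_i^{2(k-l)}$, the right-hand side thus becomes $(1-z)\prod_s(1-a_sQz)$ over $(1-Qz)\prod_t(1-b_tQz)$, a quotient of polynomials of equal $z$-degree in which every factor of the denominator carries one power of $Q$ and the only $Q$-free factor of the numerator is the degree-one polynomial $1-z$. A direct coefficient count then shows that the $z^m$-coefficient involves only the two powers $Q^{m-1}$ and $Q^m$, i.e. $I_{k,l}^{-1}\tilde\phi^+_{i,m}I_{k,l}=q_i^{2k(m-1)}C+q_i^{2km}D$ with $C,D$ constant (and likewise after composing with the constant map $I_{l+1,l}$). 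Taking $m=r+r'$, the accompanying power $q_i^{2k(1-r)-2kr'}=q_i^{-2k(m-1)}$ rescales these two terms precisely to $q_i^0$ and $q_i^{2k}$, so the $\tilde\phi^+$ term is again of the form $A+q_i^{2k}B$.

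Combining the two contributions shows $\tilde{x}^+_{j',r'}w_k=A+q_i^{2k}B$ for all (large) $r'$ and all $j'$; the injectivity furnished by the simplicity of $W_{l+1}$ then forces $w_k$ itself to be of this form, which completes the induction and produces $A_r,B_r$. Well-definedness on $W_\infty$ follows as in Proposition \ref{cvg2}. Finally, the last assertion is immediate: $I_{k,l+1}^{-1}\tilde{x}^-_{i,r}I_{k,l}=q_i^{2k(r-1)}A_r+q_i^{2kr}B_r$, which, since $|q|<1$, tends to $0$ for $r\ge2$ (both exponents being positive) and to $A_1$ for $r=1$.
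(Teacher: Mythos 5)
Your proof is correct and follows essentially the same route as the paper: the same induction on the weight $\beta$, the same reduction to large $r'$ via the argument of Proposition \ref{fd} so that only $\tilde{\phi}^+_{i,r+r'}$ appears, and the same injectivity from simplicity of $W_{l+1}$ to descend from $\tilde{x}^+_{j',r'}w_k$ to $w_k$. The only variation is in extracting the $k$-dependence of the $\tilde{\phi}^+$ term: the paper changes variables in \eqref{Iphi} to get the explicit identity $q_i^{2k(1-r)+2r'(l-k)}I_{k,l+1}^{-1}\tilde{\phi}^+_{i,r+r'}I_{k,l}=\bigl[q_i^{-2lr}(q_i^{2k}-zq_i^{2l})\tilde{\phi}_i^+(z)/(1-z)\bigr]_{r+r'}$, whereas you invoke rationality of $\tilde{\phi}_i^+$ on $W_l$ and count powers of $Q=q_i^{2(k-l)}$ --- an equivalent (indeed slightly heavier than needed, since writing $\tilde{\phi}_i^+(Qz)\frac{1-z}{1-Qz}=(1-z)H(Qz)$ with $H$ a fixed power series already gives the $Q^{m-1},Q^m$ structure without rationality) way to reach the same conclusion.
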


\begin{proof} 
The proof is analogous to the proof of the previous proposition and
we retain the notation there. 
We prove by induction on $\beta\in Q$ 
that the operator $I_{k,l+1}^{-1}\tilde{x}_{i,r}^- I_{k,l}$ 
is of the form $A_r + q_i^{2k} B_r$ on $(W_l)_{\omega_l\overline{\beta}^{-1}}$.  
When $\beta\notin Q^+$ this is clear. For an arbitrary $\beta$, 
it suffices to prove that
$\tilde{x}_{j',r'}^+ I_{k,l+1}^{-1}\bigl( q_i^{2k(1 - r)}\tilde{x}_{i,r}^- \bigr)I_{k,l}$
is of this form on $(W_L)_{\omega_l\overline{\beta}^{-1}}$
for all $j'\in I$, $r'\geq 0$.  
We may assume that $r' > r$ and we have
\begin{align*}
&\tilde{x}_{j',r'}^+ \left(I_{k,l+1}^{-1} q_i^{2k(1 - r)}
\tilde{x}_{i,r}^- I_{k,l}\right)
= q_i^2 \left(I_{k,l+1}^{-1} q_i^{2k(1 - r)}\tilde{x}_{i,r}^- I_{k,l}\right)
\tilde{x}_{j',r'}^+ \\
&+\delta_{i,j'} q_i^2  \frac{q_i^{2k(1 - r)}q_i^{2r'(l-k)}
\left(I_{k,l+1}^{-1}\tilde{\phi}_{i,r+r'}^+ 
I_{k,l}\right)}{q_i - q_i^{- 1}}.
\end{align*}
The first term in 
the right side is of the correct form from the induction hypothesis.
The second term for $i=j'$ is equal, up to a constant multiple, to
\begin{align*}
q_i^{2k(1 - r) + 2r'(l-k)}I_{k,l+1}^{-1}\tilde{\phi}_{i,r+r'}^+ 
I_{k,l} 
&= 
\left[\frac{q_i^{-2 lr}(q_i^{2k}-zq_i^{2l})\tilde{\phi}_i^+(z)}
{1-z}
\right]_{r + r'}
\\
&=\left[
\frac{-zq_i^{2l(1-r)}\tilde{\phi}_i^+(z)}
{1-z}
\right]_{r + r'}
+q_i^{2k-2lr}  
\left[
\frac{\tilde{\phi}_i^+(z)}
{1-z}
\right]_{r + r'}\,,
\end{align*}
where we write $c_n=[f(z)]_n$ for a formal power series $f(z)=\sum_{n=0}^\infty c_nz^n$.  
This is also in the correct form.
\end{proof}

Let $L\geq 1$ be the length of the maximal root of $\gb$
and $\cN_i\geq 1$ 
be the multiplicity of $\alpha_i$ in the maximal root of $\gb$.

\begin{prop}We have
\begin{align*}
I_{k,l+L}^{-1} e_0\, 
I_{k,l}\ = q_i^{2k(1-\cN_i)}  
\sum_{0\leq p\leq N}q_i^{2kp}A_p\in \mathrm{Hom}(W_l,W_{l+L})
\end{align*}
where for $0\leq p\leq N$, $A_p\in \mathrm{Hom}(W_l,W_{l+L})$ is a constant operator.

In particular,  
$I_{k,l+L}^{-1} e_0\, I_{k,l}$ 
is convergent if $\cN_i = 1$.
\end{prop}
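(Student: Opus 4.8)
The plan is to compute the action of $e_0$ on $W_\infty$ by expressing $e_0 = y$ through the commutator formula \eqref{e0} and tracking how each factor scales under the transport operators $I_{k,l}$. Recall from \eqref{e0} that $e_0$ corresponds to an iterated $q$-commutator $y = [\tilde{x}^-_{i_1,0},[\tilde{x}^-_{i_2,0},\cdots,[\tilde{x}^-_{i_k,0},\tilde{x}^-_{j_0,1}]_q\cdots]_q]_q$, where the indices $i_1,\ldots,i_k,j_0$ are the simple roots appearing in the maximal root $\theta$ of $\gb$. The key observation is that $e_0$ has weight $-\theta$, so transporting it through $I_{k,l}$ shifts the grading by $L$ steps (the height of $\theta$), which is exactly why the source and target modules are $W_l$ and $W_{l+L}$. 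The strategy is to conjugate this explicit expression for $e_0$ by $I_{k,\bullet}$ and collect powers of $q_i^{2k}$.

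First I would set up the scaling bookkeeping using the preceding three propositions. Each factor $\tilde{x}^-_{j,0}$ with $j\neq i$ is transported with no $q_i^{2k}$ dependence (it stays constant), and the one factor $\tilde{x}^-_{j_0,1}$ with a level-$1$ index contributes, by the previous proposition, a factor of the form $A_1 + q_i^{2k}B_1$ after multiplying by $q_i^{2k(1-1)} = 1$; more precisely $I^{-1}_{k,\bullet}\tilde{x}^-_{i,1}I_{k,\bullet}$ converges. The crucial multiplicities are captured by $\cN_i$, the number of times $\alpha_i$ occurs in $\theta$: among the generators $\tilde{x}^-_{i_s,0}$ and $\tilde{x}^-_{j_0,1}$, exactly $\cN_i$ of the indices equal $i$, and each such factor with second index $0$ is transported using $I^{-1}_{k,l+1}\tilde{x}^-_{i,0}I_{k,l}$, which by the previous proposition equals $q_i^{-2k}(A_0 + q_i^{2k}B_0) = q_i^{-2k}A_0 + B_0$, i.e.\ carries a $q_i^{-2k}$ divergence. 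I would thus multiply the whole expression by $q_i^{2k(\cN_i-1)}$ to clear the worst divergences, accounting for the $\cN_i$ occurrences of $\alpha_i$ (one of which is the level-$1$ generator that is already benign). After this renormalization, each conjugated factor becomes a finite-degree polynomial in $q_i^{2k}$ with constant operator coefficients, and since these factors are composed and $q$-commuted according to \eqref{e0}, the result is a finite sum $q_i^{2k(1-\cN_i)}\sum_{0\le p\le N}q_i^{2kp}A_p$ with constant $A_p\in\mathrm{Hom}(W_l,W_{l+L})$.

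The main obstacle I expect is the precise exponent bookkeeping: verifying that after renormalizing by $q_i^{2k(1-\cN_i)}$ the powers of $q_i^{2k}$ that survive are all nonnegative (so that the coefficients $A_p$ for $0\le p\le N$ are genuinely constant and no hidden divergence remains), and pinning down the correct upper bound $N$ for the number of terms. This requires carefully combining the scaling relations \eqref{Ix} and \eqref{Iphi} through the nested $q$-commutators: each inner commutator $[\,\cdot\,,\,\cdot\,]_q$ produces two terms whose $q_i^{2k}$-weights must be compared, and one must check that the $\phi$-correction factors in \eqref{Iphi} (which introduce the rational prefactor $q_i^{k-l}\frac{1-z}{1-q_i^{2(k-l)}z}$ for the $j=i$ index) contribute only nonnegative powers of $q_i^{2k}$ after renormalization. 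Once this is established, the final claim that $I^{-1}_{k,l+L}e_0\,I_{k,l}$ converges when $\cN_i=1$ follows immediately: in that case the prefactor $q_i^{2k(1-\cN_i)} = q_i^0 = 1$, and the surviving sum $\sum_{0\le p\le N}q_i^{2kp}A_p$ has its $p=0$ term constant while all $p\ge 1$ terms, carrying positive powers of $q_i^{2k}$, vanish in the limit since $|q|<1$ implies $q_i^{2k}\to 0$.
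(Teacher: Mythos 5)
Your proposal follows the paper's own proof in essence: the paper likewise just writes $e_0$ via the iterated $q$-bracket \eqref{e0} and observes that the claim follows from the two preceding propositions, applied to each conjugated factor. Two remarks on your execution. First, your anticipated obstacle about the $\phi$-correction factors from \eqref{Iphi} is not a real one: no $\phi$'s enter the conjugation of the bracket, since the $q$-commutators only produce scalar coefficients independent of $k$, and all $k$-dependence is already packaged into the transported factors by the two preceding propositions; once one telescopes
$I_{k,l+L}^{-1}\,\tilde{x}^-_{i_1,0}\cdots\tilde{x}^-_{j_0,1}\,I_{k,l}
=\prod_{m}\bigl(I_{k,l+m}^{-1}\,\tilde{x}^-\,I_{k,l+m-1}\bigr)$,
each factor is by those propositions a Laurent polynomial in $q_i^{2k}$ with constant operator coefficients, so the overall polynomial form is automatic and no return to \eqref{Ix}, \eqref{Iphi} is needed. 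Second --- the one genuine slip --- you assume $j_0=i$, i.e.\ that the unique level-$1$ generator $\tilde{x}^-_{j_0,1}$ in \eqref{e0} carries the index $i$ (``one of which is the level-$1$ generator that is already benign''). There is no reason for this: e.g.\ in type $A$ one may take $j_0=n$ while $i\neq n$. When $j_0\neq i$, all $\cN_i$ occurrences of $i$ among the factors are level-$0$, each transporting as $q_i^{-2k}(A_0+q_i^{2k}B_0)$, while the level-$1$ factor transports as $q_i^{2k}\cdot(\text{constant})$ by the first of the two propositions (case $j\neq i$, $r=1$); the net prefactor is then $q_i^{-2k\cN_i}\cdot q_i^{2k}=q_i^{2k(1-\cN_i)}$, the same as in your case. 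With this second branch added, your exponent bookkeeping is complete (and the nonnegativity of the surviving powers is automatic, not a further verification), and the convergence for $\cN_i=1$ follows as you say from $|q|<1$.
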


\begin{proof} Let us write  $e_0$  
as in (\ref{e0}).  Then it is clear that
for any $k\geq l+L$, the operator $I_{k,l+L}^{-1}x_0^+I_k$ makes sense in $\text{Hom}(W_l,W_{l+L})$.
Then the result follows immediately from the last propositions.
\end{proof}

From here until the end of this 
section \ref{firsta}, 
we assume that $\cN_i = 1$.
We get a structure of $U_q(\mathfrak{b})$-module 
on $W_\infty$ in category $\mathcal{O}$
(with the natural $Q$-grading such that the highest
weight vector has degree $0$).

\begin{thm} $W_\infty$ is irreducible isomorphic to $L_{i,1}^+$ and we have $\tilde{\chi}_q(L_{i,1}^+) = \chi(L_{i,1}^+)$.
\end{thm}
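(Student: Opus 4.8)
The plan is to identify $w_\infty$ as a highest $\ell$-weight vector carrying exactly the highest $\ell$-weight of $L_{i,1}^+$, and then to squeeze the module between $L_{i,1}^+$ and $W_\infty$ by comparing ordinary characters, using the character identity of Theorem \ref{plusexpl}. First I would record that $w_\infty$ is a highest $\ell$-weight vector. By Proposition \ref{limhphi}(i) the operators $I_{k,l}^{-1}x^+_{j,0}I_{k,l}$ stay constant, and since $e_jw_k=0$ in each KR module $W_k$ we obtain $x^+_{j,0}w_\infty=0$, i.e. $e_jw_\infty=0$ for all $j\in I$. By Proposition \ref{limhphi}(ii) the current $\tilde\phi_j^+(z)$ acts on all of $W_\infty$ as the scalar $1-\delta_{i,j}z$, whence $\phi_j^+(z)w_\infty=(1-\delta_{i,j}z)w_\infty$. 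Thus the submodule $W_\infty'=U_q(\bo)w_\infty$ is of highest $\ell$-weight $\Psib(z)=(1,\dots,(1-z),\dots,1)$, which is precisely the highest $\ell$-weight of $L_{i,1}^+$. By Proposition \ref{simple}, $L_{i,1}^+$ is the unique simple quotient of $W_\infty'$, so $\chi(L_{i,1}^+)\preceq\chi(W_\infty')\preceq\chi(W_\infty)$.

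Next I would compute $\chi(W_\infty)$. Because $(\{W_k\},\{I_{k,l}\})$ is an inductive system whose transition maps identify $W_l$ with the subspace $W_k^{\ge(2k-2l+1)d_i}$, the dimension of each weight space of $W_\infty$ is the stable value of the corresponding weight space of $W_k$, so $\chi(W_\infty)=\lim_{k\to\infty}\tilde\chi(W_k)$. Since $W_k=L(N_k)$ and $V_k=L(M_k)$ are Kirillov--Reshetikhin modules attached to the same node $i$ and the same integer $k$, differing only by the spectral parameter, their ordinary characters coincide; hence $\lim_k\tilde\chi(W_k)=\lim_k\tilde\chi(V_k)=\chi(L_{i,1}^-)$, and by Theorem \ref{plusexpl} this equals $\chi(L_{i,1}^+)$. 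Therefore $\chi(W_\infty)=\chi(L_{i,1}^+)$.

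Combining the two chains forces $\chi(L_{i,1}^+)=\chi(W_\infty')=\chi(W_\infty)$. As $W_\infty'\subseteq W_\infty$ have equal finite-dimensional weight spaces, $W_\infty'=W_\infty$; and since the surjection $W_\infty=W_\infty'\twoheadrightarrow L_{i,1}^+$ is a morphism of modules with equal finite characters, its kernel vanishes in every weight space, so it is an isomorphism. Hence $W_\infty\cong L_{i,1}^+$ and in particular $W_\infty$ is irreducible. For the last assertion I would again invoke that $\tilde\phi_j^+(z)$ acts on $W_\infty\cong L_{i,1}^+$ as the scalar $1-\delta_{i,j}z$: on a weight space of weight $\omega$ the current $\phi_j^+(z)=k_j\tilde\phi_j^+(z)$ then acts as $\omega(j)(1-\delta_{i,j}z)$, so each weight space is a single $\ell$-weight space whose normalized $\ell$-weight is the constant $\omega(j)$. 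Under the identification of weights with constant $\ell$-weights this yields $\tilde\chi_q(L_{i,1}^+)=\chi(L_{i,1}^+)$.

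The main obstacle is the character computation $\chi(W_\infty)=\chi(L_{i,1}^+)$: one must know both that the ordinary characters of the KR modules $W_k$ stabilize to give $\chi(W_\infty)$ and that this stable value is the character of $L_{i,1}^+$. This is exactly where Theorem \ref{plusexpl}, and through it the explicit KR character formulas together with the duality identifying $L_{i,1}^{\pm}$, is indispensable. Everything else is the formal squeezing argument above, entirely parallel in spirit to the proof of Theorem \ref{irrasymp}, with the essential simplification that here all the relevant information is already visible at the level of ordinary characters because the normalized $\ell$-weights are constant in $z$.
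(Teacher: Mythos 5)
Your proposal is correct and takes essentially the same route as the paper's proof: you identify $w_\infty$ as a highest $\ell$-weight vector of the $\ell$-weight of $L_{i,1}^+$ (so that $L_{i,1}^+$ is a subquotient of $W_\infty$), compute $\chi(W_\infty)=\lim_k\tilde{\chi}(W_k)=\lim_k\tilde{\chi}(V_k)=\chi(L_{i,1}^-)=\chi(L_{i,1}^+)$ using Theorem \ref{plusexpl}, squeeze to get irreducibility, and deduce $\tilde{\chi}_q=\chi$ from the constant-in-$z$ $\ell$-weights of Proposition \ref{limhphi}, exactly as the paper does in compressed form. Your write-up merely makes explicit steps the paper leaves implicit (the submodule $U_q(\bo)w_\infty$ and the vanishing-kernel argument); the only caveat is notational, since your symbol $W_\infty'$ collides with the paper's later use of the same symbol for a different object.
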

In this case, we get another proof that $L_{i,1}^+$ is in category $\mathcal{O}$.
We also get an explicit $q$-character formula for $L_{i,1}^+$.

\begin{proof} The representation $L_{i,1}^+$ is a subquotient of $W_\infty$.
For any $k\geq 0$, we have $\tilde{\chi}(W_k) = \tilde{\chi}(V_k)$. 
By construction,
$\chi(V_\infty)$ (resp. $\chi(W_\infty)$) 
is the limit of the $\tilde{\chi}(V_k)$ 
(resp. of the $\tilde{\chi}(W_k)$).
Hence $\chi(W_\infty) = \chi(V_\infty)$ which is equal to $\chi(L_{i,1}^+)$ by Theorem \ref{plusexpl}.
The result follows. For the second point, it follows from Proposition \ref{limhphi} that 
$\tilde{\chi}_q(W_\infty) = \chi(W_\infty)$.\end{proof}

Let $W_k'\subset W_k$ be the sum of the $U_q(\gb)$-submodules of $W_k$ which
do not contain the highest weight vector. 
For any $j\in I$, $k\geq l$, we have $x_{j,0}^+ I_{k,l} = I_{k,l} x_{j,0}^+$. 
In particular we have $I_{k,l}(W_l')\subset W_k'$.
Let us consider $W_\infty' = \bigcup_{k\geq 0} I_{\infty,k}(W_k')$.

\begin{lem} We have $W_\infty' = \{0\}$.\end{lem}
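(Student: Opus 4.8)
The plan is to show that $W_\infty'$ is stable under the ``upper--triangular'' part $U_q(\mathfrak{b})^0\,U_q(\mathfrak{b})^+$ and that all of its weights lie strictly below the weight of $w_\infty$; the vanishing will then follow from the irreducibility of $W_\infty\simeq L_{i,1}^+$ proved in the theorem above, together with the triangular decomposition \eqref{triandecomp}. The pleasant feature of this route is that it completely avoids having to control the action of $e_0$ on $W_\infty'$, which would otherwise be the natural obstacle.

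First I would record the grading and stability of $W_\infty'$. Each $W_k'$ is a sum of $U_q(\gb)$-submodules, hence is $\tb$-graded and stable under the $x_{j,0}^+$ ($j\in I$). Since $x_{j,0}^+ I_{k,l}=I_{k,l}\,x_{j,0}^+$ by \eqref{Ix} and $I_{k,l}(W_l')\subset W_k'$, the increasing union $W_\infty'=\bigcup_k I_{\infty,k}(W_k')$ is $\tb$-graded and stable under every $x_{j,0}^+$. By Proposition \ref{limhphi}(i) the currents $x_{j,m}^+$ with $m>0$ act by $0$ on $W_\infty$, so $W_\infty'$ is stable under $U_q(\mathfrak{b})^+=\langle x_{j,m}^+\rangle_{j\in I,m\ge 0}$; and by Proposition \ref{limhphi}(ii) each $\phi_{j,m}^+$ acts as a scalar on each weight space, so the $\tb$-graded subspace $W_\infty'$ is also stable under $U_q(\mathfrak{b})^0=\langle \phi_{j,m}^+,k_j^{\pm1}\rangle$.

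Next I would locate the weights of $W_\infty'$. The vector $w_\infty=I_{\infty,k}(w_k)$ spans the one-dimensional top weight space of $W_\infty$. As $w_k\notin W_k'$ and $I_{\infty,k}$ is injective, we have $w_\infty\notin I_{\infty,k}(W_k')$ for every $k$, hence $w_\infty\notin W_\infty'$; since $W_\infty'$ is $\tb$-graded, every weight occurring in $W_\infty'$ is therefore strictly below the weight of $w_\infty$.

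Finally I would derive the conclusion. Suppose $W_\infty'\neq\{0\}$ and choose a nonzero $v\in W_\infty'$. Using \eqref{triandecomp} and the stability just established, $U_q(\mathfrak{b})\,v\subseteq U_q(\mathfrak{b})^-U_q(\mathfrak{b})^0U_q(\mathfrak{b})^+\,W_\infty'=U_q(\mathfrak{b})^-\,W_\infty'$. As $U_q(\mathfrak{b})^-\subset U_q(\g)^-$ has non-positive $Q$-degree, every weight of $U_q(\mathfrak{b})^-\,W_\infty'$ is of the form $\omega\,\overline{\gamma}^{-1}$ with $\omega$ a weight of $W_\infty'$ and $\gamma\in Q^+$, hence strictly below the weight of $w_\infty$; in particular $w_\infty\notin U_q(\mathfrak{b})\,v$. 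This contradicts the irreducibility of $W_\infty\simeq L_{i,1}^+$, by which $U_q(\mathfrak{b})\,v=W_\infty\ni w_\infty$. Hence $W_\infty'=\{0\}$. The only real work is the stability statement of the second paragraph --- that the $x_{j,m}^+$ ($m>0$) act by zero and the $\phi_{j,m}^+$ act diagonally, so that $U_q(\mathfrak{b})^0U_q(\mathfrak{b})^+$ preserves the $\gb$-isotypic data encoded by $W_\infty'$; once this is in place the argument is purely one about weights.
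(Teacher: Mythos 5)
Your proof is correct and follows essentially the same route as the paper's: stability of $W_\infty'$ under $U_q(\mathfrak{b})^0\,U_q(\mathfrak{b})^+$, the triangular decomposition \eqref{triandecomp}, a weight comparison showing $w_\infty\notin U_q(\mathfrak{b})W_\infty'$, and the irreducibility of $W_\infty\simeq L_{i,1}^+$ from the preceding theorem. The only cosmetic difference is that you justify stability under the $x_{j,r}^+$ with $r>0$ by their vanishing on $W_\infty$ (Proposition \ref{limhphi}(i)), whereas the paper deduces it from the scalar action of $\phi_{j,1}^+$ --- equivalent observations in this setting.
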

\begin{proof} First let us prove that $U_q(\mathfrak{b})W_\infty'$ is a submodule of $W_\infty$. 
The subspace $W_\infty'$ is stable under the action of the 
$x_{j,0}^+, k_j^{\pm 1}$ for $j\in I$. 
Since $\phi^+_{j,1}$ acts as a scalar, 
it follows that $W_\infty'$ is stable also by $x^+_{j,r}$ with $r>0$.
Hence $U_q(\mathfrak{b})^+W_\infty'\subset W_\infty'$
and we have 
\begin{align*}
U_q(\mathfrak{b})W_\infty' = U_q(\mathfrak{b})^-U_q(\mathfrak{b})^0W_\infty'.
\end{align*}
Consequently, for reasons of weights, $w_\infty \notin U_q(\mathfrak{b})W_\infty'$. 
Since $W_\infty$ is irreducible, 
we obtain the assertion.
\end{proof}
As a consequence, we get a second explicit formula for $\chi(L_{i,1}^+)$ in this case :
it is the limit of characters given by the Weyl character formula.

\begin{rem} We also get that for any $k\geq 0, a\in\CC^*$, the KR module
 $L(Y_{i,a}Y_{i,aq_i^2}\cdots Y_{i,aq_i^{2(k-1)}})$ with $\mathcal{N}_i=1$
is irreducible as a representation of $U_q(\gb)$. 
We recover the result of \cite{c1}.
\end{rem}

\subsection{Another example}

Let us look at the $\widehat{\mathfrak{sl}}_2$-case from a different angle. 
We can choose a basis $(v_0',v_1',\cdots, v_k')$ of $W_k$ such that
\begin{align*}
&x_{1,r}^+v_j'=q^{2r(k-j+1) + 2k - 4(j-1)} [j]_q [k-j+1]_q v_{j-1}'\,,
\quad 
x_{1,r}^-v_j' 
= q^{2r(k-j)+4j - 2k} v_{j+1}'\,,
\\
&\phi_1^\pm(z)v_j'= q^{k-2j}
\frac{(1-z)(1-zq^{2(k+1)})}{(1-zq^{2(k-j+1)})(1-zq^{2(k-j)})}v_j'\,.
\end{align*}
In this basis we see that $(f_1k_1^2)v_j' = v_{j+1}'$ and 
\begin{align*}
\lim_{k\to\infty}(e_1k_1^{-1})v_j'
 &=\lim_{k\to\infty} q^{k - 2j + 4} [j]_q [k-j+1]_q v_{j-1}'
\\
&=- [j]_q \frac{q^{-j+3}}{q - q^{-1}}[k-j+1]_q v_{j-1}'.
\end{align*}
Then $(e_0k_0^{-1})v_j' = q^{2(j+1) } v_{j+1}'$ is 
constant and $\tilde{\phi}_1^+(z)v_j'$ 
converges to $(1 - z)v_j'$.

Let us generalize this calculation in the next subsection.

\subsection{Second approach}

For $k\geq 0$, let $V_k$ be
as in Section \ref{asymptmoins} but 
for the parameter $q^{-1}$,
that is, $V_k$ is the 
$U_{q^{-1}}(\Glie)$-module
$L(Y_{i,(q_i^{-1})^{-1}}Y_{i,(q_i^{-1})^{-3}}\cdots Y_{i,(q_i^{-1})^{-(2k-1)}})$. 
Let $v_k$ be the highest $\ell$-weight vector of $V_k$.
Pulling back by $\sigma$, we get the $U_q(\Glie)$-module $V_k^{\sigma}$. 
We have in $V_k^\sigma$
$$
\phi_i^+(z)\,v_k = q_i^{-k} 
\frac{1 - z q_i^{2k}}{1 - z}\,v_k\,,
\quad
\phi_j^+(z)\,v_k = 1\text{ for }j\neq i\,.
$$ 
So the $\ell$-weight of $v_k$ in $V_k^{\sigma}$ is
$Y_{i,q_i}^{-1}Y_{i,q_i^3}^{-1}\cdots Y_{i,q_i^{2k-1}}^{-1} =
N_k^{-1}$. 
We define its dual module $(V_k^\sigma)^*$ as 
in Section \ref{dualcat}. By Proposition \ref{dualweight},
we have a highest $\ell$-weight vector $v_k^*$ in $(V_k^\sigma)^*$ 
of $\ell$-weight $N_k$.
In particular we can identify $(V_k^\sigma)^*$
with $W_k$ and $v_k$ with $w_k$. 

The injective morphism of $U_q(\Glie)^+$-module $F_{k,l} : V_l\rightarrow V_k$ ($1\leq l\leq k$)
gives rise to an injective linear morphism $F_{k,l}^\sigma : V_l^\sigma \rightarrow V_k^\sigma$
and to its surjective 
dual $\Pi_{l,k}=(F_{k,l}^\sigma)^* : W_k \rightarrow W_l$.
Then $(\{W_k\}, \{\Pi_{l,k}\})$ constitutes a projective 
system of vector spaces. Set
\begin{align*}
W_\infty^\Pi=\underset{\longleftarrow}{\lim}\ W_k,
\end{align*}
and let $w_\infty\in W^\Pi_\infty$ be the unique vector satisfying 
$w_k = \Pi_{k,\infty}\, w_\infty^\Pi$,
where $\Pi_{k,\infty} : W_\infty^\Pi \rightarrow W_k^\Pi$ 
denotes the surjective linear morphism 
satisfying $\Pi_{l,k}\circ \Pi_{k,\infty} = \Pi_{l,\infty}$ for $l\leq k$. 
Note that $W_\infty^\Pi$ 
can be identified with $(V_\infty^\sigma)^*$.

Now for $j\in I$ and $l\leq k$, 
we have $F_{k,l}e_j = e_j F_{k,l}$ 
and $F_{k,l} k_j = q_i^{2\delta_{i,j}(k - l)} k_j F_{k,l}$.
So $F_{k,l}^\sigma (k_j^{-1}f_j) = (k_j^{-1} f_j) F_{k,l}^\sigma$ 
and $F_{k,l}^\sigma k_j = q_i^{2\delta_{i,j}(k - l)} k_j F_{k,l}^\sigma$.
This implies 
$$
\Pi_{l,k} (f_jk_j^2) = (f_jk_j^2) \Pi_{l,k}
\quad
\text{ and }
\quad
\Pi_{l,k} k_j = q_i^{2\delta_{i,j}(k - l)} k_j \Pi_{l,k}\,.
$$

For $0\leq j\leq n$, we set $\tilde{e}_j = e_j k_j^{-1}$.

\begin{prop} 
Let $v\in W_\infty^\Pi$, $0\leq j\leq n$ and $l\geq 1$.
When $k\rightarrow \infty$,  
$(\Pi_{l,k} \tilde{e}_j \Pi_{k,\infty})(v)$ converges 
to a vector $v_{j,l}\in W_l$. 
Moreover we have $\Pi_{l,l'}(v_{j,l'}) = v_{j,l}$ for $l\leq l'$. 
\end{prop}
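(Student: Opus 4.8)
The plan is to exploit the identification $W_\infty^\Pi\cong(V_\infty^\sigma)^*$, together with $\Pi_{l,k}=(F^\sigma_{k,l})^*$ and (under this identification) $\Pi_{k,\infty}=(F^\sigma_{\infty,k})^*$, in order to transport the question back into the inductive system $\{V_k\}$ built in Section \ref{asymptminus} for the parameter $q^{-1}$, where the relevant convergence is already available. Since $V_l$ is a finite-dimensional KR module, $W_l=(V_l^\sigma)^*$ is finite-dimensional, so it suffices to show that for each fixed $\xi\in V_l^\sigma$ the scalar $\bigl(\Pi_{l,k}\tilde e_j\Pi_{k,\infty}v\bigr)(\xi)$ converges as $k\to\infty$; the limits then assemble into a functional $v_{j,l}\in W_l$.

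First I would unwind the dual pairing. Using the definition of the graded dual in Section \ref{dualcat} and the automorphism $\sigma$, one computes $S^{-1}(\tilde e_j)=S^{-1}(e_jk_j^{-1})=-q_j^2e_j$, whence
\[
\bigl(\Pi_{l,k}\tilde e_j\Pi_{k,\infty}v\bigr)(\xi)
=-q_j^2\,v\bigl(F_{\infty,k}\,\sigma(e_j)\,F_{k,l}\,\xi\bigr),
\]
the operator $\sigma(e_j)$ acting on $V_k$. The next step is to read off $\sigma(e_j)$ in Drinfeld generators: for $j\in I$ one has $\sigma(e_j)=\sigma(\tilde x^+_{j,0})=q_j^2\tilde x^-_{j,0}$, a lowering current, while for $j=0$ the element $\sigma(e_0)=\sigma(y)$ is, by \eqref{e0}, the same iterated $q$-commutator as $y$ but with every $\tilde x^-$ replaced by $\tilde x^+$.

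These two cases are then settled by results already in hand. For $j=0$ the element $\sigma(y)$ is built solely from the currents $x^+_{\cdot,r}=\tilde x^+_{\cdot,r}$, which commute with the maps $F_{k,l}$ by \eqref{Fx}; hence $F_{\infty,k}\sigma(y)F_{k,l}=F_{\infty,l}\sigma(y)$ is independent of $k$ and the scalar is literally constant. For $j\in I$, the grading-shift lemma preceding Proposition \ref{cvg2} guarantees that $\tilde x^-_{j,0}F_{k,l}\xi$ lies in the image of $F_{k,l+1}$, so that $F_{k,l+1}^{-1}\tilde x^-_{j,0}F_{k,l}\xi$ is defined and, by Proposition \ref{cvg2}, converges in $V_{l+1}$ as $k\to\infty$. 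Consequently $F_{\infty,k}\sigma(e_j)F_{k,l}\xi=q_j^2F_{\infty,l+1}\bigl(F_{k,l+1}^{-1}\tilde x^-_{j,0}F_{k,l}\xi\bigr)$ converges to a fixed vector of $V_\infty^\sigma$ lying in the finite-dimensional subspace $F_{\infty,l+1}(V_{l+1})$; applying the fixed functional $v$ yields the desired limit and defines $v_{j,l}$.

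Finally, the compatibility $\Pi_{l,l'}(v_{j,l'})=v_{j,l}$ is immediate: $\Pi_{l,l'}$ is a fixed linear map between finite-dimensional spaces, hence may be moved inside the limit, using $\Pi_{l,l'}\circ\Pi_{l',k}=\Pi_{l,k}$. The main obstacle, and the point demanding care, is precisely the grading bookkeeping in the case $j\in I$: one must confirm that $\sigma(e_j)F_{k,l}\xi$ remains in the image $F_{k,l+1}(V_{l+1})$, so that the convergent conjugates of Proposition \ref{cvg2} apply, and one must track correctly the scalar factors produced by $S^{-1}$ and $\sigma$. A self-contained alternative would be to argue directly on $W_l$ by induction on weight, commuting the lowering operators $f_{j'}k_{j'}^2$ (which commute exactly with every $\Pi$) past $\tilde e_j$; there the delicate point becomes the lowest-weight base case, and the normalization $\tilde e_j=e_jk_j^{-1}$ is exactly what keeps the resulting Cartan correction, of the shape $q_i^{4(k-l)\delta_{i,j}}k_j^2-1$, convergent when $|q|<1$.
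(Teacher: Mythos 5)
Your proposal is correct and essentially reproduces the paper's own proof: you dualize through $\Pi_{l,k}=(F_{k,l}^\sigma)^*$ and $\Pi_{k,\infty}=(F_{\infty,k}^\sigma)^*$, reduce the case $j\in I$ to the convergence of $F_{k,l+1}^{-1}\tilde{x}_{j,0}^-F_{k,l}$ from Proposition \ref{cvg2} (applied with parameter $q^{-1}$, legitimate since $|q|<1$ here), handle $j=0$ via $\sigma(y)\in\CC\langle\tilde{x}_{j,m}^+\rangle_{j\in I, m\in\ZZ}$ together with the exact commutation \eqref{Fx}, and get the compatibility $\Pi_{l,l'}(v_{j,l'})=v_{j,l}$ by moving the fixed map $\Pi_{l,l'}$ through the limit, exactly as in the paper. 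The only deviation is the scalar bookkeeping you yourself flag --- the paper computes $S^{-1}(\tilde{e}_j)=-q_j^{-2}e_j$ in $U_{q^{-1}}(\Glie)$ while you compute $-q_j^{2}e_j$ in $U_q(\Glie)$ --- but a fixed nonzero constant is immaterial to the convergence statement.
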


\begin{proof} 
First assume that 
$j\in I$, and let $v\in W_\infty^\Pi$.  
Using $S^{-1}\bigl(\tilde{e}_j\bigr) = -q_j^{-2}e_j$ 
in $U_{q^{-1}}(\mathfrak{g})$
and $\sigma(e_j) = q_j^2\tilde{x}_{j,0}^-$, 
we have for $k>l$
%\begin{align*}
%(\Pi_{l,k} \tilde{e}_j  \Pi_{k,\infty})(v) 
%&= (\tilde{e}_j \Pi_{k,\infty}(v))F_{k,l}^\sigma \\
%&= - q_j^{-2}\Pi_{k,\infty}(v)\, e_j\, F_{k,l}^\sigma\\
%&= - v F_{\infty,k}\,\tilde{x}_{j,0}^-\, F_{k,l}\,.
%\end{align*}
$$(\Pi_{l,k} \tilde{e}_j  \Pi_{k,\infty})(v) 
= (\tilde{e}_j \Pi_{k,\infty}(v))F_{k,l}^\sigma 
= - q_j^{-2}\Pi_{k,\infty}(v)\, e_j\, F_{k,l}^\sigma
= - v F_{\infty,k}\,\tilde{x}_{j,0}^-\, F_{k,l}\,.$$
The right hand side can be written as 
$- v F_{\infty,l+1} ((F_{l+1,k})^{-1}\tilde{x}_{j,0}^-\, F_{k,l})$.
This converges as $k\to\infty$ since 
$F_{k,l+1}^{-1}\tilde{x}_{j,0}^-F_{k,l}$ does.

For $k\geq l'\geq l$, we have
$$
(\Pi_{l,k} \tilde{e}_j  \Pi_{k,\infty})(v) 
= \Pi_{l,l'} ((\Pi_{l',k} \tilde{e}_j \Pi_{k,\infty})(v)).
$$
This implies the relation $\Pi_{l,l'}(v_{j,l'}) = v_{j,l}$.

We have $\sigma(S^{-1}(\tilde{e}_0)) 
= -q_0^{-2} \sigma(e_0) \in \CC\langle\tilde{x}_{j,m}^+\rangle_{j\in I ,m\in\ZZ}$.
As each 
$F_{k,l}^{-1}\tilde{x}_{j,m}^{+} F_{k,l}$ 
converges 
when $k\rightarrow +\infty$ in $\text{End}(V_l)$,
we can conclude as above.
\end{proof}

Let $\tilde{e}_j(v) \in W_\infty^\Pi$ be the projective limit of the 
$(v_{j,l})_{l\geq 1}$. We get a linear operator 
$\tilde{e}_j \in\text{End}(W_\infty^\Pi)$.
$W_\infty^\Pi$ has a natural $Q$-grading. 
We define the action of $k_j^{\pm 1}$
on $W_\infty^\Pi$ so that $w_\infty^\Pi$ has weight $1$. 
Then we get a structure of $U_q(\bo)$-module on $W_\infty^\Pi$. 

\begin{thm} $W_\infty^\Pi$ is irreducible isomorphic to $L_{i,1}^+$.
\end{thm}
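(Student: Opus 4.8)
The plan is to avoid any fresh computation and instead reduce the statement to Theorem \ref{plusexpl} by identifying $W_\infty^\Pi$, as a $U_q(\bo)$-module, with the graded dual $(V_\infty^\sigma)^*$ studied there. As graded vector spaces the identification is essentially built into the construction: since $W_k=(V_k^\sigma)^*$ and the structure maps $\Pi_{l,k}=(F_{k,l}^\sigma)^*$ are the transposes of the injections $F_{k,l}^\sigma:V_l^\sigma\hookrightarrow V_k^\sigma$, the projective limit of the $W_k$ is the graded dual of the inductive limit $V_\infty^\sigma=\varinjlim V_k^\sigma$. First I would make this precise weight space by weight space: in a fixed weight $\omega$ one has $(V_k^\sigma)^*_\omega=\bigl((V_k^\sigma)_{\omega^{-1}}\bigr)^*$, and these spaces stabilize as $k\to\infty$ because the weight spaces of $V_\infty$ are finite dimensional and stabilize. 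Hence $\varprojlim W_k$ and $(V_\infty^\sigma)^*$ coincide on each graded component, and therefore as graded vector spaces; under this identification $w_\infty^\Pi$ corresponds to the vector $v_\infty^*$ of Section \ref{asymptplus}, both of weight $1$.

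Next I would verify that the $U_q(\bo)$-action defined on $W_\infty^\Pi$ agrees with the dual action on $(V_\infty^\sigma)^*$ from Section \ref{dualcat}. The $k_j^{\pm1}$ match by construction, since both are fixed by the $Q$-grading together with the normalization $w_\infty^\Pi=v_\infty^*$ of weight $1$. For the $e_j$ it suffices to treat $\tilde{e}_j=e_jk_j^{-1}$, because $e_j=\tilde{e}_jk_j$. By the computation in the proposition immediately preceding this theorem, the operator $\tilde{e}_j$ on $W_\infty^\Pi$ is realized as the $k\to\infty$ limit of the transposes of $F_{\infty,k}\,\tilde{x}_{j,0}^-\,F_{k,l}$ (for $j\in I$, and of the corresponding iterated $q$-commutator of $\tilde{x}^+$'s for $j=0$), the scalar $-q_j^{-2}$ coming from $S^{-1}(\tilde{e}_j)=-q_j^{-2}e_j$ in $U_{q^{-1}}(\Glie)$ and the factor $q_j^2$ from $\sigma(e_j)=q_j^2\tilde{x}_{j,0}^-$. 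On the other hand, the dual action on $(V_\infty^\sigma)^*$ is $(\tilde{e}_j\cdot u)(v)=u\bigl(S^{-1}(\tilde{e}_j)v\bigr)$, so the transpose of the action of $\tilde{e}_j$ on $V_\infty^\sigma$ is precisely the limiting operator that defines $\tilde{e}_j$ on $W_\infty^\Pi$. I would spell this out by pairing against $F_{\infty,k}w$ for $w\in V_k$ and invoking the compatibility $F_{k,l}^\sigma$ versus $\Pi_{l,k}$.

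Having matched generators, I would conclude that $W_\infty^\Pi\cong(V_\infty^\sigma)^*$ as $U_q(\bo)$-modules, whence Theorem \ref{plusexpl} yields that $W_\infty^\Pi$ is irreducible and isomorphic to $L_{i,1}^+$. I expect the main obstacle to be exactly the bookkeeping of the second paragraph: one must check that the duality between the inductive system $(\{V_k^\sigma\},\{F_{k,l}^\sigma\})$ and the projective system $(\{W_k\},\{\Pi_{l,k}\})$ intertwines the limit defining $\tilde{e}_j$ on $W_\infty^\Pi$ with the transpose of $\tilde{e}_j$ on $V_\infty^\sigma$, carefully tracking the powers of $q_j$ introduced by $S^{-1}$, by $\sigma$, and by the grading shifts built into the $H_{k,l}$ and $F_{k,l}$. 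Once these normalizations are aligned the argument is routine, and no representation-theoretic input beyond Sections \ref{asymptplus} and \ref{irred} is needed.
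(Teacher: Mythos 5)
Your proposal is correct, but it reaches the theorem by a genuinely different mechanism than the paper. The paper never matches the actions at the operator level: it observes that each $v_k^*$ has $\ell$-weight $N_k$, so that (after the grading normalization) $w_\infty^\Pi$ is a highest $\ell$-weight vector with the highest $\ell$-weight of $L_{i,1}^+$, making $L_{i,1}^+$ a subquotient of $W_\infty^\Pi$; it then notes $\chi(W_\infty^\Pi)=\chi(V_\infty)$ by construction, and since Theorem \ref{plusexpl} gives $\chi(V_\infty)=\chi(L_{i,1}^-)=\chi(L_{i,1}^+)$, the equality of characters forces irreducibility. You instead upgrade the vector-space identification $W_\infty^\Pi\simeq (V_\infty^\sigma)^*$ (which the paper records only in passing) to an isomorphism of $U_q(\bo)$-modules and then invoke the first assertion of Theorem \ref{plusexpl} directly. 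This works, and the bookkeeping you flag as the main obstacle is essentially already done in the paper: the proof of the proposition immediately preceding the theorem establishes $(\Pi_{l,k}\,\tilde e_j\,\Pi_{k,\infty})(v)=-v\,F_{\infty,k}\,\tilde x_{j,0}^-\,F_{k,l}$ for $j\in I$ (and the analogue for $j=0$ via $\sigma(S^{-1}(\tilde e_0))\in \C\langle \tilde x_{j,m}^+\rangle_{j\in I, m\in\ZZ}$), and since the asymptotic action of $\tilde x_{j,0}^-$ on $V_\infty$ is by definition the limit of $F_{k,l+1}^{-1}\tilde x_{j,0}^- F_{k,l}$, the limits defining $\tilde e_j$ on $W_\infty^\Pi$ are exactly the transposes of $S^{-1}(\tilde e_j)$ acting on $V_\infty^\sigma$, i.e.\ the dual action of Section \ref{dualcat}; the $k_j$-actions match because both are normalized so that the degree-zero vector has weight $1$, and $e_j=\tilde e_j k_j$ then matches too. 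Two points merit the care you give them: the unrestricted projective limit $\varprojlim W_k$ is the \emph{full} dual of $V_\infty^\sigma$, so the identification with the graded dual must indeed be made degreewise using the stabilization of weight spaces, exactly as you propose (the paper tacitly does the same when asserting $\chi(W_\infty^\Pi)=\chi(V_\infty)$); and your isomorphism is only as robust as the scalar normalizations coming from $S^{-1}$, $\sigma$ and the grading shifts---any mismatch there would have to be absorbed by a harmless rescaling $e_j\mapsto c_j e_j$ of the Borel generators, whereas the paper's character argument is completely insensitive to such normalizations. What your route buys is a stronger conclusion, namely the on-the-nose identification of $W_\infty^\Pi$ with $(V_\infty^\sigma)^*$, explaining why the projective-limit construction is literally dual to the inductive-limit construction of Section \ref{asymptminus}; what the paper's route buys is brevity and independence of normalization. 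Both proofs rest equally on Theorem \ref{plusexpl}.
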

\begin{proof} We have seen that each $v_k$ has $\ell$-weight
$N_k$. So the $\ell$-weight of 
$W_\infty^\Pi$ 
is the highest $\ell$-weight of $L_{i,1}^+$. In particular
$L_{i,1}^+$ is a subquotient of $W_\infty^\Pi$.
By construction, we have $\chi(W_\infty^\Pi) = \chi(V_\infty)$.
Now the result follows from Theorem \ref{plusexpl}.
\end{proof}

\bigskip

\noindent

{\it Acknowledgments.}\quad
\medskip

Research of MJ is supported by the Grant-in-Aid for Scientific 
Research B-23340039. 
Research of DH is supported partially by ANR through Projects
``G\'esaq" and ``Q-diff".

We wish to thank the organizers of the workshops
``Algebraic Lie Structures with Origins in Physics''
and 
``Discrete Integrable Systems'' 
at the Isaac Newton Institute in March 2009  
for giving us the opportunity 
for discussions, from which the present work was started.

DH thanks Vyjayanthi Chari for useful discussions and references.

\end{document}